\pgfplotsset{compat = newest}
\tikzset{anchorbase/.style={baseline={([yshift=-0.5ex]current bounding box.center)}}}
\tikzset{
    partial ellipse/.style args={#1:#2:#3}{
        insert path={+ (#1:#3) arc (#1:#2:#3)}
    }
}
\tikzstyle directed=[postaction={decorate,decoration={markings,
    mark=at position #1 with {\arrow{>}}}}]
\tikzstyle rdirected=[postaction={decorate,decoration={markings,
    mark=at position #1 with {\arrow{<}}}}]
\definecolor{UCDblue}{cmyk}{1,0.56,0,0.34}
\definecolor{UCDgold}{cmyk}{0,0.19,1,0.15}
\DeclareMathOperator{\JM}{JM}
\newcommand{\Z}{\mathbb{Z}}
\newcommand{\C}{\mathbb{C}}
\newcommand{\PP}{\mathbb{P}}
\newcommand{\x}{{\sf x}}
\newcommand{\cJ}{\mathcal{J}}
\newcommand{\cO}{\mathcal{O}}
\newcommand{\cK}{\mathcal{K}}
\newcommand{\diag}{\mathrm{diag}}
\newcommand{\Proj}{\mathrm{Proj}}
\newcommand{\Sp}{\mathrm{Sp}}
\newcommand{\sq}{\square}
\newcommand{\MVcomment}[1]{} 
\newtheorem{thm}{Theorem}[section]
\newtheorem{cor}[thm]{Corollary}
\newtheorem{prop}[thm]{Proposition}
\newtheorem{lemma}[thm]{Lemma}
\newtheorem{conj}[thm]{Conjecture}
\theoremstyle{definition}
\newtheorem{fact}[thm]{Fact}
\newtheorem{defn}[thm]{Definition}
\newtheorem{example}[thm]{Example}
\theoremstyle{remark}
\newtheorem{rem}[thm]{Remark}
\title{Affine Springer Fibers and Generalized Haiman Ideals}
\author{Joshua P. Turner\\ with an appendix by Eugene Gorsky and Joshua P. Turner}
\address{Department of Mathematics
\\
University of California, Davis\\
Davis, CA 95616, USA and Department of Mathematics
\\
University of California, Davis\\
Davis, CA 95616, USA}
\email{jpturner@ucdavis.edu, egorskiy@math.ucdavis.edu}
\newcommand{\Gr}{\mathrm{Gr}}
\newcommand{\Gln}{\mathrm{GL}_n}
\newcommand{\Sln}{\mathrm{SL}_n}
\newcommand{\sln}{\mathfrak{sl}_n}
\newcommand{\gln}{\mathfrak{gl}_n}
\newcommand{\HHH}{\mathrm{HHH}}
\newcommand{\HY}{\mathrm{HY}}
\newcommand{\FT}{\mathrm{FT}}
\newcommand{\CP}{\mathcal{P}}
\newcommand{\ee}{\mathbf{e}}
\newcommand{\area}{\mathrm{area}}
\newcommand{\dinv}{\mathrm{dinv}}
\begin{document}

\begin{abstract}
    We compute the Borel-Moore homology of unramified affine Springer fibers for $\Gln$ under the assumption that they are equivariantly formal and relate them to certain ideals discussed by Haiman. For $n=3$, we give an explicit description of these ideals, compute their Hilbert series, generators and relations, and compare them to generalized $(q,t)$-Catalan numbers. We also compare the homology to the Khovanov-Rozansky homology of the associated link, and prove a version of a conjecture of Oblomkov, Rasmussen, and Shende in this case.
\end{abstract}

\maketitle

\tableofcontents

\section{Introduction}

In this paper, we study a broad class of affine Springer fibers and their equivariant Borel-Moore homology, using geometric and algebraic techniques. This homology is related to the Langlands program \cite{Langlands}, Hilbert schemes of singular curves \cite{MY,MS}, and is conjectured to be related to knot homology \cite{ORS}. Sometimes these relationships are discussed in terms of compactified Jacobians, which are equivalent to affine Springer fibers \cite{Laumon}.

Given a matrix $\gamma \in \gln((t))$ (or $\sln((t))$), the affine Springer fiber $\Sp_{\gamma}$ is a certain ind-subvariety of the affine Grassmanian, see Definition \ref{def: ASF}.

The connection to knot homology is of particular interest. Given $\gamma$, its characteristic polynomial $p(\lambda, t) = \text{det}(\gamma - \lambda I)$  defines a singular curve $C_\gamma$ in $\C^2$, called the spectral curve of $\gamma$. As long as $C_\gamma$ is reduced, $\Sp_{\gamma}$ depends only on its spectral curve $C_\gamma$, in particular on its completion at the origin $(C_\gamma,0)$. In this paper we will work with $\gamma$ with distinct eigenvalues so that $C_\gamma$ will be reduced.

\begin{fact}[\cite{Yun}]
    If $(C_\gamma,0)$ is irreducible, then $\Sp_{\gamma}$ is a projective variety, but if $(C_\gamma,0)$ is not irreducible, $\Sp_{\gamma}$ is an ind-variety with infinitely many irreducible components.
\end{fact}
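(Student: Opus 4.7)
The plan is to analyze $\Sp_\gamma$ via the translation action of the centralizer of $\gamma$ in the loop group $\Gln((t))$, and to identify a quotient of $\Sp_\gamma$ with a compactified Jacobian. Since $\gamma$ has distinct eigenvalues, its centralizer in $\Gln((t))$ is a product $\prod_{i=1}^{r} K_i^{\times}$ indexed by the analytic branches of $(C,0)$, where each $K_i$ is the fraction field of the completed local ring along a branch. Dividing by the maximal bounded subgroup $\prod_i \cO_{K_i}^{\times}$, I obtain a lattice $\Lambda \cong \Z^r$ acting freely on $\Sp_\gamma$ by translation on the affine Grassmannian.

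My first step would be to cite the Kazhdan--Lusztig theorem that $\Sp_\gamma$ is a finite-dimensional ind-subscheme of the affine Grassmannian for any regular semisimple $\gamma$. I would then, following Laumon, identify the quotient $\Sp_\gamma/\Lambda$ with the compactified Jacobian $\overline{J}(C,0)$, which is projective. If $r = 1$, the lattice $\Lambda \cong \Z$ is generated by a translation that shifts the degree by a fixed amount and so permutes the connected components of $\Sp_\gamma$ freely and transitively; each connected component is therefore isomorphic to $\overline{J}(C,0)$ and projective, giving the first assertion. If $r \ge 2$, the degree map cuts out only a rank one sublattice of $\Lambda$ responsible for shifts between connected components, leaving a complementary $\Z^{r-1} \subset \Lambda$ that acts freely on each connected component.

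The main obstacle is upgrading the free $\Z^{r-1}$-action on points to a free action on the \emph{set} of irreducible components, which is what produces infinitely many irreducible components in a single connected component of $\Sp_\gamma$. I would handle this by noting that $\overline{J}(C,0)$ has only finitely many irreducible components, so if any nontrivial element of $\Z^{r-1}$ stabilized an irreducible component of $\Sp_\gamma$ setwise, the corresponding covering map to $\overline{J}(C,0)$ restricted to that component would have finite degree, contradicting the freeness of the $\Z^{r-1}$-action on points. It follows that the irreducible components of a fixed connected component of $\Sp_\gamma$ form a $\Z^{r-1}$-torsor over those of $\overline{J}(C,0)$, and hence there are infinitely many of them.
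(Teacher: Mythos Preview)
The paper does not prove this statement; it is recorded as a Fact with a citation to Yun's lecture notes and is used only as background. There is therefore no proof in the paper to compare your attempt against.

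That said, your outline is the standard argument and is essentially correct: Kazhdan--Lusztig gives finite-dimensionality, Laumon identifies $\Sp_\gamma/\Lambda$ with the compactified Jacobian, and the dichotomy on $r$ follows. One step could be made more explicit. You assert that if a nontrivial $g\in\Z^{r-1}$ stabilizes an irreducible component $Y$ setwise, then the map $Y\to\overline{J}(C,0)$ ``would have finite degree.'' The reason this holds is that you are arguing by contradiction from the assumption that $\Sp_\gamma$ has only finitely many irreducible components, so $Y$ is of finite type; since the quotient map $\Sp_\gamma\to\overline{J}(C,0)$ is an \'etale $\Lambda$-cover, its fibers are zero-dimensional, and a zero-dimensional closed subscheme of a Noetherian scheme is finite. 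This finiteness of fibers is what contradicts the fact that each fiber of $Y$ over its image contains the infinite orbit $\langle g\rangle\cdot y$. Without the finite-type hypothesis on $Y$ (coming from the contradiction setup) the \'etale map alone does not force finite fibers, so it is worth saying this explicitly.
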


Intersecting the $C_\gamma$ with a small sphere around the origin (where $C_\gamma$ is often singular) gives a link $L_{\gamma}$ in $S^3$. Each irreducible component of $(C_\gamma,0)$ corresponds to a component of the link, and the intersection numbers of irreducible components are the linking numbers of the corresponding link components. Any smooth components of the curve correspond to unknots. Oblomkov, Rasmussen, and Shende \cite{ORS} have conjectured that the homology of $\Sp_{\gamma}$ is closely related to the triply-graded Khovanov-Rozansky homology \cite{KR2,KhSoergel} (also called $\mathrm{HHH}$) of $L_\gamma$. This relationship has previously been shown for all torus knots, and for $(n, nd)$-torus links by Kivinen in \cite{Kivinen}.

\begin{conj}[ORS \cite{ORS}]
    \label{conj: ORS}
    If $L_{\gamma}$ is the link associated to $\gamma$, we have
    $$\mathrm{gr}_P H_*(\Sp_{\gamma}) \otimes \C[x] \cong \mathrm{HHH}^{a=0}(L_{\gamma})$$
    where $\mathrm{gr}_P$ is a certain perverse filtration on $H_*(\Sp_{\gamma})$.
\end{conj}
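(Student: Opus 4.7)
The plan is to prove the unramified, equivariantly formal version of Conjecture~\ref{conj: ORS}, with the case $n=3$ worked out explicitly. The strategy is to introduce an algebraic intermediary, the generalized Haiman ideal $I_\gamma$ of the abstract, through which both sides of the desired isomorphism can be identified.

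First, I would compute the left-hand side by torus localization. Let $T$ denote the torus acting on $\Sp_\gamma$ by loop rotation and by the centralizer of $\gamma$; for unramified $\gamma$ with distinct eigenvalues, the $T$-fixed points on $\Sp_\gamma$ are discrete and indexed by tuples of lattices attached to the branches of the spectral curve. Under the equivariant formality hypothesis, the ordinary Borel--Moore homology $H_*(\Sp_\gamma)$ is recovered from the equivariant cohomology of the fixed-point set via GKM-type localization, together with the tangent weights at each fixed point. The perverse filtration $\mathrm{gr}_P$ is then captured by the combinatorics of the Hitchin-type (or ``parabolic'') map on this fixed-point data.

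Second, I would identify the resulting presentation of $H^T_*(\Sp_\gamma)$ with an explicit quotient $R/I_\gamma$, where $I_\gamma$ is the generalized Haiman ideal cut out by the tangent-weight relations at the fixed points. For $n=3$ the ideal is small enough to write down by hand, and I would record its generators and relations, compute its bigraded Hilbert series, and compare with the generalized $(q,t)$-Catalan number predicted for $L_\gamma$; this converts the geometric computation into a combinatorial identity.

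Third, I would match $R/I_\gamma \otimes \C[x]$ with $\HHH^{a=0}(L_\gamma)$. For unramified $\gamma$ the link $L_\gamma$ consists of smooth branches whose pairwise linking numbers are determined by the multiplicities of the factors of the spectral curve, so $\HHH^{a=0}(L_\gamma)$ can be computed using existing Soergel-bimodule or Gorsky--Negut--Rasmussen-style formulas, and for $n=3$ one can just tabulate the finitely many link types. The main obstacle will be to identify the perverse filtration on the geometric side with the $q$-grading on the Khovanov--Rozansky side: while the dimensions ought to agree by comparison with $(q,t)$-Catalan numbers, promoting this to a filtered isomorphism requires locating both gradings inside the Haiman model. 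I expect that for $n=3$ this can be verified by hand from the explicit generators and relations of $I_\gamma$, and that the general unramified statement reduces to a compatibility check of two natural bigradings on $R/I_\gamma$ which are transparent on the $T$-fixed locus.
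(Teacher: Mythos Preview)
The statement you are attempting is a \emph{conjecture}; the paper does not prove it, and explicitly says so. What the paper establishes is the weaker Theorem~\ref{thm: ORS} for $n=3$, namely
\[
\HHH^{a=0}(L_{\gamma})\otimes_{\C[\mathbf{x}]}\C[\mathbf{x}^{\pm}] \cong H_*(\Sp_{\gamma}),
\]
with no claim about the perverse filtration. Your outline is therefore aiming at strictly more than the paper achieves, and the part where you propose to ``identify the perverse filtration on the geometric side with the $q$-grading on the Khovanov--Rozansky side'' is precisely the step the paper leaves open. Your expectation that this can be ``verified by hand from the explicit generators and relations'' is not substantiated; the paper has those generators and relations in full for $n=3$ and still does not carry out this step.

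On the overlap with what the paper does prove, your broad strategy (localize, identify with an ideal, compare with $\HHH$) matches, but several details diverge. First, the torus the paper uses is only the diagonal centralizer; loop rotation is not part of the action, and bringing it in changes the fixed locus. Second, the equivariant homology is realized not as a quotient $R/I_\gamma$ but, after multiplying by $\Delta$, as the \emph{ideal} $\cJ=\bigcap_{i\neq j}(t_i-t_j,x_i-x_j)^{d_{ij}}$ inside $\C[\mathbf{t},\mathbf{x}^{\pm}]$; this is obtained by restricting to codimension-one subtori, reducing to the known $\mathrm{GL}_2$ case, and reassembling via Hartogs. Third, the link side is not handled by a general Gorsky--Negu\cb{t}--Rasmussen formula but by the Elias--Hogancamp recursion, which shows $\HHH^{a=0}(\beta_{d_1,d_2})$ is parity; then a theorem of Gorsky--Hogancamp identifies $\HY^{a=0}$ directly with the ideal $J'(d_1,d_2)$. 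The comparison with $(q,t)$-Catalan numbers is a separate result (Theorem~\ref{thm: Hilbert}) and is not the mechanism by which the link homology is matched with $H_*(\Sp_\gamma)$.
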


\begin{example}
    For the matrix
$$\gamma = \begin{pmatrix}
0 & t^2\\
t & 0
\end{pmatrix},$$
the characteristic polynomial is given by $p(\lambda, t) = \lambda^2-t^3.$ The associated link to this curve in $\C^2$ is a trefoil. The affine Springer fiber $\Sp_\gamma$ is isomorphic to $\PP^1$, and the reduced HHH homology of the trefoil is isomorphic to $H_*(\PP^1)$.

\end{example} 

In this paper, we will calculate the homology for a large class of affine Springer fibers with $\gamma$ of the form
\begin{equation}
\label{gamma def}
    \gamma = \begin{pmatrix}
         z_1t^{d_1} &  & 0\\
         & \ddots & \\
        0 & & z_nt^{d_n}
        \end{pmatrix},
\end{equation}
with $z_i \in \C^*$ pairwise distinct and $d_i > 0$, under the assumption that $\Sp_\gamma$ is equivariantly formal (see Definition \ref{def: eq formal}).

The characteristic polynomial of this $\gamma$ is $\prod_i (\lambda - z_i t^{d_i})$. The corresponding curve $C_\gamma$ has $n$ smooth irreducible components with pairwise intersection numbers $d_{ij} = \min(d_i, d_j)$. So the corresponding link $L_{\gamma}$ is a link of $n$ unknots with pairwise linking numbers $d_{ij}$. In Section \ref{sec: recursion}, we calculate $\HHH$ of this link $L_\gamma$ for $n=3$ and compare the result to $H_*(\Sp_\gamma)$. 

\subsection{Homology of Affine Springer Fibers}
\label{sec: intro ASF}

In Section \ref{sec: ASF}, we focus on computing the equivariant Borel-Moore homology $H_*^T(\Sp_\gamma)$ with respect to the natural torus action of $T = (\C^*)^n$ on $\Sp_{\gamma}$ (explained in Section \ref{sec: background}). Given the assumption that $\Sp_\gamma$ is equivariantly formal with respect to $T$, we can recover the ordinary Borel-Moore homology of $\Sp_\gamma$ by quotienting out by the equivariant parameters, see Fact \ref{fact: mod y}.

In order to calculate $H_*^T(\Sp_{\gamma})$, we view it as a module over 
$$R = H^*_T(pt) \otimes \C[\Lambda] \cong \C[t_1, \dots, t_n, x_1^{\pm}, \dots, x_n^{\pm}].$$
Here, the $t_i$'s are our equivariant parameters, and the $x_i$'s parametrize the integer lattice $\Lambda$ which acts on $\Sp_{\gamma}$ by translations.

In \cite{GKM unramified}, Goresky, Kottwitz, and MacPherson (GKM) conjectured that $\Sp_{\gamma}$ is pure for all unramified (i.e. diagonal) $\gamma$. The following is a more narrow conjecture that is what we will rely on in this paper.

\begin{conj}[\cite{GKM unramified}]
    For $\gamma$ as above, $\Sp_{\gamma}$ is equivariantly formal, as defined in Definition \ref{def: eq formal}.
\end{conj}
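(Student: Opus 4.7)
The plan is to establish equivariant formality by exhibiting a $T$-stable paving of $\Sp_\gamma$ whose strata have cohomology concentrated in even degrees. For $\gamma$ of the given diagonal form, the $T$-fixed points on $\Sp_\gamma$ are isolated: a $T$-stable lattice in $\bigoplus_i \C((t)) e_i$ must decompose as $\bigoplus_i t^{m_i} \C[[t]] e_i$, and the condition $\gamma L \subset L$ is automatic because $d_i \geq 0$, so the fixed locus is naturally in bijection with $\Z^n$, with the translation lattice $\Lambda$ acting by shifting the tuples $(m_i)$.

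The natural strategy is a Bialynicki--Birula decomposition using a generic one-parameter subgroup $\mu : \C^\times \to T$, possibly augmented by loop rotation. One would identify the Zariski tangent space $T_x \Sp_\gamma$ at each fixed point $x$ with a sum of weight spaces for $\mu$ (computed from the centralizer of $\gamma$ modulo the stabilizer Lie algebra), and show that the attracting set of $x$ is an affine cell of dimension equal to the number of negative weights. Because the fixed-point set is infinite and $\Sp_\gamma$ is an ind-variety---with infinitely many components whenever the spectral curve is reducible, by Yun's fact---this cell decomposition must be organized inside a $\mu$-stable finite-dimensional exhaustion, for instance the intersections of $\Sp_\gamma$ with bounded Schubert varieties in $\Gr$. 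One can then use the $\Lambda$-action to reduce equivariant formality of $\Sp_\gamma$ to that of the quotient $\Sp_\gamma / \Lambda$, which has only finitely many $T$-fixed orbits.

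The main obstacle is showing that the attracting sets are genuinely affine spaces, not merely locally closed strata with even cohomology; this requires a precise local model for $\Sp_\gamma$ near each fixed point. In the diagonal case this should be tractable using the product structure coming from the eigenspace decomposition of $\gamma$ and the pairwise linking numbers $d_{ij} = \min(d_i, d_j)$, which control how the one-dimensional pieces glue. An alternative would be to verify purity of $H^*_c(\Sp_\gamma)$ directly via Deligne's weight filtration, from which equivariant formality follows by a standard Leray--Hirsch comparison; however, vanishing of odd weights in this generality appears no easier than constructing the paving explicitly, which is presumably why the conjecture remains open beyond the torus-link cases handled by Kivinen and others.
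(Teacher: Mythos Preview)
This statement is recorded in the paper as a \emph{conjecture}, not a theorem, and the paper does not supply a proof. Immediately after stating it, the author writes ``Assuming that $\Sp_{\gamma}$ is equivariantly formal, we can calculate its equivariant Borel-Moore homology,'' and Theorem~\ref{thm: J} is explicitly conditioned on this hypothesis. The paper notes that equivariant formality is known for $n\le 4$ (by \cite{n=3}, \cite{n=4}) and for the equal-valuation case $d_1=\cdots=d_n$ (by \cite{GKM purity}), but remains open in general; the conjecture is restated again later in the paper as still open.

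Your proposal is therefore not competing with any argument in the paper---there is nothing to compare it to. As a strategy it is the standard one (Bialynicki--Birula decomposition, reduction via the $\Lambda$-action to a compact quotient, affine paving giving even cohomology), and you have correctly located the genuine obstruction: proving that the attracting cells are affine spaces, or equivalently that the paving is by affines. Your final paragraph essentially concedes that the proposal does not overcome this obstruction and that this is exactly why the statement is still conjectural. So what you have written is an accurate description of the known approach and its sticking point, but it is not a proof, and you should not present it as one. If your goal is to contribute beyond the paper, the concrete task would be to carry out the paving for some new case (e.g.\ $n=5$, or a class of unequal $d_i$ beyond the equal-valuation case), since the general argument you outline is precisely what Goresky--Kottwitz--MacPherson and Chen were unable to complete.
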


Assuming that $\Sp_{\gamma}$ is equivariantly formal, we can calculate its equivariant Borel-Moore homology.
\begin{thm}
\label{thm: intro J}   
    Consider $\gamma$ as in (\ref{gamma def}). Define the ideal
        
        $$\mathcal{J} = \mathcal{J}_{\gamma} = \bigcap_{i < j} (t_i-t_j, x_i - x_j)^{d_{ij}} \subseteq R$$

    with $d_{ij} = \text{min}(d_i, d_j)$. If $\Sp_{\gamma}$ is equivariantly formal, then as $R$-modules,
        
        $$\Delta H_*^T(\Sp_{\gamma}) \cong \cJ\quad \mathrm{where}\quad
        \Delta = \prod_{i < j} (t_i-t_j)^{d_{ij}}.$$
        
        \end{thm}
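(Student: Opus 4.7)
The plan is to apply a Goresky--Kottwitz--MacPherson style localization argument: equivariant formality embeds $H^T_*(\Sp_\gamma)$ into the equivariant homology of the fixed point set, and the image is cut out by congruence conditions coming from the 1-dimensional $T$-orbits inside $\Sp_\gamma$.

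First, I would identify the $T$-fixed locus and its equivariant homology. The $T$-fixed points of the affine Grassmannian are parametrized by $\Lambda = \Z^n$, with $\lambda$ corresponding to the lattice $L_\lambda = \bigoplus_i t^{\lambda_i}\cO e_i$. Because $\gamma$ is diagonal and $d_i \geq 0$, one checks directly that $\gamma L_\lambda \subseteq L_\lambda$ for every $\lambda$, hence $(\Sp_\gamma)^T = \Lambda$. The $\Lambda$-translation action on $(\Sp_\gamma)^T$ is free and transitive, so $H^T_*((\Sp_\gamma)^T) \cong R$ as $R$-modules, with $x^\lambda$ corresponding to the class of $L_\lambda$.

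Second, I would enumerate the 1-dimensional $T$-orbits in $\Sp_\gamma$. In the affine Grassmannian they form families indexed by $i \neq j$ and $k > 0$: a $\PP^1$ connecting $L_\lambda$ and $L_{\lambda + k(e_i - e_j)}$ with $T$-weight $t_i - t_j$. The Springer condition $\gamma L \subseteq L$ along such a $\PP^1$ is satisfied precisely when $k \leq d_{ij}$. By equivariant formality, each such $\PP^1$ contributes a GKM congruence $f_\lambda \equiv f_{\lambda + k(e_i - e_j)}$ modulo an appropriate weight. Repackaging these as conditions on $f = \sum_\lambda f_\lambda x^\lambda \in R$ and combining over $k = 1, \ldots, d_{ij}$ yields, after clearing the equivariant Euler classes at the fixed points (which is precisely the role of $\Delta$), the requirement that $\Delta \cdot f$ lie in $(t_i - t_j, x_i - x_j)^{d_{ij}}$. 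Intersecting over all ordered pairs $(i,j)$ then gives $\Delta \cdot H^T_*(\Sp_\gamma) = \bigcap_{i \neq j}(t_i - t_j, x_i - x_j)^{d_{ij}} = \mathcal{J}$.

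The hard part is the combinatorial algebra in the second step: showing that the $d_{ij}$ congruences with $k = 1, \ldots, d_{ij}$, all nominally carrying the \emph{same} finite-torus weight $t_i - t_j$, combine into the $d_{ij}$-th power of $(t_i - t_j, x_i - x_j)$ rather than merely the first power. A naive GKM reading would collapse all these $k$-values to the single condition $f \in (t_i - t_j)R$. Achieving the sharper $d_{ij}$-th power condition likely requires either invoking loop-rotation equivariance (so the orbit at shift $k$ carries a weight of the form $t_i - t_j - k\delta$, producing genuinely distinct congruences which combine multiplicatively before $\delta$ is specialized to $0$) or carrying out a careful equivariant-multiplicity analysis at the fixed points of the singular variety $\Sp_\gamma$. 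Tracking the normalization factor $\Delta = \prod_{i\neq j}(t_i - t_j)^{d_{ij}}$ requires bookkeeping of these equivariant multiplicities and how they interact with the identification $H^T_*((\Sp_\gamma)^T) \cong R$.
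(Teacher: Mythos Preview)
Your overall localization strategy is sound, and you have correctly identified both the fixed-point set and the one-dimensional $T$-orbits in $\Sp_\gamma$. You have also put your finger on exactly the right obstacle: under the finite torus $T=(\C^*)^n$ alone, every $\PP^1$ connecting $L_\lambda$ to $L_{\lambda+k(e_i-e_j)}$ for $1\le k\le d_{ij}$ carries the \emph{same} character $t_i-t_j$, so naive GKM only yields a first-power congruence. Your proposed repair via loop rotation is plausible in spirit --- indeed this is essentially how Kivinen handles the equal-valued case --- but note that it silently strengthens the hypothesis: you would need equivariant formality for $T\times\C^*_{\mathrm{loop}}$, not just for $T$, and you would need to justify why specializing the loop parameter to zero recovers $H^T_*(\Sp_\gamma)$ on the nose. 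As written, this step is a genuine gap.

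The paper sidesteps this difficulty entirely by a different reduction. Rather than working with one-dimensional orbits, it localizes to the fixed locus of each codimension-one subtorus $T'=\{t_i=t_j\}$. One checks that $\Sp_\gamma^{T'}\cong \Sp_{\tilde\beta_{ij}}\times\Z^{n-2}$, where $\tilde\beta_{ij}\in\mathfrak{gl}_2$ is the equal-valued matrix $\mathrm{diag}(z_it^{d_{ij}},z_jt^{d_{ij}})$. Kivinen's result for the equal-valued rank-two case then gives $(t_i-t_j)^{d_{ij}}H_*^{T/T'}(\Sp_{\tilde\beta_{ij}})\cong(t_i-t_j,x_i-x_j)^{d_{ij}}$ directly --- the hard combinatorial work producing the $d_{ij}$-th power is packaged inside that citation. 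Brion's localization lemma shows that $\Delta H_*^T(\Sp_\gamma)$ agrees with this ideal after localizing away from $(t_i-t_j)$, and since equivariant formality makes $H_*^T(\Sp_\gamma)$ free over $H_T^*(\mathrm{pt})$, the algebraic Hartogs lemma lets one intersect over all codimension-one primes $(t_i-t_j)$ to recover $\mathcal{J}$ globally. This route uses only $T$-equivariant formality, exactly as stated in the hypothesis, and trades the delicate orbit-weight bookkeeping you anticipated for a clean modular appeal to the $n=2$ case.
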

These $\cJ$ ideals are slightly more general than the similar ideals considered by Haiman in his work on the Hilbert schemes of points \cite{Haiman}, so we refer to them as generalized Haiman ideals. Since $\Delta$ is a polynomial, multiplication by $\Delta$ is injective, and 
$$H_*^T(\Sp_{\gamma}) \cong \Delta H_*^T(\Sp_{\gamma}) \cong \cJ$$
as modules over $R = \C[t_1, \dots, t_n, \x_1^{\pm}, \dots, x_n^{\pm}]$. It is still useful to keep track of $\Delta$ if we want to retain the localization information of $H_*^T(\Sp_{\gamma})$, but we can omit $\Delta$ when we only care about $H_*^T(\Sp_{\gamma})$ as an $R$-module. Given the assumption that $\Sp_{\gamma}$ is equivariantly formal, we can recover the ordinary Borel-Moore homology of $\Sp_{\gamma}$ as well.

\begin{cor}
    \label{cor: intro J/yJ}
    For $\gamma$ as in \eqref{gamma def}, if $\Sp_{\gamma}$ is equivariantly formal, then
    $$H_*(\Sp_{\gamma}) \cong \cJ/(\mathbf{t})\cJ.$$
    Here $(\textbf{t}) \subseteq H_T^*(pt) \cong \C[\mathbf{t}]$ is the maximal ideal generated by $t_1, \dots, t_n$.
\end{cor}

If $n=3,4$, it is known that $\Sp_{\gamma}$ is equivariantly formal, shown in \cite{n=3} and \cite{n=4} respectively. It is also known for the equivalued case, when $d_i=d$ for all $i$, due to GKM \cite{GKM unramified}.

\begin{cor}
\label{cor: known eq formal cases}
If $n \leq 4$, or if $d_i = d$ for all $i$, then
$$\Delta H_*^T(\Sp_{\gamma}) \cong \cJ = \bigcap_{i < j} (t_i-t_j, x_i - x_j)^{d_{ij}}.$$
\end{cor}

The equivalued case of \ref{cor: known eq formal cases} was previously shown by Kivinen in \cite{Kivinen} using GKM theory as defined in \cite{GKM}. The proof of Theorem \ref{thm: intro J} relies on this result by Kivinen.

In \cite{sheaf} Gorsky, Kivinen and Oblomkov define a graded algebra $\mathcal{A}_G = \bigoplus_{d=0}^\infty {}_0 \mathcal{A}_d$ depending only on a reductive group $G$ with some specific properties, called the graded Coulomb branch algebra. Here we consider the case $G = \Gln$. One of the key properties is that for any $\gamma \in \mathfrak{g},$ the direct sum of homologies of affine Springer fibers

$$F_{\gamma} = \bigoplus_{k=0}^{\infty} H_*(\Sp_{t^k \gamma})
$$
is a graded module over $\mathcal{A}_G$ or, equivalently, that there is a corresponding quasi-coherent sheaf $\mathcal{F}_{\gamma}$ on $\Proj\ \bigoplus_{d=0}^\infty {}_0 \mathcal{A}_d$. They conjecture the following.

\begin{conj}[\cite{sheaf}]
    \label{intro conj: coulomb}
    The module $F_{\gamma}$ is finitely generated and the sheaf $\mathcal{F}_{\gamma}$ is coherent.
\end{conj}

\begin{thm}
    Conjecture \ref{intro conj: coulomb} holds for $G = \mathrm{GL}_3$ and $\gamma$ as in (\ref{gamma def}).
\end{thm}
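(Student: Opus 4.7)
The plan is to combine Theorem~\ref{thm: intro J} with the explicit generator and Hilbert series computations for $n=3$ established in the second half of the paper, and to identify enough shift operators inside $\mathcal{A}_{\mathrm{GL}_3}$ to carry a fixed finite set of elements across all summands $H_*(\Sp_{t^k\gamma})$ as $k$ varies. Since equivariant formality is known for $n=3$, Theorem~\ref{thm: intro J} applies at every level and identifies $H_*^T(\Sp_{t^k\gamma})$ with $\cJ_{t^k\gamma} = \bigcap_{i\neq j}(t_i - t_j, x_i - x_j)^{d_{ij}+k}$ (up to the factor $\Delta_k$), using that $\gamma \mapsto t^k\gamma$ simply shifts every $d_i$, and hence every $d_{ij}$, by $k$. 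After specializing the equivariant parameters $t_i$ to recover ordinary homology, finite generation of $F_\gamma$ over $\mathcal{A}_{\mathrm{GL}_3}$ becomes equivalent to finite generation, over a suitable subalgebra of $\mathcal{A}_{\mathrm{GL}_3}$, of the graded module
$$M \;=\; \bigoplus_{k \geq 0} \cJ_{t^k\gamma}.$$

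Next I would identify concrete shift operators on the ideal side. Multiplication by the Vandermonde $V = \prod_{i<j}(t_i - t_j)$ sends $\cJ_{t^k\gamma}$ into $\cJ_{t^{k+1}\gamma}$: for each pair $(i,j)$ the factor $(t_i - t_j)$ lies in the prime ideal $(t_i - t_j, x_i - x_j)$, so $V \cdot (t_i - t_j, x_i - x_j)^{m} \subseteq (t_i - t_j, x_i - x_j)^{m+1}$, and the intersection over all pairs is preserved. Combined with the evident action of symmetric polynomials in the $t_i$'s and $x_i^{\pm}$'s (degree-zero elements of $\mathcal{A}_{\mathrm{GL}_3}$) and analogous $x$-Vandermonde or mixed shifts, this supplies enough module structure on the ideal side. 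To conclude finite generation, I would invoke the $n=3$ case of Conjecture~\ref{intro conj: Hilbert}~b), proved earlier in the paper, which gives an exact count of minimal generators of each $\cJ_{t^k\gamma}$ via the rational function $F(d_1+k, d_2+k, d_3+k)$, and verify that this count is asymptotically matched by the generators produced from a fixed finite base set --- say, the minimal generators of $\cJ_\gamma$ together with finitely many low-$k$ corrections --- under the shift operators above.

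The main obstacle will be matching the abstract Coulomb branch action on $F_\gamma$ constructed in \cite{sheaf} with the concrete polynomial-multiplication picture, i.e., exhibiting the Vandermonde shifts and symmetric-function multiplications as actual elements of $\mathcal{A}_{\mathrm{GL}_3}$ acting compatibly with the identification of Theorem~\ref{thm: intro J}. Once that compatibility is in place, the rest is a combinatorial check in $\mathrm{GL}_3$ using the explicit Hilbert series and generator descriptions already developed for $n=3$, and coherence of $\mathcal{F}_\gamma$ follows formally from finite generation of $F_\gamma$.
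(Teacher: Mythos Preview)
Your proposal has a real gap. The shift operator you identify, the $t$-Vandermonde $V=\prod_{i<j}(t_i-t_j)$, does not survive the passage to ordinary homology: once you specialize the equivariant parameters to recover $H_*(\Sp_{t^k\gamma})$ from $H_*^T$, multiplication by $V$ becomes the zero map. So this cannot be the mechanism by which $\mathcal{A}_1$ carries generators from level $k$ to level $k+1$. You gesture at ``$x$-Vandermonde or mixed shifts'' as alternatives, but you then defer the entire identification of these with actual elements of $\mathcal{A}_{\mathrm{GL}_3}$ to a ``main obstacle'' paragraph; without that identification and without showing \emph{surjectivity} of the shift (not merely a Hilbert series count of minimal generators), there is no proof.

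The paper's argument avoids all of this. From \cite{sheaf} one knows $\mathcal{A}_0$ and $\mathcal{A}_1$ concretely: symmetric and antisymmetric polynomials in $\C[t_1,\ldots,t_n,x_1^{\pm},\ldots,x_n^{\pm}]$. By Haiman's Corollary~3.8.3, the ideal generated by the antisymmetric polynomials is exactly $\cJ(1,1)$. The decisive input is then Corollary~\ref{cor: J111}, the factorization $\cJ(d_1,d_2)=\cJ(1,1)^{d_1}\cdot\cJ(0,1)^{d_2-d_1}$, which immediately gives
\[
\cJ(d_1,d_2)\cdot\cJ(1,1)=\cJ(d_1+1,d_2+1).
\]
Thus the action of $\mathcal{A}_1$ is \emph{surjective} from each graded piece to the next, so $F_\gamma$ is generated in degree~$0$ by the (finitely many) generators of $\cJ(d_1,d_2)$. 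No asymptotic Hilbert series matching is needed; the factorization result does the work directly, and it is precisely the ingredient your outline is missing.
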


This result and many of the results in Section \ref{sec: intro J} rely on the specific combinatorics of the ideal $\cJ$ when $n=3$, which is covered in detail in Section \ref{sec: n=3}, with additional proofs in Section \ref{sec: proofs}.

\subsection{Generalized Haiman Ideals}
\label{sec: intro J}

For the rest of the introduction we assume that the $d_i$'s are ordered: $d_1\leq \ldots\leq d_n$. We will 
consider a similar ideal to $\cJ$ defined above,
$$
J'(d_1,\ldots,d_n)=\bigcap_{i < j} (t_i-t_j, x_i - x_j)^{d_{ij}} \subseteq \C[t_1,\ldots,t_n,x_1,\ldots,x_n].
$$
The ideal $\cJ$ is obtained from $J'(d_1,\ldots,d_n)$ by localization in $(x_1\cdots x_n)$.

In Section \ref{sec: Hilbert}, we define two rational functions, $H(d_1,\ldots,d_n)$ and $F(d_1,\ldots,d_n)$. The function $F(d_1, \dots, d_n)$ is also known as the generalized $(q,t)$-Catalan number, see \cite{qt}.

\begin{conj}
\label{intro conj: Hilbert}
a) The Hilbert series of the ideal $J'(d_1,\ldots,d_n)$ equals $H(d_1,\ldots,d_n)$.

b) The Hilbert series of the generating set $J'(d_1,\ldots,d_n)/\mathfrak{m}J'(d_1,\ldots,d_n)$ equals $F(d_1,\ldots,d_n)$, where $\mathfrak{m}$ is the maximal ideal $\mathfrak{m}=(t_1,\ldots,t_n,x_1,\ldots,x_n)$.
\end{conj}

In particular, Conjecture \ref{intro conj: Hilbert} implies  that $F(d_1,\ldots,d_n)$ is a polynomial in $q$ and $t$ with nonnegative coefficients (see \cite[Conjecture 1.3]{qt}) and provides an explicit algebraic interpretation of these coefficients. Similarly, the conjecture implies that $H(d_1,\ldots,d_n)$ is a power series in $q$ and $t$ with nonnegative coefficients. In Theorem \ref{thm: Hilbert}, we show that this Conjecture holds for $n=3$.

\begin{thm}
Conjecture \ref{intro conj: Hilbert} holds for $n=3$.    
\end{thm}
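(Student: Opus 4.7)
The plan is to prove both parts of Conjecture \ref{intro conj: Hilbert} for $n=3$ by giving an explicit generating set and a minimal free resolution of $J'(d_1,d_2,d_3)$, and then identifying the resulting Hilbert series with $H$ and $F$ by a direct comparison of closed formulas. With $d_1 \le d_2 \le d_3$ we have $d_{12}=d_{13}=d_1$ and $d_{23}=d_2$, so
$$J' \;=\; I_{12} \cap I_{13} \cap I_{23}, \qquad I_{ij} \;=\; (t_i-t_j,\, x_i-x_j)^{d_{ij}}.$$

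First I would analyze $K := I_{12} \cap I_{13}$. Both factors share the same exponent $d_1$, and after the linear change of variables $u_1 = t_1-t_2$, $v_1 = x_1-x_2$, $u_2 = t_1-t_3$, $v_2 = x_1-x_3$, the ideal $K$ becomes $(u_1,v_1)^{d_1} \cap (u_2,v_2)^{d_1}$ in variables living in disjoint pairs --- a tensor product situation whose generators and syzygies are completely explicit. Next I would intersect with $I_{23}$. The complication is that $I_{23}$ is supported on $(t_2-t_3,\,x_2-x_3)$, which is linearly dependent on the previous differences via $(t_2-t_3) = (t_1-t_3) - (t_1-t_2)$ and similarly for $x$. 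I would handle this by a careful Gröbner/normal-form argument to list a minimal set of generators indexed by triples of monomial exponents satisfying the three membership conditions; this gives the graded Betti number $\beta_0$ as a sum which I would then match term-by-term against the closed form defining $F(d_1,d_2,d_3)$ in Section \ref{sec: Hilbert}, yielding part (b).

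For part (a), I would compute the full Hilbert series either by building a minimal free resolution of $J'$ (of projective dimension at most $3$ in the relevant localization), or by repeatedly applying the Mayer--Vietoris sequence
$$0 \to I \cap I' \to I \oplus I' \to I + I' \to 0$$
to reduce the computation to Hilbert series of $I_{ij}$ and of the pairwise sums $I_{ij} + I_{k\ell}$: the former is that of $(u,v)^d$ in two variables, and the latter is a complete intersection after change of variables. Assembling these pieces yields a rational function which I would then manipulate to match $H(d_1,d_2,d_3)$.

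The main obstacle is the three-way intersection itself: the linear dependence among the difference variables prevents a naive tensor-product decomposition, so part of the proof must be a careful bookkeeping argument showing that certain overlap terms cancel exactly. Once the Hilbert series identities are reduced to polynomial identities in $d_1,d_2,d_3$ and the formal variables $q,t$, they can be checked either directly or by induction on $d_3 - d_2$ (equivalently, by deforming back to the well-understood case $d_2 = d_3$).
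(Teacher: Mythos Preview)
Your proposal is a plan rather than a proof: the decisive steps are left as the phrases ``a careful Gr\"obner/normal-form argument'' and ``careful bookkeeping'', which is precisely where all the content lies. The paper shares your high-level idea of making two of the three factors monomial and then intersecting with the third, but it makes a different and essential choice of which two. Setting $a=t_1-t_2,\ b=x_1-x_2,\ c=t_3-t_2,\ d=x_3-x_2$, the monomial part is $M=(a,b)^{d_1}\cap(c,d)^{d_2}$ and the non-monomial factor is $(a-c,b-d)^{d_1}$, so the \emph{smaller} exponent sits on the diagonal. The whole argument then rests on a concrete cancellation lemma (Lemma~\ref{lem: key lemma}): if $\varphi(b-d)+\psi(ad-bc)\in M$ then, up to the relation trading $(b-d)$ for $(ad-bc)$, one may take $\varphi\in M$. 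Its proof is a combinatorial ``cancellation chain'' argument tracking monomials under multiplication by $b/d$ and $ad/bc$, and the subsequent induction (Proposition~\ref{prop: all terms in M}) peels off one factor of $(b-d)$ at a time, at step $j$ replacing $M$ by $(a,b)^{d_1-j}\cap(c,d)^{d_2-j}$. With your ordering the diagonal carries exponent $d_2$, and the same induction would require $(a,b)^{d_1-j}$ for $j$ up to $d_2>d_1$, which breaks; so your choice is not merely cosmetic.

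For part~(a), your Mayer--Vietoris route does not work as stated: $I_{ij}+I_{k\ell}$ is not a complete intersection once the exponent exceeds~$1$ (already $(u_1,v_1)^2+(u_2,v_2)^2$ has six minimal generators with radical the maximal ideal). The paper sidesteps resolutions entirely: Proposition~\ref{prop: basis} produces an explicit $\mathbb{C}$-basis of $J$ (not just generators), and the Hilbert series is obtained by direct summation over that basis using Lemma~\ref{lem: s}. Part~(b) is likewise not a term-by-term match against the defining sum for $F$: instead the generator bidegrees are shown to be exactly the lattice points of an explicit trapezoid (Proposition~\ref{prop: bijection}), and this is compared to the closed formula for $F(d_1,d_2,d_3)$ already computed in~\cite{qt}.
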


If $d_i = d$ for all $i$, we will say that the ideal $J'$ is equivalued. In \cite{Haiman}, Haiman shows the following.

\begin{thm}[Haiman \cite{Haiman}]
\label{thm: Haiman}
    For any $n$,
    \begin{enumerate}
        \item The ideal $J'(d, \ldots, d)$ is free as a $\C[\mathbf{t}]$-module.
        \item  The ideal $J'(d, \ldots, d)$ is equal to a product,
    $$J'(d, \ldots, d) = J'(1, \ldots, 1)^d.$$
    \end{enumerate}
   
\end{thm}

It is easy to see that in general 
$$J'(d_1, \dots, d_n) \cdot J'(d_1', \dots d_n') \subseteq J'(d_1 + d_1', \dots, d_n + d_n').$$ We conjecture that  Theorem \ref{thm: Haiman} can be generalized to the non-equivalued case, and that the above inclusion is always an equality.
\begin{conj}
    \label{conj: J product}
    For any $n$, assume $d_1\le d_2\le \cdots \le d_n$. Then,
    \begin{enumerate}
        \item The ideal $J'(d_1, \ldots, d_n)$ is free as a $\C[\mathbf{t}]$-module.
        \item The ideal $J'(d_1, \ldots, d_n)$ can be written as the product
    $$J'(d_1, \ldots, d_n) = J'(1, \ldots, 1)^{d_1} \cdot J'(0, 1, \ldots, 1)^{d_2-d_1} \cdot \ldots \cdot J'(0, \ldots, 0, 1)^{d_{n-1} - d_{n-2}}.$$
    \end{enumerate}
\end{conj}

Statement (1) immediately follows in any cases where $\Sp_{\gamma}$ is known to be equivariantly formal, in particular for $n \leq 4$. In Corollary \ref{cor: J111} we show that statement (2) holds in the $n=3$ case.

\begin{thm}
    \label{intro: J111}
    The ideal $J'(d_1, d_2)$ can be written as a product 
    $$J'(d_1, d_2) = J'(1,1)^{d_1} \cdot J'(0,1)^{d_2 - d_1}$$  
\end{thm}

\subsection{Link Homology}
\label{sec: intro link hom}
In Section \ref{sec: recursion}, we use a recursive process of Hogancamp and Elias \cite{torus links} to compute $\HHH^{a=0}(L_{\gamma})$ for $n=3$. We are then able to relate $\HHH^{a=0}(L_{\gamma})$ to the ideal $\cJ$, and thus the homology of $\Sp_{\gamma}$, to show that a weaker version of Conjecture \ref{conj: ORS} (without the perverse filtration) holds in this case. In order to show this, we use the y-ified Khovanov-Rozansky homology $\HY$ as defined by Gorsky and Hogancamp in \cite{GH}. 

\begin{thm}
\label{thm: intro HHH}
    For $n=3$ and $\gamma$ as in $\ref{gamma def}$,
    $$\HY^{a=0}(L_{\gamma})\otimes_{\C[\mathbf{x}]}\C[\mathbf{x},\mathbf{x}^{\pm}] \cong \Delta H_*^T(\Sp_{\gamma})$$
and $$\HHH^{a=0}(L_{\gamma})\otimes_{\C[\mathbf{x}]}\C[\mathbf{x},\mathbf{x}^{\pm}] \cong H_*(\Sp_{\gamma}).$$
\end{thm}

We are optimistic that the proof of Theorem \ref{thm: intro HHH} can be generalized to larger $n$, and one
may actually be able to find an explicit recursion to compute $\HHH^{a=0}(L_{\gamma})$ for any $n$, which would tell us more about the structure of $H_*(\Sp_{\gamma})$ beyond $n=3$. 

\subsection{The Fundamental Domain}

Finally, in the appendix  we discuss the cells of the fundamental domain of $\Sp_{\gamma}$, as described by Chen in \cite{Chen}, and relate this to the combinatorics of $J'$ for $n=3$. We show that there is a bijection between half of the cells in the fundamental domain and the generators of the ideal $J'(d_1,d_2).$ We expect that this bijection indicates a stronger relationship between the cells of $\Sp_\gamma$ and the combinatorics of $J'$ and of generalized $(q,t)$-Catalan numbers.

\section*{Acknowledgments}

This work was partially supported by the NSF grants
DMS-1760329  and DMS-2302305. The author would like to thank Erik Carlsson, Eugene Gorsky, Matt Hogancamp, Oscar Kivinen, Yuze Luan, and Alexei Oblomkov for useful discussions.

\section{Background}
\label{sec: background}
First, some notation. Let $\cK = \C((t))$ be the field of Laurent power series in $t$, and $\cO = \C[[t]]$ be the ring of power series in $t$. For nonzero $f \in \cK$, let $\nu(f)$ denote the order of $f$, which is the degree of the smallest nonzero term. Throughout, $H_*^T(X)$ will refer to the equivariant Borel-Moore homology of $X$, and $H_*(X)$ refers to regular Borel-Moore homology.

\begin{defn}
    A \textit{lattice} $\Lambda \subseteq\cK^n$ is a free $\cO$-submodule of $\cK^n$ of rank $n$ such that $\Lambda \otimes_{\cO} \cK = \cK^n$. In other words, it is the $\cO$-span of a basis of $\cK^n$ over $\cK$.
\end{defn}

\begin{defn}
 The \textit{affine Grassmanian} $\text{Gr}_{\text{GL}_n}(\C)$ of $\text{GL}_n$ over $\C$ is an ind-scheme defined as the space of all lattices $\Lambda \subseteq \mathcal{K}^n$.
\end{defn}

We can equivalently define the affine Grassmanian as $$\Gln(\cK)/\Gln(\cO),$$
as $\Gln(\cK)$ acts transitively on the space of lattices, and the stabilizer of the standard lattice $\cO^n$ is precisely $\text{GL}_n(\cO)$. We will often conflate a matrix $g \in \Gln(\cK)$ with its coset representative in $\Gr(\Gln)$.

We define $\Gr(\Sln)$ similarly, either as $\Sln(\cK)/\Sln(\cO)$, or as the space of lattices of $\Sln$ type. We say that a lattice $\Lambda$ is of $\Sln$ type if it can be written as $\Lambda = g\cO^n$ for some $g \in \Sln(\cK)$. 

We will always be working over $\C$, so we will use $\Gr(\Gln)$ or $\Gr(\Sln)$ for the affine Grassmanian, or just $\Gr$ when the group $G$ is clear.


\begin{defn}[\cite{KL}]
\label{def: ASF}
     The \textit{affine Springer fiber} $\Sp_{\gamma}$ of an element $\gamma \in \mathfrak{gl}_n(\mathcal{K})$ is a sub ind-scheme of the affine Grassmanian, defined accordingly as the space of lattices $\Lambda \in \text{Gr}_{\text{GL}_n}$ such that $\gamma \Lambda \subseteq\Lambda$, or as the space of $g \in \text{GL}_n(\mathcal{K})/\text{GL}_n(\mathcal{O})$ such that $g^{-1} \gamma g \in \mathfrak{gl}_n(\mathcal{O})$.
\end{defn}

\begin{fact}[\cite{Yun}]
    If $\gamma$ is regular, semi-simple, and topologically nilpotent, then $\Sp_{\gamma}$ is finite dimensional, although it can still have infinitely many irreducible components. In our cases, these conditions essentially are that $\gamma$ is diagonalizable, has distinct eigenvalues $\lambda_i$ (over $\bar{\mathcal{K}}$), and that $\nu(\lambda_i) > 0$ for all $i$.
\end{fact}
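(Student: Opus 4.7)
My plan is to analyze both claims via the centralizer action. Since $\gamma$ is regular semisimple, the centralizer $T_\gamma = Z_{\Gln(\cK)}(\gamma)$ is a maximal torus isomorphic to $\prod_i E_i^{\times}$, where the $E_i$ are the fields generated by the eigenvalues of the irreducible factors of the characteristic polynomial $p(\lambda) = \det(\lambda I - \gamma)$ over $\cK$. Its discrete quotient $\Lambda_\gamma := T_\gamma/(T_\gamma \cap \Gln(\cO))$ is a free abelian group of rank $r$ equal to the number of such irreducible factors (equivalently, the number of branches of the spectral curve at the origin). This group acts on $\Sp_\gamma$ by left multiplication, since commuting with $\gamma$ preserves $\gamma$-stability of lattices.

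For finite-dimensionality, I would first use topological nilpotence, i.e.\ $\nu(\lambda_i) \geq 0$, to conclude that $p(\lambda) \in \cO[\lambda]$. By Cayley--Hamilton, any $\Lambda \in \Sp_\gamma$ is naturally a torsion-free module over $\cO[\gamma]/(p(\gamma))$, the local coordinate ring of the spectral curve. Using this module structure together with the relative-position invariant $\nu(\det(\Lambda \,|\, \cO^n))$, which is well-defined on $\Lambda_\gamma$-orbits only modulo a discrete subgroup, one exhibits nonnegative integers $N_{\pm}$ depending only on $\gamma$ such that every orbit in $\Sp_\gamma/\Lambda_\gamma$ has a representative satisfying $t^{N_+} \cO^n \subseteq \Lambda \subseteq t^{-N_-} \cO^n$. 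This realizes $\Sp_\gamma/\Lambda_\gamma$ as a closed subscheme of the classical Grassmannian of subspaces of $t^{-N_-}\cO^n/t^{N_+}\cO^n$, which is projective and finite-dimensional; hence so is the fundamental domain, and therefore so is $\Sp_\gamma$ itself, whose irreducible components are $\Lambda_\gamma$-translates of those of the fundamental domain.

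For the infinitely-many-components claim, I note that $\Lambda_\gamma$ acts freely on $\Sp_\gamma$: any nontrivial element of $T_\gamma$ outside $T_\gamma \cap \Gln(\cO)$ changes the relative position of at least one lattice and hence fixes no point of $\Sp_\gamma$. When $r \geq 2$, i.e.\ the spectral curve has at least two branches at the origin, $\Lambda_\gamma$ has positive rank, and the translates of any single irreducible component of the fundamental domain give infinitely many distinct irreducible components of $\Sp_\gamma$. The final sentence of the statement, identifying the abstract hypotheses with the concrete eigenvalue conditions, is essentially a dictionary: regular semisimplicity means distinct eigenvalues in $\overline{\cK}$, and topological nilpotence in Yun's sense is integrality of these eigenvalues.

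The main obstacle is the boundedness step in the second paragraph: one has to delicately choose a $\Lambda_\gamma$-representative so that $\Lambda$ lies in a fixed finite-dimensional slice. This is precisely where integrality $\nu(\lambda_i) \geq 0$ is essential, since an eigenvalue of negative valuation would permit $\gamma$-stable lattices of unbounded ``width'' even after passing to the $\Lambda_\gamma$-quotient, breaking finite-dimensionality. Carrying out this step carefully is the technical heart of the Kazhdan--Lusztig analysis, and from it one can extract the explicit dimension formula $\dim \Sp_\gamma = \tfrac{1}{2}(\nu(\mathrm{disc}(p)) - n + r)$.
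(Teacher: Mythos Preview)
The paper does not prove this statement; it is recorded as a Fact with a citation to Yun's lecture notes (and the result ultimately goes back to Kazhdan--Lusztig). There is therefore no in-paper proof to compare against.

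Your outline is a faithful sketch of the standard Kazhdan--Lusztig argument: the centralizer lattice $\Lambda_\gamma$, the use of integrality of the eigenvalues to force $p(\lambda)\in\cO[\lambda]$, and the boundedness step exhibiting a fundamental domain inside a finite classical Grassmannian are exactly the ingredients one finds in the cited references, and the dimension formula you quote at the end is the correct one. Two small points are worth tightening. First, the freeness of the $\Lambda_\gamma$-action is not quite as immediate as ``changes the relative position of at least one lattice'': what one actually uses is that if $g\in T_\gamma$ stabilizes some lattice then the characteristic polynomial of $g$ must lie in $\cO[\lambda]$ with unit constant term, which forces each eigenvalue of $g$ to be a unit and hence $g\in T_\gamma\cap\Gln(\cO)$; this deserves a sentence. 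Second, ``topologically nilpotent'' in the strict sense means $\nu(\lambda_i)>0$ rather than $\geq 0$; the paper itself hedges this with the word ``essentially,'' and for finite-dimensionality the weaker integrality condition $\nu(\lambda_i)\geq 0$ is what is actually used, so your reading is consistent with the paper's intent.
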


\begin{example}[\cite{Yun}]
    Consider the matrix
    $$\gamma = \begin{pmatrix}
    t & 0\\
    0 & -t
    \end{pmatrix}$$
    in $\mathfrak{sl}_2(\mathcal{K})$. Then $\Sp_{\gamma}$ looks like an infinite chain of $\PP^1$'s connected at $0$ and $\infty$. There is a $\C^*$ action that scales each $\PP^1$, and a $\Z$ action that translates them.
\end{example}

There is a natural action of the centralizer $C(\gamma)$ on $\Sp_{\gamma}$. In the case where $\gamma$ is diagonal, this gives a torus action of $T = (\C^*)^n $ and lattice action of $\Lambda = \Z^n$ on $\Sp_{\gamma}$ over $\Gln$ (or $(\C^*)^{n-1} $ and $\Z^{n-1}$ over $\Sln$) that can be respectively seen as multiplication by the matrices in $C(\gamma)$:
$$
\lambda = 
\begin{pmatrix}
    \lambda_1 & & \\
     & \ddots & \\
      & & \lambda_n
\end{pmatrix}$$
and 
$$Z = 
\begin{pmatrix}
    t^{m_1} & & \\
     & \ddots & \\
     & & t^{m_n}
\end{pmatrix}.$$

In general for a $T$-ind-scheme $X$, $H_*^T(X)$ is naturally a module over $H_T^*(pt)$ via the cap product.

\begin{defn}
\label{def: eq formal}
    We will say that $X$ is \textit{equivariantly formal} if $H_*^T(X)$ is free as a module over $H_T^*(pt)$.
\end{defn} 

\begin{fact} (GKM \cite{GKM})
    \label{fact: mod y}
    If $X$ is equivariantly formal, then 
    $$H_*(X) \cong H_*^T(X)/(\mathbf{t}) H_*^T(X),$$

    as modules over $H_T^*(pt)$. Here $(\textbf{t}) \subseteq H_T^*(pt) \cong \C[\mathbf{t}]$ is the maximal ideal generated by $t_1, \dots, t_n$.
\end{fact}

We will need the following localization lemma as stated by Brion.

\begin{lemma}[Brion \cite{Brion}]
\label{brion}
    Let $X$ be a $T$-ind-scheme and $T' \subseteq T$ a subtorus. If $i: X^{T'} \rightarrow X$ is the inclusion of $T'$-fixed points, then the induced map $$
    i_*: H^T_*(X^{T'}) \rightarrow H^T_*(X)
    $$
    is an isomorphism after inverting finitely many characters of $T$ that restrict nontrivially to $T'$. Further, if $X$ is equivariantly formal, then the induced map $i_{*}$ is injective, and we have that
    $$\prod_{\chi, \chi_{T'} \neq 0} \chi H_*^T(X) \subseteq H_*^T(X^{T'})$$
    where we take the product over all characters of $T$ that restrict nontrivially to $T'$.
\end{lemma}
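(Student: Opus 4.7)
The plan is to establish the isomorphism after localization via the long exact sequence for the pair $(X, X \setminus X^{T'})$ in equivariant Borel-Moore homology, and then deduce the injectivity and the containment from the torsion-freeness coming from equivariant formality. The key input is that $T$-equivariant Borel-Moore homology of a $T$-space on which $T'$ acts with trivial fixed locus is annihilated by characters of $T$ that restrict nontrivially to $T'$.

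First I would write down the long exact sequence
\[
\cdots \to H_*^T(X^{T'}) \xrightarrow{i_*} H_*^T(X) \to H_*^T(X \setminus X^{T'}) \to \cdots
\]
and reduce the first claim to showing that $H_*^T(X \setminus X^{T'})$ becomes zero after inverting a finite product of characters nontrivial on $T'$. To do this, I would stratify $X \setminus X^{T'}$ by $T$-orbit type, working on finite-dimensional closed sub-ind-schemes at a time. Each stratum fibers with fiber $T/H$ for a closed subgroup $H$ with $T' \not\subseteq H$. The $H_T^*(pt)$-module structure on $H_T^*(T/H) \cong H_H^*(pt)$ factors through the restriction $\widehat{T} \to \widehat{H}$, so any character in the kernel of this restriction acts as zero; since $T' \not\subseteq H$, one can pick such a character that is additionally nontrivial on $T'$. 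A Mayer--Vietoris / spectral sequence argument for the stratification then shows that $H_*^T(X \setminus X^{T'})$ is annihilated by a finite product of such characters.

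For the second part, I would use that when $X$ is equivariantly formal, the fixed locus $X^{T'}$ inherits equivariant formality (a standard consequence of the equivariant cohomology spectral sequence), so both $H_*^T(X)$ and $H_*^T(X^{T'})$ are free, hence torsion-free, over $H_T^*(pt)$. Combined with the isomorphism after localization established above, torsion-freeness immediately gives injectivity of $i_*$. The containment $\prod_{\chi} \chi \cdot H_*^T(X) \subseteq H_*^T(X^{T'})$ then follows by clearing denominators: every class in $H_*^T(X)$ becomes the image of some class in $H_*^T(X^{T'})$ after inverting the relevant characters, so multiplying by a sufficient product of those characters puts it in the image of $i_*$, which we identify with $H_*^T(X^{T'})$ itself by injectivity.

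The main obstacle I would anticipate is handling the ind-scheme setting in the stratification argument. For finite-dimensional varieties one runs Noetherian induction on stratum closures; for an ind-scheme one must check that the characters needed can be chosen uniformly across the exhausting filtration, or equivalently that a compatible family of such products exists on each finite-dimensional piece. In the case of the affine Springer fibers $\Sp_{\gamma}$ considered in this paper, the $T$-action comes from a fixed finite-dimensional torus and the relevant characters lie in a bounded set determined by the weights appearing in $\gamma$, so this technicality is manageable.
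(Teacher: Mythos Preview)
The paper does not actually prove this lemma: it is stated as Lemma~\ref{brion} with attribution to Brion \cite{Brion} and no proof is given. So there is nothing in the paper to compare your argument against directly.

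That said, your sketch is the standard localization argument and is essentially what one finds in Brion's paper: the long exact sequence for the pair $(X,X\setminus X^{T'})$, the fact that the equivariant Borel--Moore homology of the complement is torsion (annihilated by a product of characters nontrivial on $T'$) via an orbit-type stratification, and then torsion-freeness from equivariant formality to get injectivity and the containment by clearing denominators. Your identification of the ind-scheme issue is apt; in the paper's applications only finitely many root characters $t_i-t_j$ arise, so the product is uniform across the exhausting filtration, which is exactly the point you make at the end. One small caveat: the claim that $X^{T'}$ is automatically equivariantly formal whenever $X$ is requires care in general, but for the injectivity statement you do not actually need it---torsion-freeness of $H_*^T(X)$ alone suffices, since $i_*$ becomes an isomorphism after localization and any kernel would be torsion.
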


We will also make frequent use of the Iwasawa decomposition for $\Gr(\Gln)$, which tells us that all $g\in \Gr(\Gln)$ can be represented by a product $DU$ of a diagonal matrix
\begin{equation}
\label{defn D}
D = 
\begin{pmatrix}
    t^{m_1} & & \\
    & \ddots & \\
    & & t^{m_n} 
\end{pmatrix}
\end{equation}
with a unipotent matrix $U$ with $1$'s on the diagonal and entries $\chi_{ij}$ above the diagonal \cite{Yun}. 
Further, these $\chi_{ij}$'s are unique up to $\cO$, so we can choose them to have all coefficients of nonnegative powers of  $t$ be $0$, so that each matrix $DU$ represents a unique element $g \in \Gr$.

\begin{lemma}
\label{Gr fixed pts}
    The $T$-fixed points of $\Gr(\Gln)$ can be uniquely represented by diagonal matrices $D$ as in the Iwasawa decomposition.
\end{lemma}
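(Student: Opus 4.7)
The plan is to exploit the Iwasawa decomposition directly. The $T$-action on $\Gr(\Gln)$ is by left multiplication via $T \hookrightarrow \Gln(\cO) \hookrightarrow \Gln(\cK)$, so a coset $[g] \in \Gr$ is $T$-fixed iff $g^{-1}\tau g \in \Gln(\cO)$ for every $\tau \in T$.

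First I would write $g = DU$ using the Iwasawa decomposition, with $D$ diagonal as in (\ref{defn D}) and $U$ unipotent upper triangular whose above-diagonal entries $\chi_{ij}$ are polynomials in $t^{-1}$ with no constant term (the normalization fixed in the paragraph before the lemma). Since $\tau$ is diagonal it commutes with $D$, so the fixed-point condition reduces to $U^{-1}\tau U \in \Gln(\cO)$ for all $\tau \in T$. One direction is then immediate: if $U = I$, this becomes $\tau \in T \subseteq \Gln(\cO)$, so every diagonal $D$ yields a $T$-fixed point.

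For the converse I would compute the strictly upper triangular entries of $U^{-1}\tau U$ by induction on $j - i$. In the base case $j = i+1$, a direct expansion gives
$$(U^{-1}\tau U)_{i,i+1} = (\tau_i - \tau_{i+1})\chi_{i,i+1};$$
since $\chi_{i,i+1}$ lies in $t^{-1}\C[t^{-1}]$, demanding this lie in $\cO$ for every $\tau \in T$ forces $\chi_{i,i+1} = 0$. For the inductive step, once all $\chi_{kl}$ with $l - k < j - i$ vanish, the corresponding entries of $U^{-1}$ (whose formulas are polynomials in the $\chi_{kl}$) also vanish below distance $j - i$, and the expansion of $(U^{-1}\tau U)_{ij}$ collapses to $(\tau_i - \tau_j)\chi_{ij}$, forcing $\chi_{ij} = 0$ by the same argument. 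Hence $U = I$ and $g = D$.

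For uniqueness, if two diagonal matrices $D, D'$ with entries $t^{m_i}$ and $t^{m_i'}$ represent the same coset, then $(D')^{-1}D = \mathrm{diag}(t^{m_i - m_i'})$ lies in $\Gln(\cO)$, which forces each $t^{m_i - m_i'}$ to be a unit of $\cO$ and therefore $m_i = m_i'$ for all $i$. The only technical step is the inductive computation of $(U^{-1}\tau U)_{ij}$; once the cross terms from $U^{-1}$ are carefully bookkept this is essentially a formal triangular-matrix calculation, so no serious obstacle is anticipated.
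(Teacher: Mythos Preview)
Your argument is correct, but the paper's proof avoids the induction entirely by conjugating on the other side. Instead of studying $g^{-1}\tau g = U^{-1}\tau U$ and peeling off the entries of $U^{-1}\tau U$ one superdiagonal at a time, the paper observes that since $\lambda^{-1}\in\Gln(\cO)$, the coset $[\lambda g]$ equals $[\lambda g\lambda^{-1}]$; and because $\lambda$ is diagonal, $\lambda g\lambda^{-1}=D(\lambda U\lambda^{-1})=DU'$ with $U'$ unipotent and $(U')_{ij}=\frac{\lambda_i}{\lambda_j}\chi_{ij}$ computed in one line with no cross terms. Then the \emph{uniqueness} of the normalized Iwasawa form immediately gives $\frac{\lambda_i}{\lambda_j}\chi_{ij}=\chi_{ij}$ for all $\lambda$, hence $\chi_{ij}=0$. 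Your route trades this single conjugation-by-a-diagonal for an inductive computation of $U^{-1}\tau U$; it works, but the paper's version is shorter and also makes transparent why the later Lemma on $T'$-fixed points (where $\lambda_i=\lambda_j$ for a single pair) falls out with exactly the same calculation.
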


    \begin{proof}
        Let $\lambda \in T$ and $g = DU$ be as in the Iwasawa decomposition. Since $\lambda^{-1} \in \Gln(\cO)$, up to multiplication on the right by $\Gln(\cO),$ we get
        $$\lambda g = \lambda g \lambda^{-1} = DU',$$
        where $D$ is as above, and $U'$ is unipotent with $\frac{\lambda_i}{\lambda_j}\chi_{ij}$ above the diagonal. If $g$ is a fixed point under the action of $T$, we must have $\frac{\lambda_i}{\lambda_j}\chi_{ij} = \chi_{ij}$ for all $i,j$ and for all $\lambda \in T$. This can only happen if $\chi_{ij} = 0$ for all $i,j$, so $g = D$ is diagonal as desired.
    \end{proof}

Since the $T$-action on $\Sp_{\gamma}$ comes from the action on $\Gr$, the $T$-fixed points of $\Sp_{\gamma}$ are simply the $T$-fixed points of $\Gr$ that are contained in $\Sp_{\gamma}$. 

\begin{lemma}
\label{gamma fixed pts}
    If $\gamma$ is diagonal and the orders of its eigenvalues are all nonnegative, then
    $$\Sp_{\gamma}^T = \Gr^T.$$
\end{lemma}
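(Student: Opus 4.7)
The plan is to use Lemma \ref{Gr fixed pts} to reduce to checking that each diagonal representative of a $T$-fixed point of $\Gr$ lands in $\Sp_\gamma$. The inclusion $\Sp_\gamma^T \subseteq \Gr^T$ is automatic since $\Sp_\gamma \subseteq \Gr$, so the only content is $\Gr^T \subseteq \Sp_\gamma^T$, which reduces to $\Gr^T \subseteq \Sp_\gamma$.

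First I would invoke Lemma \ref{Gr fixed pts} to write any $g \in \Gr^T$ as a diagonal matrix
\[
D = \diag(t^{m_1}, \ldots, t^{m_n}).
\]
By Definition \ref{def: ASF}, showing $D \in \Sp_\gamma$ amounts to showing $D^{-1} \gamma D \in \gln(\cO)$. Since $\gamma$ is assumed diagonal and diagonal matrices commute, we have $D^{-1} \gamma D = \gamma$. Finally, the hypothesis that the orders of the eigenvalues of $\gamma$ are all nonnegative means exactly that every diagonal entry of $\gamma$ lies in $\cO$, so $\gamma \in \gln(\cO)$, as required.

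There is no real obstacle here; the result is a direct consequence of the two facts that (i) a diagonal matrix commutes with another diagonal matrix, and (ii) the nonnegativity of the orders of the eigenvalues places $\gamma$ itself in $\gln(\cO)$. The statement should be presented in two short sentences invoking Lemma \ref{Gr fixed pts} and the commutativity observation.
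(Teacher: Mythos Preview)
Your proposal is correct and follows essentially the same argument as the paper: note the trivial inclusion $\Sp_\gamma^T \subseteq \Gr^T$, use Lemma~\ref{Gr fixed pts} to write any $g\in\Gr^T$ as a diagonal matrix, observe that diagonal matrices commute so $g^{-1}\gamma g=\gamma$, and conclude from the nonnegativity of the eigenvalue orders that $\gamma\in\gln(\cO)$.
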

\begin{proof}
    For any $\gamma$, $\Sp_{\gamma}^T \subseteq \Gr^T$ as stated above. If $g \in \Gr^T$, then
    $$g =  
\begin{pmatrix}
    t^{m_1} & & \\
    & \ddots & \\
    & & t^{m_n} 
\end{pmatrix}$$
by Lemma \ref{Gr fixed pts}. As $g$ and $\gamma$ are diagonal, $g^{-1}\gamma g = \gamma,$ and $\gamma \in \mathfrak{gl}_n(\cO)$, since the eigenvalues of $\gamma$ are all in $\cO$. So $\Gr^T \subseteq \Sp_{\gamma}^T$. 
\end{proof}

In particular, this means that the $T$-fixed points of $\Sp_{\gamma}$ are discrete and isomorphic to the integer lattice $\Lambda = \Z^n$, so can view $H_*(\Sp_{\gamma})$ as a module over 
$$H^*_T(\Sp_{\gamma}^T) \cong H^*_T(pt) \otimes \C[\Lambda] \cong \C[t_1, \dots, t_n, x_1^{\pm}, \dots, x_n^{\pm}].$$
Here, the $t_i$'s are our equivariant parameters, and a monomial $x_1^{a_1} \cdots x_n^{a_n}$ corresponds to the fixed point $\diag(t^{a_1},\ldots,t^{a_n})$.
The lattice $\Lambda$ acts on $\Sp_{\gamma}^T$ and on $\Sp_{\gamma}$ by translation.

\begin{lemma}
\label{lem: T' fixed Gr}
    Fix $i < j$. If $T' \subseteq T$ is a codimension 1 subtorus cut out by $t_i = t_j$, then the $T'$-fixed points of $\Gr(\Gln)$ are of the form $DU$, where $D$ is as in \eqref{defn D} and
    $$U = 
    \begin{pmatrix}
    1 & & \chi_{ij}\\
    & \ddots & \\
    & & 1 
    \end{pmatrix}$$
 with all $\chi$'s zero except for $\chi_{ij}$.
\end{lemma}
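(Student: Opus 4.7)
My plan is to imitate the proof of Lemma \ref{Gr fixed pts} almost verbatim, with the only new ingredient being a character computation for the subtorus $T'$ in place of the full torus $T$.

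The first step is to write an arbitrary element $g \in \Gr(\Gln)$ in its canonical Iwasawa form $g = DU$, with $D$ diagonal as in \eqref{defn D} and $U$ upper-unitriangular whose above-diagonal entries $\chi_{kl}$ have vanishing coefficients for all nonnegative powers of $t$. For any $\lambda \in T'$, since $\lambda^{-1} \in \Gln(\cO)$, one has $\lambda g = \lambda g \lambda^{-1}$ in $\Gr$, and this equals $D U'$ where $U'$ has entries $(\lambda_k/\lambda_l)\chi_{kl}$ above the diagonal. By the uniqueness of the canonical form, $g$ is $T'$-fixed iff $(\lambda_k/\lambda_l)\chi_{kl} = \chi_{kl}$ for all $k < l$ and all $\lambda \in T'$.

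The second step is a character computation: I would identify which $\lambda_k \lambda_l^{-1}$ restrict to the trivial character of $T'$. Since $T'$ is cut out by $\lambda_i = \lambda_j$, the quotient $T/T'$ is generated by $\lambda_i/\lambda_j$, so $\lambda_k \lambda_l^{-1}$ is trivial on $T'$ iff it lies in the subgroup generated by $\lambda_i \lambda_j^{-1}$. Combined with $k \neq l$ and the ordering $k<l$, $i<j$, this forces $(k,l) = (i,j)$. Hence all $\chi_{kl}$ other than $\chi_{ij}$ must vanish, yielding precisely the form claimed in the statement, while $\chi_{ij}$ itself is unconstrained.

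There is no real obstacle here; the only subtle point is making sure the uniqueness normalization of the $\chi_{kl}$ is invoked correctly so that the scaling equation $(\lambda_k/\lambda_l)\chi_{kl} = \chi_{kl}$ forces strict vanishing rather than mere equivalence modulo $\cO$. With that in hand, the lemma is an immediate consequence of the two steps above.
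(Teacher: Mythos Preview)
Your proposal is correct and follows essentially the same approach as the paper's proof: use the Iwasawa form $g=DU$, compute $\lambda g\lambda^{-1}=DU'$, and compare the normalized $\chi_{kl}$ entries. Your write-up is in fact more careful than the paper's, since you spell out the character computation showing $\lambda_k\lambda_l^{-1}$ restricts trivially to $T'$ only for $(k,l)=(i,j)$ and flag the role of the uniqueness normalization, whereas the paper simply asserts these points.
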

    \begin{proof}
        As before, up to equivalence,

        $$\lambda g = \lambda g \lambda^{-1} = DU',$$
 where $U'$ has $\frac{\lambda_k}{\lambda_l}\chi_{kl}$ above the diagonal. If $g$ is a fixed point for $T$, since $\lambda_i = \lambda_j,$ $\chi_{ij}$ can be arbitrary, but $\chi_{kl}=0$ for all $(k,l) \neq (i,j)$. So the fixed points are as described.
    \end{proof}

\begin{cor} If $T'$ is a codimension 1 subtorus of $T$ cut out by $t_i=t_j$, then
    $$\Gr(\Gln)^{T'} \cong \Gr(\mathrm{GL}_2) \times \Z^{n-2}.$$
\end{cor}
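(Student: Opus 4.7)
The plan is to extract an explicit isomorphism directly from the parametrization given by Lemma \ref{lem: T' fixed Gr}. That lemma tells us that every $T'$-fixed point of $\Gr(\Gln)$ has a unique Iwasawa representative $DU$ with $D = \diag(t^{m_1},\ldots,t^{m_n})$ and $U$ equal to the identity except for a single off-diagonal entry $\chi_{ij}$ in position $(i,j)$, where $\chi_{ij}$ ranges over the space of truncated Laurent tails (only strictly negative powers of $t$). Thus the $T'$-fixed locus of $\Gr(\Gln)$ is parametrized by the data $(m_1,\ldots,m_n,\chi_{ij})$.

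First I would observe that the same Iwasawa decomposition applied to $\Gr(\mathrm{GL}_2)$ identifies its points bijectively with triples $(m,m',\chi)$ via the unique representative $\bigl(\begin{smallmatrix} t^{m} & \chi \\ 0 & t^{m'} \end{smallmatrix}\bigr)$. Meanwhile, the tuple $(m_k)_{k \neq i,j}$ of the remaining diagonal exponents naturally lives in $\Z^{n-2}$. I would then define the candidate map
$$\Phi \colon \Gr(\Gln)^{T'} \longrightarrow \Gr(\mathrm{GL}_2) \times \Z^{n-2}$$
by sending the representative $DU$ to the pair consisting of its $(i,j)$-block (an element of $\Gr(\mathrm{GL}_2)$ written in Iwasawa form) and the tuple $(m_k)_{k \neq i,j}$. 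The two uniqueness statements for Iwasawa representatives immediately force $\Phi$ to be a bijection on points: the inverse reassembles the $\mathrm{GL}_2$ representative as the $(i,j)$-block of an $n \times n$ matrix and spreads the leftover $t^{m_k}$'s along the diagonal.

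Finally I would upgrade this to an isomorphism of ind-schemes. Both sides are naturally filtered by bounding $|m_k|$ for all $k$ and the pole order of $\chi_{ij}$; each such bound cuts out a finite-dimensional piece, and $\Phi$ is visibly algebraic on each piece (it is just a permutation and regrouping of coordinates). I expect this bookkeeping of the ind-structure — matching the exhaustion on $\Gr(\Gln)^{T'}$ with the product exhaustion on $\Gr(\mathrm{GL}_2) \times \Z^{n-2}$ — to be the only mildly tedious step, and it is really the main obstacle. However, there is no substantive geometric difficulty: all the content is already packaged in Lemma \ref{lem: T' fixed Gr}, and the corollary amounts to reading off how the fixed point parametrization factors into a $(i,j)$-block and the remaining diagonal data.
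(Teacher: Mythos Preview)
Your proposal is correct and follows essentially the same approach as the paper: extract the $2\times 2$ block in rows and columns $i,j$ to get $\Gr(\mathrm{GL}_2)$, and record the remaining diagonal exponents $(m_k)_{k\neq i,j}$ as the $\Z^{n-2}$ factor. The paper's proof is a terse three-sentence version of exactly this argument and does not spell out the ind-scheme bookkeeping you mention.
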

\begin{proof}
Each of the $T'$ fixed points is represented by $DU$ above. Looking at the $2\times 2$ submatrix of $DU$ in rows and columns $i,j$, we see a copy of $\Gr(\mathrm{GL}_2)$. The rest of the $m_i$ are free integers, and there are $n-2$ of them.
\end{proof}

\section{Borel-Moore Homology of \texorpdfstring{$\Sp_{\gamma}$}{Spg}}
\label{sec: ASF}
We want to find the equivariant Borel-Moore homology $H_*^T(\Sp_\gamma)$ of the class of affine Springer fibers with
$$
\gamma = 
        \begin{pmatrix}
         \gamma_1 &  & 0\\
         & \ddots & \\
        0 & & \gamma_n
        \end{pmatrix}
        =
        \begin{pmatrix}
         z_1t^{d_1} &  & 0\\
         & \ddots & \\
        0 & & z_nt^{d_n}
        \end{pmatrix}.
$$
Here $z_i \in \C^*$ are pairwise distinct and $d_i \geq 0$. We can assume up to a change of basis that $d_1 \leq \dots \leq d_n$.

It is known that for $\gamma$ as above, $\Sp_{\gamma}$ is equivariantly formal (see Definition \ref{def: eq formal}) over $\text{GL}_n$ for $n\leq 4$ (see \cite{n=3} for $n=3$ and \cite{n=4} for $n=4$), and it is known to be equivariantly formal for all $n$ if $d_i = d$ for all $i$ \cite{GKM purity}. But it is not known over $\text{GL}_n$ in general. It would be sufficient to know that the homology of $\Sp_{\gamma}$ is supported in even degrees, and we conjecture that this is the case for all $n$. We will need to assume that $\Sp_{\gamma}$ is equivariantly formal in order to calculate its homology.

We consider $H_*^T(\Sp_\gamma)$ as a module over $$H^*_T(\text{pt})\otimes \C[\Z^n] \cong \C[t_1, \dots, t_n, x_1^{\pm}, \dots, x_n^{\pm}] = R.$$

\begin{thm}
\label{thm: J}
    Consider $\gamma$ as in \eqref{gamma def}. Define the ideal 
         $$\cJ \subseteq R$$
        $$\cJ =  \bigcap_{i < j} (t_i-t_j, x_i - x_j)^{d_{ij}}$$

    with $d_{ij} = \text{min}(d_i, d_j)$. If $\Sp_{\gamma}$ is equivariantly formal, then as $R$-modules,   
        $$\Delta H_*^T(\Sp_{\gamma}) \cong \cJ,$$
        where $\Delta = \prod_{i < j} (t_i-t_j)^{d_{ij}}.$
    \end{thm}
Note that multiplication by $\Delta$ is injective, so 
$$H_*^T(\Sp_{\gamma}) \cong \Delta H_*^T(\Sp_{\gamma}) \cong \cJ$$
as $R$-modules. It can be useful to keep track of $\Delta$ if we want to retain the localization information of $H_*^T(\Sp_{\gamma})$, but we can omit $\Delta$ when we only care about $H_*^T(\Sp_{\gamma})$ as an $R$-module.

The rough outline of the proof of Theorem \ref{thm: J} is as follows:

\begin{enumerate}
    \item Take a codimension one subtorus $T' \subseteq T$. The $T'$-fixed points of $\Sp_\gamma$ are essentially isomorphic to an affine Springer fiber $\Sp_{\Tilde{\beta}}$ with $\Tilde{\beta} \in \mathfrak{gl}_2$ whose homology is known.
    
    \item Relate the homology of $\Sp_{\Tilde{\beta}}$ to that of $\Sp_\gamma$ using Lemma \ref{brion}.
    
    \item Take enough subtori $T'$ and piece together their homologies to find the homology of $\Sp_{\gamma}$.

\end{enumerate}

Step 3 will require the assumption that $\Sp_{\gamma}$ is equivariantly formal. 
\begin{lemma}
\label{lem: T' fixed points}
    If $T' \subseteq T$ is the subtorus cut out by $t_i=t_j$, then up to $\Z^{n-2}$, the $T'$-fixed points of $\Sp_{\gamma}$ are isomorphic to an affine Springer fiber over $\text{GL}_2$,
    $$
    \Sp_{\gamma}^{T'} \cong \Sp_{\beta_{ij}} \times \Z^{n-2},
    $$         
    where   
    $$
    \beta_{ij} = 
    \begin{pmatrix}
    z_i t^{d_i} & 0\\
    0 & z_j t^{d_j}
    \end{pmatrix}.
    $$ 
\end{lemma}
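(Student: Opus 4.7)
The plan is to combine the description of $T'$-fixed points of $\Gr(\Gln)$ from the preceding corollary with a direct calculation of the condition $g^{-1}\gamma g \in \gln(\cO)$ on such a fixed point. By Lemma~\ref{lem: T' fixed Gr}, every $T'$-fixed point is represented uniquely by $g = DU$ with $D = \diag(t^{m_1},\ldots,t^{m_n})$ and $U = I + \chi_{ij}E_{ij}$, where $E_{ij}$ is the elementary matrix with a $1$ in position $(i,j)$ and $\chi_{ij}$ has only strictly negative powers of $t$ (per the reduced Iwasawa form). The preceding corollary identifies such data with $\Gr(\mathrm{GL}_2)\times\Z^{n-2}$, the $\Gr(\mathrm{GL}_2)$ factor recording the triple $(m_i,m_j,\chi_{ij})$ and the $\Z^{n-2}$ factor recording the remaining $m_k$'s for $k\neq i,j$.

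Next I would compute the integrality condition on such a representative. Because $D$ and $\gamma$ are both diagonal, they commute, and hence $g^{-1}\gamma g = U^{-1}\gamma U$. A direct computation using $\gamma E_{ij} = z_i t^{d_i}E_{ij}$, $E_{ij}\gamma = z_j t^{d_j}E_{ij}$, and $E_{ij}^2 = 0$ gives
$$U^{-1}\gamma U \;=\; \gamma \;+\; \chi_{ij}\bigl(z_i t^{d_i}-z_j t^{d_j}\bigr)E_{ij}.$$
Since $\gamma \in \gln(\cO)$ already (as all $d_k\geq 0$), this lies in $\gln(\cO)$ if and only if $\chi_{ij}(z_i t^{d_i}-z_j t^{d_j})\in\cO$, a condition depending only on $\chi_{ij}$ and none of the $m_k$'s.

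Finally, the identical computation applied to $\beta_{ij}\in\mathfrak{gl}_2$ and its reduced Iwasawa form $\tilde g = \diag(t^{m_i},t^{m_j})\cdot(I+\chi_{ij}E_{12})$ shows that membership in $\Sp_{\beta_{ij}}$ is governed by the same condition $\chi_{ij}(z_i t^{d_i}-z_j t^{d_j})\in\cO$; since every element of $\Gr(\mathrm{GL}_2)$ has such a reduced Iwasawa form, this identifies the intersection of the $\Gr(\mathrm{GL}_2)$ factor with $\Sp_{\gamma}$ as precisely $\Sp_{\beta_{ij}}$. The $\Z^{n-2}$ factor is imposed with no additional constraint, yielding the claimed decomposition $\Sp_{\gamma}^{T'}\cong\Sp_{\beta_{ij}}\times\Z^{n-2}$. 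The argument is essentially bookkeeping; the main point to verify cleanly is that the commutator $[\gamma,\chi_{ij}E_{ij}]$ captures all of the deviation of $U^{-1}\gamma U$ from $\gamma$ (which it does because $E_{ij}^2 = 0$ kills the higher-order term) and that this obstruction is independent of the $m_k$'s (which follows from $D$ and $\gamma$ commuting), so no serious obstacle is anticipated.
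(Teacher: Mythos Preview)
Your proposal is correct and follows essentially the same approach as the paper: use the description of $\Gr^{T'}$ from Lemma~\ref{lem: T' fixed Gr}, compute $g^{-1}\gamma g = U^{-1}\gamma U = \gamma + \chi_{ij}(z_i t^{d_i}-z_j t^{d_j})E_{ij}$, and observe that the resulting integrality condition is exactly the defining condition for $\Sp_{\beta_{ij}}$. Your write-up is a touch more explicit about the elementary-matrix bookkeeping (why $D$ drops out and why $E_{ij}^2=0$ kills the quadratic term), but the argument is the same.
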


\begin{proof}
In Lemma \ref{lem: T' fixed Gr} we've already characterized the $T'$-fixed points of $\Gr$ as $DU$, where $U$ is unipotent with only a single nonzero $\chi_{ij}$. We just need to check which of those fixed points are in $\Sp_{\gamma}$. If $g \in \Gr(\Gln)^{T'}$, then

$$g^{-1} \gamma g = 
\begin{pmatrix}
    z_1t^{d_1} &  & \chi_{ij}(z_i t^{d_i} - z_j t^{d_j})\\
    & \ddots & \\
    0 & & z_nt^{d_n}
\end{pmatrix}.
$$
Again looking at the $2\times 2$ $i,j$ submatrix, we see a matrix identical to $g^{-1} \beta_{ij} g$. So a $T'$-fixed point $g$ is in $\Sp_{\gamma}$ if and only if the $2\times 2$ matrix
$$\begin{pmatrix}
    t^{m_i} & \chi_{ij}\\
    0 & t^{m_j}
\end{pmatrix}$$
is in $\Sp_{\beta_{ij}}$, i.e.
$
\Sp_{\gamma} \cong \Sp_{\beta_{ij}} \times \Z^{n-2}.
$
\end{proof}


\begin{lemma}
\label{lem: beta tilde}
    Given $\beta_{ij} \in \mathfrak{gl}_2$ as in Theorem 1, we have $\Sp_{\beta_{ij}} \cong \Sp_{\tilde{\beta}_{ij}},$ where 
    
    $$\tilde{\beta}_{ij} = \begin{pmatrix}
    z_i t^{d_{ij}} & 0\\
    0 & z_j t^{d_{ij}}
    \end{pmatrix}
    $$
and $d_{ij} = \text{min}(d_i, d_j)$.
\end{lemma}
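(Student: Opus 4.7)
The plan is to prove the stronger statement that $\Sp_{\beta_{ij}}$ and $\Sp_{\tilde\beta_{ij}}$ are literally equal as subsets of $\Gr(\mathrm{GL}_2)$, not merely isomorphic. Concretely, I would show that for every lattice $\Lambda \subseteq \cK^2$, the condition $\beta_{ij} \Lambda \subseteq \Lambda$ is equivalent to $\tilde\beta_{ij} \Lambda \subseteq \Lambda$. Without loss of generality, assume $d_i \leq d_j$, so $d_{ij} = d_i$; the case $d_i = d_j$ is trivial, and the reversed inequality is symmetric.

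The first step is a reduction using the general principle that for any $\gamma \in \gln(\cK)$ and any $a \in \cO$, the scalar matrix $aI$ preserves every lattice, so $\gamma\Lambda \subseteq \Lambda$ is equivalent to $(\gamma + aI)\Lambda \subseteq \Lambda$. Subtracting $z_i t^{d_i} I \in \gln(\cO)$ from both matrices reduces the problem to verifying
$$t^{d_i} u E \cdot \Lambda \subseteq \Lambda \;\Longleftrightarrow\; t^{d_i} v E \cdot \Lambda \subseteq \Lambda,$$
where $E = \diag(0,1)$, $u = z_j t^{d_j - d_i} - z_i$, and $v = z_j - z_i$. Both $u$ and $v$ lie in $\cO^\times$: $u$ has nonzero constant term $-z_i$, and $v$ is a nonzero scalar because the $z_k$ are pairwise distinct.

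The key observation is that $E\Lambda$ is an $\cO$-submodule of $\cK^2$ (being the image of $\Lambda$ under the $\cO$-linear idempotent $E$), so it is invariant under multiplication by any unit of $\cO$. Hence $t^{d_i} u E\Lambda = t^{d_i} E\Lambda = t^{d_i} v E\Lambda$, which gives the equivalence above and therefore $\Sp_{\beta_{ij}} = \Sp_{\tilde\beta_{ij}}$. There is no substantive obstacle here: the entire argument rests on two elementary facts, namely that scalar matrices with entries in $\cO$ preserve every lattice and that units in $\cO$ act trivially on $\cO$-submodules of $\cK^n$, so I expect the proof to amount to little more than the reduction and the one-line computation displayed above.
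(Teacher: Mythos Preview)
Your argument is correct and in fact shows, as you note, that the two affine Springer fibers are equal as sub-ind-schemes of $\Gr(\mathrm{GL}_2)$, which is exactly what the paper also obtains. The route, however, is genuinely different. The paper parametrizes cosets in $\Gr(\mathrm{GL}_2)$ via the Iwasawa decomposition $g=DU$, computes the off-diagonal entry of $g^{-1}\beta_{ij}g$ to be $\chi_{ij}(\gamma_i-\gamma_j)$, and observes that the membership condition $g\in\Sp_{\beta_{ij}}$ becomes $\nu(\chi_{ij})\ge -d_{ij}$, which depends only on $d_{ij}=\nu(\gamma_i-\gamma_j)$ and hence coincides with the condition for $\tilde\beta_{ij}$. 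Your argument is coordinate-free: you shift both matrices by the scalar $z_i t^{d_i}I\in\mathfrak{gl}_2(\cO)$, which does not change the Springer fiber, and reduce to the rank-one case $t^{d_i}uE$ versus $t^{d_i}vE$ with $u,v\in\cO^{\times}$; then the observation that $E\Lambda$ is an $\cO$-submodule gives $uE\Lambda=E\Lambda=vE\Lambda$ immediately. Your approach is cleaner and visibly generalizes (any two diagonal elements whose entries differ pairwise by units of $\cO$ have identical Springer fibers), while the paper's computation in Iwasawa coordinates dovetails with the explicit description of $T'$-fixed points used just before and with the $T/T'$-equivariance noted in the subsequent remark.
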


\begin{proof}
    Again using the Iwasawa decomposition, write $g = DU$, where 
    $$U = 
    \begin{pmatrix}
        1 & \chi_{ij}\\
        0 & 1
    \end{pmatrix}.$$ 
    Then,
    $$g^{-1}\gamma g = 
    \begin{pmatrix}
        \gamma_i & \chi_{ij}(\gamma_i - \gamma_j)\\
        0 & \gamma_j
    \end{pmatrix}.
    $$
By definition, $g \in \Sp_{\beta_{ij}}$ if and only if $\chi_{ij}(\gamma_i - \gamma_j) \in \cO$. Since we assume that the $z_i$ are distinct, $\nu(\gamma_i - \gamma_j) = \min(\nu(\gamma_i), \nu(\gamma_j)) = \min(d_i, d_j) = d_{ij}$. So $g \in \Sp_{\beta_{ij}}$ if and only if $\chi_{ij}$ has order at least $-d_{ij}$. This is the same as the condition for $g$ to be in $\Sp_{\Tilde{\beta}_{ij}}$, since
    $$g^{-1} \Tilde{\beta}_{ij} g = 
    \begin{pmatrix}
        z_i t^{d_{ij}} & (z_i-z_j)\chi_{ij} t^{d_{ij}}\\
        0 & z_j t^{d_{ij}}
    \end{pmatrix}.$$
\end{proof}

\begin{rem}
\label{rem: beta tilde equivariant}
The one-dimensional quotient torus $T/T'$ naturally acts on $\Sp_{\gamma}^{T'}$. On the other hand, $T/T'$ is isomorphic to the one-dimensional torus $(\C^*)^2/\C^*$ which acts on $\Sp_{\beta_{ij}}$ and on $\Sp_{\tilde{\beta}_{ij}}$.
The isomorphisms
constructed in Lemmas \ref{lem: T' fixed points} and \ref{lem: beta tilde} are $T/T'$-equivariant.
\end{rem}

The homology for the affine Springer fiber of matrices like $\tilde{\beta}_{ij}$ with all powers the same is known. It is found using GKM theory by Kivinen in \cite{Kivinen}.

\begin{thm}[Kivinen \cite{Kivinen}]
\label{thm: Kivinen}
    If
    $$\gamma = 
        \begin{pmatrix}
         z_1t^{d} &  & 0\\
         & \ddots & \\
        0 & & z_nt^{d}
        \end{pmatrix},$$
    then $\Delta H_*^T(\Sp_{\gamma})$ injects into 
    $$H^*_T(\text{pt}) \otimes \C[\Z^n] \cong \C[t_1, \dots, t_n, x_1^{\pm}, \dots, x_n^{\pm}],$$
    where 
    $\Delta = \prod_{i < j} (t_i-t_j)^d.$ As a submodule, there is a canonical isomorphism 
    $$\Delta H_*^T(\Sp_{\gamma}) \cong \prod_{i < j} (t_i-t_j, x_i-x_j)^{d}.$$
\end{thm}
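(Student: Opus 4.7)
My approach is to apply equivariant localization in the spirit of Goresky-Kottwitz-MacPherson. By the purity result of \cite{GKM purity}, $\Sp_{\gamma}$ is equivariantly formal in the equal-exponent case, so Brion's lemma (Lemma \ref{brion}) yields an injection $H_*^T(\Sp_{\gamma}) \hookrightarrow H_*^T(\Sp_{\gamma}^T) \cong \C[\mathbf{t}] \otimes \C[\Lambda]$ via restriction to the discrete lattice of $T$-fixed points (Lemma \ref{gamma fixed pts}). Multiplying by $\Delta = \prod_{i<j}(t_i-t_j)^d$ lands the image cleanly in the target ring, and the task reduces to identifying this image with the prescribed ideal.

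To pin down the image, I would exploit the codimension-one subtori $T_{ij}=\{t_i=t_j\}$ one pair at a time. By Lemmas \ref{lem: T' fixed points} and \ref{lem: beta tilde}, $\Sp_{\gamma}^{T_{ij}} \cong \Sp_{\tilde\beta_{ij}} \times \Z^{n-2}$, where $\tilde\beta_{ij}$ is a $\mathrm{GL}_2$-element with the same equal exponents $d$. A second application of Brion's lemma, localized at $t_i-t_j$, forces the image of $H_*^T(\Sp_{\gamma})$ to lie inside the image of $H_*^T(\Sp_{\gamma}^{T_{ij}})$; the $\Z^{n-2}$ factor contributes only the free variables $x_k^{\pm}$ for $k\ne i,j$, so it suffices to compute each $\mathrm{GL}_2$ building block and then assemble the resulting conditions into a single ideal. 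A short commutative-algebra verification (or comparison with Haiman's theorem $J'(d,\dots,d)=J'(1,\dots,1)^d$) would then confirm that the combined condition is precisely the product $\prod_{i<j}(t_i-t_j, x_i-x_j)^d$.

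The technical heart is the $\mathrm{GL}_2$ base case for $\tilde\beta = \mathrm{diag}(z_1 t^d, z_2 t^d)$. Using the Iwasawa decomposition, I would present $\Sp_{\tilde\beta}$ as an infinite union of affine cells $\C^d$ indexed by $(m_1,m_2)\in\Z^2$, on each of which the quotient torus $T/T'\cong \C^*$ acts with the single character $t_1-t_2$; the equivariant Euler class of the normal bundle to each $T$-fixed point is thus $(t_1-t_2)^d$. An Atiyah-Bott type computation should then identify $(t_1-t_2)^d\cdot H_*^T(\Sp_{\tilde\beta})$ with the ideal $(t_1-t_2, x_1-x_2)^d$ inside $\C[t_1,t_2,x_1^\pm,x_2^\pm]$. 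The \emph{main obstacle} is that for $d\ge 2$ the one-dimensional $T$-orbits do not form a discrete set of $\PP^1$'s (the tangent weight at every fixed point is the same), so vanilla GKM does not apply; one must combine the affine paving with Brion's lemma, or use the refined equivariant technology developed by Chen \cite{Chen}. Finally, surjectivity onto the ideal can be checked by a Hilbert-series match, comparing the Poincaré polynomial of $\Sp_{\gamma}$ (obtained from the cell paving) with the graded character of the target ideal.
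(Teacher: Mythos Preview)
This theorem is quoted from Kivinen \cite{Kivinen} and is not proved in the present paper; it is used as a black box (via Corollary \ref{cor: ideal n=2}) in the proof of the more general Theorem \ref{thm: J}. So there is no ``paper's own proof'' to compare against.

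That said, your proposal is worth commenting on because it essentially recapitulates the paper's proof of Theorem \ref{thm: J} in the equal-exponent special case: the reduction via codimension-one subtori $T_{ij}$ and Brion's lemma to $\mathrm{GL}_2$ pieces, followed by assembly of the local conditions into a single ideal, is exactly the strategy of Section 3. The paper then \emph{invokes} Kivinen's theorem to handle the $\mathrm{GL}_2$ base case, whereas you propose to prove that base case directly via the affine paving and an Euler-class computation. In other words, your argument is not so much an alternative route as an unwinding of the cited reference down to its $n=2$ core; the genuinely new content you would need to supply is precisely the rank-two computation you flag as the ``technical heart''. Your diagnosis there is accurate: for $d\ge 2$ the one-dimensional orbit structure is non-GKM, and Kivinen's actual argument combines the paving with a careful analysis of the equivariant classes rather than a naive moment-graph picture. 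One small caution: to pass from ``contained in each $(t_i-t_j,x_i-x_j)^d$ after localization'' to equality with the full product/intersection, the paper uses Hartogs' lemma (Lemma \ref{lem: Hartogs}) together with freeness over $H_T^*(\mathrm{pt})$; your ``short commutative-algebra verification'' should be exactly that step.
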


\begin{cor}
\label{cor: ideal n=2}
    We have the following canonical isomorphism of $\C[t_i-t_j, x_i^{\pm}, x_j^{\pm}]$-modules: 
    $$(t_i-t_j)^{d_{ij}} H_*^{T/T'}(\Sp_{\tilde{\beta}_{ij}}) \cong (t_i-t_j, x_i-x_j)^{d_{ij}}\subseteq\C[t_i-t_j, x_i^{\pm}, x_j^{\pm}].$$
Here $T/T'$ acts on $\Sp_{\tilde{\beta}_{ij}}$  as in Remark \ref{rem: beta tilde equivariant}.
\end{cor}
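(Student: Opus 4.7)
The strategy is to reduce the statement to Theorem \ref{thm: Kivinen} applied to the $2 \times 2$ matrix $\tilde{\beta}_{ij}$, and then descend from the full torus $T$ to the one-dimensional quotient $T/T'$. Since both diagonal entries of $\tilde{\beta}_{ij}$ have valuation $d_{ij}$, Kivinen's theorem applies directly with $n = 2$ and $d = d_{ij}$, giving a canonical isomorphism of $\C[t_i, t_j, x_i^{\pm}, x_j^{\pm}]$-modules
$$(t_i - t_j)^{d_{ij}} H_*^T(\Sp_{\tilde{\beta}_{ij}}) \cong (t_i - t_j, x_i - x_j)^{d_{ij}} \subseteq \C[t_i, t_j, x_i^{\pm}, x_j^{\pm}].$$

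Next, I would show that $T' = \{(\lambda,\lambda) : \lambda \in \C^*\} \subseteq T$ acts trivially on $\Sp_{\tilde{\beta}_{ij}}$: any element of $T'$ is a scalar matrix $\lambda I$ lying in $\Gln(\cO)$, so the action $g \mapsto \lambda g = g \cdot \lambda I$ leaves the coset in $\Gln(\cK)/\Gln(\cO)$ unchanged. Because $T'$ acts trivially, the Borel construction gives a homotopy equivalence that yields a splitting
$$H_*^T(\Sp_{\tilde{\beta}_{ij}}) \cong H_*^{T/T'}(\Sp_{\tilde{\beta}_{ij}}) \otimes_\C H^*_{T'}(\mathrm{pt}),$$
compatible with the decomposition $\C[t_i, t_j] \cong \C[t_i - t_j] \otimes_\C \C[t_i]$, where $t_i - t_j$ is the $T/T'$-parameter and $t_i$ (equivalently $t_j$) restricts to the $T'$-parameter.

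Finally, I would exploit the fact that the Kivinen ideal $(t_i - t_j, x_i - x_j)^{d_{ij}}$ manifestly does not involve the $T'$-parameter: as an ideal it is the extension of $(t_i - t_j, x_i - x_j)^{d_{ij}} \subseteq \C[t_i - t_j, x_i^{\pm}, x_j^{\pm}]$ along the flat inclusion $\C[t_i - t_j, x_i^{\pm}, x_j^{\pm}] \hookrightarrow \C[t_i, t_j, x_i^{\pm}, x_j^{\pm}]$, so it factors as a tensor product over $\C$ with $\C[t_i]$. Matching the two tensor-product factorizations and cancelling $H^*_{T'}(\mathrm{pt}) = \C[t_i]$ yields the desired canonical isomorphism of $\C[t_i - t_j, x_i^{\pm}, x_j^{\pm}]$-modules.

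The main obstacle is the bookkeeping in the second step: verifying that the splitting of $H_*^T$ arising from the trivial $T'$-action is compatible with both the $T$-module structure appearing in Kivinen's theorem and the ring isomorphism $\C[t_i, t_j] \cong \C[t_i - t_j] \otimes \C[t_i]$. Once this compatibility is in place, the rest is a routine computation, since the ideal has no dependence on the $T'$-direction.
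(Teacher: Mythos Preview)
Your argument is correct and is essentially what the paper intends: the corollary is stated without proof as an immediate consequence of Theorem~\ref{thm: Kivinen} specialized to $n=2$, and you have simply spelled out the descent from the full $(\C^*)^2$-equivariant homology to the one-dimensional quotient using the trivial action of the diagonal. One notational caution: in the corollary the symbols $T$ and $T'$ refer to the $n$-dimensional torus and its codimension-one subtorus $\{t_i=t_j\}$ (cf.\ Remark~\ref{rem: beta tilde equivariant}), not to $(\C^*)^2$ and its diagonal as in your write-up, though the quotient $T/T'$ is the same one-dimensional torus under both readings.
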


 Now in order to piece together these homologies of $\Sp_{\tilde{\beta}_{ij}}$, we use the following fact:

\begin{lemma}[Algebraic Hartogs' lemma]
\label{lem: Hartogs}
    If $A$ is an integrally closed Noetherian integral domain, then

        $$A = \bigcap_{p \text{ codimension 1}} A_p,$$
        where we take the intersection over all codimension 1 prime ideals of $A$ of $A_p$ inside the fraction field Frac($A$).
    If $M$ is a free $A$-module, then we also have
    $$M = \bigcap_{p \text{ codimension 1}} M_p,$$
    where the intersection is taken inside $M \otimes_A \text{Frac}(A)$.
\end{lemma}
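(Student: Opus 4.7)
The plan is to prove the ring-theoretic statement first and then deduce the module statement by working coefficient-by-coefficient in a chosen basis.

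For the first equality, since $A$ is a domain, it embeds in its fraction field $K$ and every localization $A_p$ sits inside $K$ as well, so the inclusion $A \subseteq \bigcap_p A_p$ is immediate. For the reverse inclusion I would take $x = b/a \in K$ with $a,b \in A$ and $a \neq 0$, assume $x \in A_p$ for every height-one prime $p$, and show that $a$ divides $b$ in $A$. The key input is Serre's criterion for normality: a Noetherian domain is integrally closed if and only if it satisfies $R_1$ and $S_2$, and $S_2$ forces every nonzero principal ideal $(a)$ to have no embedded associated primes. Combined with Krull's principal ideal theorem, this makes every associated prime of $A/(a)$ of height exactly one. Fix a primary decomposition $(a) = q_1 \cap \cdots \cap q_m$ with $\sqrt{q_i} = p_i$ of height one. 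The hypothesis $x \in A_{p_i}$ gives $b \in a A_{p_i}$, and since $q_i$ is $p_i$-primary, $b \in q_i A_{p_i} \cap A = q_i$ for each $i$. Intersecting yields $b \in (a)$, i.e.\ $x \in A$.

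For the module statement, fix an $A$-basis $\{e_i\}_{i \in I}$ of $M$, so $M$ embeds into $M \otimes_A K = \bigoplus_i K \cdot e_i$. Because localization commutes with direct sums, $M_p = \bigoplus_i A_p \cdot e_i$ sits inside this same $K$-vector space, and an element $m = \sum c_i e_i$ lies in $M_p$ if and only if each $c_i$ lies in $A_p$. Intersecting over all height-one primes and applying the ring case coefficient-by-coefficient gives $c_i \in A$ for every $i$, hence $m \in M$.

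The only nontrivial ingredient is the $S_2$ statement, that for a Noetherian normal domain every nonzero principal ideal has no embedded components; this is standard and can either be cited directly from Serre's criterion or derived from the equivalence between Noetherian integrally closed domains and (Noetherian) Krull domains, in which case the conclusion $A = \bigcap_p A_p$ is literally part of the definition. There is no serious obstacle beyond invoking that classical fact.
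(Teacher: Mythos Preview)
Your proof is correct. The paper does not actually supply its own proof of this lemma: it is stated as a known fact (introduced with ``we use the following fact'') and invoked without argument, so there is nothing in the paper to compare against. Your route via Serre's criterion---using $S_2$ to ensure that all associated primes of a nonzero principal ideal have height one, then pulling back through a primary decomposition---is one of the standard arguments, and your reduction of the free-module case to the ring case by working coefficientwise in a basis is exactly right.
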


Since we assumed that $\Sp_{\gamma}$ is equivariantly formal, $H^T_*(\Sp_{\gamma})$ is free over $H^*_T(pt)$. Now we can prove Theorem \ref{thm: J}.

\begin{proof}[Proof of Theorem \ref{thm: J}]
    By Lemma \ref{brion}, we have that, up to localization away from $(t_i-t_j)$,
        
    $$H^T_*(\Sp_{\gamma}) \cong_{loc} H^T_*(\Sp_{\gamma}^{T'}) \cong H_*^{T/T'}\left(\Sp_{\gamma}^{T'}\right)\otimes H^*_{T'}(pt)= H^{T/T'}_*(\Sp_{\tilde{\beta}_{ij}} \times \Z^{n-2})\otimes H^*_{T'}(pt).$$

Here $\cong_{loc}$ indicates an isomorphism after localization. Note that after localization, $\Delta=(t_i-t_j)^{d_{ij}}$ up to an invertible factor, so by Lemma \ref{lem: beta tilde} we get
$$
\Delta H^T_*(\Sp_{\gamma}) \cong_{loc}(t_i-t_j)^{d_{ij}} H_*^{T/T'}\left(\Sp_{\widetilde{\beta}_{i,j}}\right)\otimes H^*_{T'}(pt)\otimes \C[\Z^{n-2}]=
(t_i-t_j, x_i-x_j)^{d_{ij}}\subseteq\C[\mathbf{t}, \mathbf{x}^{\pm}].
$$
We have inclusion map
$$
\Sp_{\gamma}^{T'}\xrightarrow{i} \Sp_{\gamma}
$$

and by Lemma \ref{brion}
$$\prod_{(k,\ell)\neq (i,j)}(t_k-t_{\ell})H_*^T(\Sp_{\gamma})\subseteq i_*H_*^T(\Sp_{\gamma}^{T'}).
$$

We also have that
$$
\Delta H_*^T(\Sp_{\gamma})\subseteq \prod_{(k,\ell)\neq (i,j)}(t_k-t_{\ell})(t_i-t_j)^{d_{ij}}H_*^T(\Sp_{\gamma}).
$$

By the above, this is contained in 
$$
i_{*}(t_i-t_j)^{d_{ij}}H_*^T(\Sp_{\gamma}^{T'})=
(t_i-t_j, x_i-x_j)^{d_{ij}}.
$$
We conclude that
$$\Delta H^T_*(\Sp_{\gamma}) \subseteq(t_i-t_j, x_i-x_j)^{d_{ij}}.$$


This holds for all codimension-1 subtori $T'=(t_i-t_j) \subseteq T$ with $i < j$, so we have

    $$\Delta H^T_*(\Sp_{\gamma}) \subseteq \bigcap_{i < j} (t_i-t_j, x_i-x_j)^{d_{ij}}.$$

In fact we have already seen that $(t_i-t_j, x_i-x_j)^{d_{ij}}$ is exactly the localization $H^T_*(\Sp_{\gamma})_{p}$ where $p = (t_i-t_j)$.

So by Lemma \ref{lem: Hartogs} we conclude that 

    $$\Delta H^T_*(\Sp_{\gamma}) \cong \bigcap_{i < j} (t_i-t_j, x_i-x_j)^{d_{ij}}.$$
\end{proof}

This proof required the assumption that $\Sp_{\gamma}$ is equivariantly formal. If $d_i=d$ for all $i$, then it is known to be equivariantly formal \cite{GKM purity} and we recover the homology result of Theorem \ref{thm: Kivinen} from Kivinen \cite{Kivinen}.

\begin{conj}
    $\Sp_{\gamma}$ is equivariantly formal for all $d_1,\ldots,d_n$ and all $n$.
\end{conj}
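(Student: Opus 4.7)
This is a substantial open problem (a refinement of the GKM purity conjecture), so my plan outlines a strategy rather than a short proof. The approach would be to construct a $T$-invariant affine paving of $\Sp_\gamma$; equivariant formality then follows from the standard fact that a paving by affine cells produces Borel--Moore homology concentrated in even degrees and free over $H^*_T(\mathrm{pt})$. Because the lattice $\Lambda = \Z^n$ acts freely by translations, it suffices to pave the fundamental domain $F_\gamma = \Sp_\gamma / \Lambda$, which is projective, and then propagate the paving equivariantly.

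First I would enumerate candidate cells via the Iwasawa decomposition used throughout Section 2: each point is uniquely $DU$ with $D = \diag(t^{m_1}, \ldots, t^{m_n})$ and $U$ unipotent with entries $\chi_{ij} \in t^{-1}\C[t^{-1}]$ above the diagonal, and the condition $g^{-1}\gamma g \in \gln(\cO)$ imposes explicit polynomial constraints on the Laurent coefficients of the $\chi_{ij}$'s. The second step, and the heart of the argument, would be to show that each resulting locus is an affine space. The strategy is to order the entries $(i,j)$ (for instance, anti-lexicographically from the top-right corner inward) and solve the constraints triangularly, so that once previously ordered entries are fixed the new entry contributes only linear conditions on its coefficients. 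This would recover, and match, Chen's cell decomposition discussed in the appendix, specializing to the known arguments of \cite{GKM purity} in the uniform case $d_i = d$ and of \cite{n=3}, \cite{n=4} for $n \leq 4$.

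The main obstacle is exactly this triangularity claim in the non-uniform case at general rank: when the $d_i$ are genuinely mixed, the orders $d_{ij} = \min(d_i, d_j)$ vary across pairs, and constraints indexed by different $(i,j)$ can couple in the same Laurent degree, so the ad hoc rank-by-rank arguments for $n=3,4$ do not generalize in an obvious way and serious combinatorial bookkeeping is needed. In parallel I would pursue a second, more algebraic route: use Theorem \ref{thm: J} in reverse. If one can compute the Hilbert series of $\cJ$ unconditionally, confirming Conjecture \ref{intro conj: Hilbert}, and match it against the Poincaré polynomial of $\Sp_\gamma$ predicted by $\HHH^{a=0}(L_\gamma)$ as in Conjecture \ref{conj: ORS} and Theorem \ref{thm: intro HHH}, then a dimension count after equivariant localization (using Lemma \ref{brion}) forces the localization map to be an isomorphism onto a free module, and equivariant formality follows. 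This route essentially trades a geometric cell-construction problem for the purely combinatorial problem of proving the Hilbert-series formula, which may be more tractable recursively in $n$.
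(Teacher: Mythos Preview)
The paper does not prove this statement: it is recorded as a conjecture immediately after Theorem \ref{thm: J}, with no argument offered beyond citing the known cases $n\le 4$ (\cite{n=3}, \cite{n=4}) and the equivalued case $d_i=d$ (\cite{GKM purity}). So there is no paper proof to compare against, and you correctly identify the statement as an open problem and present a strategy rather than a proof.

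Your first route---constructing a $T$-invariant affine paving via the Iwasawa coordinates---is exactly the method used in the cited small-rank and equivalued results, and the obstacle you name (loss of triangularity when the $d_{ij}$ vary) is the genuine one. Your second route, however, has a circularity risk worth flagging. Theorem \ref{thm: J} only identifies $\Delta H_*^T(\Sp_\gamma)$ with $\cJ$ \emph{assuming} equivariant formality; without that assumption Lemma \ref{brion} gives only the containment $\Delta H_*^T(\Sp_\gamma)\subseteq \cJ$ after localization, not an identification of ranks. To run the dimension count you would need an independent computation of $H_*(\Sp_\gamma)$, and the candidate you propose---$\HHH^{a=0}(L_\gamma)$ via Conjecture \ref{conj: ORS}---is itself open in general (the paper proves it only for $n=3$, and that proof already uses equivariant formality for $n=3$). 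So the algebraic route as stated would trade one open conjecture for another rather than reduce to a purely combinatorial problem.
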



\section{Generalized Haiman Ideal \texorpdfstring{$J$}{J} for \texorpdfstring{$n=3$}{n=3}}
\label{sec: n=3}
When $n=3$, it is known that $\Sp_{\gamma}$ is equivariantly formal, so by Theorem \ref{thm: J}, up to denominators ($\Delta)$, its equivariant Borel-Moore homology is isomorphic to the ideal
$\mathcal{J} \subseteq \C[t_1, t_2, t_3, x_1^{\pm}, x_2^{\pm}, x_3^{\pm}]$ defined as
$$\mathcal{J}= \mathcal{J}(d_1, d_2) = (t_1-t_2,x_1-x_2)^{d_1} \cap (t_1-t_3,x_1-x_3)^{d_1} \cap (t_2-t_3,x_2-x_3)^{d_2}$$
seen as a module over $\C[t_1, t_2, t_3, x_1^{\pm}, x_2^{\pm}, x_3^{\pm}].$ Here we are assuming that $d_1 \leq d_2 \leq d_3$, so that $d_1,d_1,d_2$ are equal to the pairwise minima $d_{ij}$.  

We consider a similar ideal $J' \subset\C[t_1, t_2, t_3, x_1, x_2, x_3]$
$$J' = J'(d_1, d_2) = (t_1-t_2,x_1-x_2)^{d_1} \cap (t_1-t_3,x_1-x_3)^{d_1} \cap (t_2-t_3,x_2-x_3)^{d_2}$$

It is easy to see that $\mathcal{J}=J'\otimes_{\C[\mathbf{x}]}\C[\mathbf{x}^{\pm}]$, so
the generators for $\mathcal{J}$ over $\C[\bf{t}, \mathbf{x}^{\pm}]$ will be the same as generators of $J'$ over the polynomial ring $\C[\mathbf{t}, \mathbf{x}]$. 
Next we will do a change of variables:
$$a = t_1-t_2,\ b = x_1-x_2,\ c = t_3-t_2,\ d = x_3-x_2$$
and consider the ideal
$$J=J(d_1,d_2) = (a,b)^{d_1} \cap (c,d)^{d_2} \cap (a-c, b-d)^{d_1}$$
over $R= \C[a,b,c,d]$.
Clearly, we get
$$
J'(d_1,d_2)=J(d_1,d_2)\otimes_{\C}\C[x_1+x_2+x_3,t_1+t_2+t_3]
,$$
so again all three ideals have the same generators up to this change of variables.

We can also consider these as bigraded ideals, where the $t_i$'s (or $a$ and $c$) have bidegree $q$, and the $x_i$'s ($b$ and $d$) have bidegree $t$.

We will frequently use the polynomial
$$
ad-bc=d(a-c)-c(b-d)\in (a,b)\cap (c,d)\cap (a-c,b-d).
$$

\begin{thm}
\label{thm: generators}
    The ideal $J = (a,b)^{d_1} \cap (c,d)^{d_2} \cap (a-c,b-d)^{d_1}$ over $R = \C[a,b,c,d]$ has the following families of generators $(0 \leq j \leq d_1)$:
    \begin{enumerate}
        \item $A_{i,j}=a^{d_1-j}c^{d_2-j}(a-c)^{i}(b-d)^{d_1-j-i}(ad-bc)^{j},1\le i\le d_1-j$. These generators have bidegree $q^{d_1+d_2-j+i}t^{d_1-i}$, and there are $d_1-j$ of these for a fixed $j$. They are characterized by $\deg_t<d_1$.

        \item $B_{i,j}=a^{d_1-j-i}b^{i}d^{d_2-j}(b-d)^{d_1-j}(ad-bc)^{j},1\le i\le d_1-j$. Such generators have bidegree $q^{d_1-i}t^{d_1+d_2-j+i}$, and there are $d_1-j$ of these for a fixed $j$. They are characterized by $\deg_q<d_1$.

        \item $C_{i,j}=a^{d_1-j}c^{i}d^{d_2-j-i}(b-d)^{d_1-j}(ad-bc)^{j},1\le i\le d_2-j$. Such generators have bidegree $q^{d_1+i}t^{d_1+d_2-j-i}$, and there are $d_2-j$ of these for a fixed $j$. They are characterized by $\deg_q > d_1, \deg_t \geq d_1$.

        \item $D_j=a^{d_1-j}d^{d_2-j}(b-d)^{d_1-j}(ad-bc)^{j}$ has bidegree $q^{d_1}t^{d_1+d_2-j}$, there is one such generator for each $j$. They are characterized by $\deg_q=d_1$.
    \end{enumerate}
\end{thm}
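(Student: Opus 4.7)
My plan is to prove the theorem in two stages: first verify that each listed element lies in $J$, then show these elements generate $J$. \textbf{For membership,} the key identity is $ad-bc = d(a-c) - c(b-d) = a\cdot d - b\cdot c$, which places $ad-bc$ simultaneously in $(a,b)$, $(c,d)$, and $(a-c, b-d)$. For each family one reads off from the explicit factorization that the $(a,b)$-order is at least $d_1$, the $(c,d)$-order at least $d_2$, and the $(a-c,b-d)$-order at least $d_1$; for example, $A_{i,j}$ contributes $a^{d_1-j}(ad-bc)^j$ to the first, $c^{d_2-j}(ad-bc)^j$ to the second, and $(a-c)^i (b-d)^{d_1-j-i}(ad-bc)^j$ to the third. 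Bidegrees, cardinalities, and the $(\deg_q,\deg_t)$-characterizations then follow by direct inspection. I would also record the boundary identifications $A_{0,j} = C_{d_2-j,\, j}$ and $B_{0,j} = D_j$, which explain the $i \geq 1$ restrictions in families $A$ and $B$.

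\textbf{For generation} I would induct on $d_1+d_2$. The base case $d_1 = 0$ reduces to $J=(c,d)^{d_2}$, whose monomial generators $c^i d^{d_2-i}$ are precisely $C_{i,0}$ together with $D_0$ (families $A$ and $B$ being empty). The decisive inductive observation is that multiplication by $ad-bc$ shifts $j$ by one,
$$(ad-bc)\cdot X_{i,j}(d_1-1, d_2-1) \;=\; X_{i,\,j+1}(d_1, d_2),\qquad X \in \{A,B,C,D\},$$
with the index ranges matching up exactly. Since $(ad-bc)\cdot J(d_1-1, d_2-1) \subseteq J(d_1, d_2)$, the inductive hypothesis supplies all $j \geq 1$ generators, so it suffices to prove
$$J(d_1, d_2) \;\subseteq\; \bigl\langle A_{\bullet,0},\, B_{\bullet,0},\, C_{\bullet,0},\, D_0\bigr\rangle + (ad-bc)\cdot J(d_1-1, d_2-1).$$

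Because $ad-bc$ has order exactly $1$ in each of the three defining ideals, the colon ideal satisfies $J(d_1,d_2):(ad-bc) = J(d_1-1, d_2-1)$, and therefore $J(d_1,d_2) \cap (ad-bc)R = (ad-bc)\cdot J(d_1-1, d_2-1)$. Consequently the quotient $J(d_1,d_2)/\bigl((ad-bc)\cdot J(d_1-1, d_2-1)\bigr)$ injects into the normal domain $R/(ad-bc)R$, and the reduction step becomes the assertion that the image of $J$ in this quotient is generated by the images of the four $j=0$ families.

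The main obstacle I anticipate is this last step: showing that any $(ad-bc)$-free element $f_0 \in J$ lies in $\langle A_{\bullet,0},\, B_{\bullet,0},\, C_{\bullet,0},\, D_0\rangle$. I would attack it by bidegree — the four $j=0$ families are precisely adapted to the four $(\deg_q,\deg_t)$-regions listed in the theorem statement, and decomposing $f_0$ into bihomogeneous components while tracking the explicit constraints imposed by membership in $(a,b)^{d_1}$, $(c,d)^{d_2}$, and $(a-c,b-d)^{d_1}$ should produce the required expression. The delicate transitional regime near $\deg_q = d_1$, handled by the single generator $D_0$, is where the combinatorial bookkeeping becomes most intricate.
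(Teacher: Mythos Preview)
The membership verification and the boundary identifications $A_{0,j}=C_{d_2-j,j}$, $B_{0,j}=D_j$ are correct. Your inductive reduction via the colon-ideal identity $J(d_1,d_2):(ad-bc)=J(d_1-1,d_2-1)$ is also correct (the initial form of $ad-bc$ in each of the three associated graded rings is a nonzerodivisor), and it is a genuinely different route from the paper's. The paper does not induct on $d_1+d_2$; instead it proves a direct normal-form statement (its Proposition~\ref{prop: basis}) by analyzing how an element of $(a-c,b-d)^{d_1}$ can land in the \emph{monomial} ideal $M=(a,b)^{d_1}\cap(c,d)^{d_2}$. The key device there is a ``cancellation chain'' lemma (Lemma~\ref{lem: key lemma}) showing that in any expression $\varphi(b-d)+\psi(ad-bc)\in M$ one may arrange $\varphi\in M$; iterating this separates the $(ad-bc)^j$-layers term by term, without ever passing to the quotient $R/(ad-bc)$.

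Your gap is exactly where you locate it, and it is not small. Saying you will ``attack by bidegree'' is not an argument: $J$ is already bigraded, so restricting to a single $(q,t)$-bidegree gains nothing, and within a fixed bidegree the constraint ``lies in $(a-c,b-d)^{d_1}$'' is not a monomial condition, so there is no evident mechanism forcing a decomposition along your four regions. Worse, passing to $R/(ad-bc)$ does not simplify matters: the images of $(a,b)^{d_1}$, $(c,d)^{d_2}$, $(a-c,b-d)^{d_1}$ are no longer monomial ideals in a polynomial ring, and you must describe the image of the intersection $J$ rather than the (possibly larger) intersection of the images. In short, your induction peels off the layers $j\ge 1$ cleanly, but the $j=0$ step still requires precisely the kind of term-by-term control over the interaction between $(a-c,b-d)^{d_1}$ and $M$ that the paper obtains from its cancellation lemma; your outline offers no substitute for that work.
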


\begin{rem}
\label{rem: j=d1}
For $j=d_1$, the generators $A_{i,j}$ and $B_{i,j}$ are not defined, while $C_{i,d_1}=c^id^{d_2-d_1-i}(ad-bc)^{d_1}$ for $1\le i\le d_2-d_1$ and $D_{d_1}=d^{d_2-d_1}(ad-bc)^{d_1}.$
\end{rem}

In particular, it is easy to see that there is at most one generator in each $(q,t)$-bidegree, see also Proposition \ref{prop: bijection}. Also notice that we chose the generators in Theorem \ref{thm: generators} such that the monomial factor in $A_{i,j}$ does not contain $b$ or $d$, and the monomial factors in $B_{i,j},C_{i,j},D_j$ do not contain $bc$ (unless $j=d_1$).

Theorem \ref{thm: generators} follows from Proposition \ref{prop: basis}, which we prove in Section \ref{proof: generators}. 

\begin{prop}
\label{prop: basis}
The ideal $J(d_1, d_2)$ has the following basis (over $\mathbb{C}$):
$$
m(a,c)A_{i,j}, m(a,b,d)B_{i,j}, m(a,c,d)C_{i,j}, m(a,d)D_k\ (j\leq d_1-1)
$$
where $m$ are arbitrary monomials in the corresponding variables. For $j=d_1$ we have to add all polynomials of the form
$$
a^{\alpha}b^{\beta}c^{\gamma}d^{\delta}(ad-bc)^{d_1},\ \gamma+\delta\geq d_2-d_1.
$$
\end{prop}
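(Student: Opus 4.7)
My plan is to prove the proposition by induction on $d_1$. The base case $d_1=0$ is immediate: $J(0,d_2)=(c,d)^{d_2}$ has monomial $\C$-basis $\{a^{\alpha}b^{\beta}c^{\gamma}d^{\delta}:\gamma+\delta\ge d_2\}$, which matches the claimed $j=d_1=0$ additional family (all other families being vacuous).

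For the inductive step, the central structural input is the identity
\[
J(d_1,d_2)\cap (ad-bc)R=(ad-bc)\cdot J(d_1-1,d_2-1).
\]
The inclusion $\supseteq$ follows from $ad-bc\in (a,b)\cap (c,d)\cap (a-c,b-d)$, via the decompositions $ad-bc=a\cdot d-b\cdot c=b(a-c)-a(b-d)$. The reverse inclusion reduces to the three colon identities $(a,b)^{d_1}:(ad-bc)=(a,b)^{d_1-1}$ and analogues for $(c,d)^{d_2}$ and $(a-c,b-d)^{d_1}$; each follows from a direct monomial calculation in the appropriate quotient ring (for instance, in $R/(a,b)^{d_1}$ the product $(ad-bc)\cdot a^ib^jc^{\gamma}d^{\delta}=a^{i+1}b^jc^{\gamma}d^{\delta+1}-a^ib^{j+1}c^{\gamma+1}d^{\delta}$ vanishes iff $i+j\ge d_1-1$, after propagating through the orbit relation $(i,j,k,l)\mapsto (i+1,j-1,k-1,l+1)$ coming from multiplication by $ad/(bc)$). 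With the key identity in hand, the inductive hypothesis applied to $J(d_1-1,d_2-1)$ and multiplied through by $ad-bc$ shifts every basis element's $j$-index up by one, accounting for all proposed basis elements with $j\ge 1$ (in particular the $j=d_1-1$ additional family of $J(d_1-1,d_2-1)$ becomes the $j=d_1$ additional family of $J(d_1,d_2)$).

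It remains to realize the $j=0$ generators as a basis of the quotient $\bar J:=J(d_1,d_2)/(ad-bc)J(d_1-1,d_2-1)$, which embeds in $\bar R:=R/(ad-bc)$ through the Segre parametrization $\bar R\hookrightarrow \C[u,v,u',v']$ sending $a\mapsto uu'$, $b\mapsto uv'$, $c\mapsto u'v$, $d\mapsto vv'$. Since $a-c\mapsto u'(u-v)$ and $b-d\mapsto v'(u-v)$, every $j=0$ generator becomes $u^{d_1}v^{d_2}(u-v)^{d_1}$ times an explicit monomial in $u,v,u',v'$; setting up the bijection between the four families $A_{i,0},B_{i,0},C_{i,0},D_0$ (with their prescribed monomial multipliers in the variable sets $(a,c)$, $(a,b,d)$, $(a,c,d)$, $(a,d)$ respectively) and the monomials appearing in the image of $\bar J$ would yield both linear independence and spanning. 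The \emph{main obstacle} I anticipate is exactly this final matching, since the image of an intersection of ideals is not in general the intersection of images, so $\bar J$ cannot be read off directly from $\bar{(a,b)}^{d_1}\cap \bar{(c,d)}^{d_2}\cap \bar{(a-c,b-d)}^{d_1}$. My fallback would be a Hilbert series comparison: compute $\dim_{\C}\bar J_{(p,r)}$ bidegree by bidegree via short exact sequences relating $J(d_1,d_2)$ to its simpler primary ideals, and match with the bigraded count of the $j=0$ basis elements, deducing spanning from linear independence by a dimension argument.
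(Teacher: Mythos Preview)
Your inductive scaffolding via the identity $J(d_1,d_2)\cap(ad-bc)R=(ad-bc)\,J(d_1-1,d_2-1)$ is correct and is a genuinely different organization from the paper's. The colon computations are valid (each of $ad-bc$ has order exactly one in the relevant $(a,b)$-, $(c,d)$-, and $(a-c,b-d)$-adic filtrations, so the associated graded argument goes through), and multiplying the inductive basis of $J(d_1-1,d_2-1)$ by $ad-bc$ does produce exactly the proposed $j\ge 1$ families, including the $j=d_1$ additional family. So your reduction to the quotient $\bar J\subseteq\bar R=R/(ad-bc)$ is clean and correct.

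The $j=0$ step, however, is a genuine gap that you have not closed. You are right that the Segre image of $\bar J$ need not equal the intersection of the Segre images of the three primary components, so you cannot simply read off $\bar J$ from the easy description of each $\overline{I_k}$ inside $\C[u,v,u',v']$; at best the Segre picture gives linear independence of the $j=0$ elements (they map to $u^{d_1}v^{d_2}(u-v)^{d_1}$ times distinct monomials), not spanning. Your Hilbert-series fallback is not viable as stated: an inclusion--exclusion over the three primary ideals requires the Hilbert series of $R/\bigl((a,b)^{d_1}+(c,d)^{d_2}+(a-c,b-d)^{d_1}\bigr)$, which is not transparent since the three linear pairs do not live in disjoint variable sets, and the paper's own Hilbert-series computation runs \emph{through} Proposition~4.3, so it cannot be invoked here without circularity. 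What is missing is precisely a spanning argument for $\bar J$, and that is where the real work lies.

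For comparison, the paper does not induct on $d_1$. It writes $J=M\cap(a-c,b-d)^{d_1}$ with $M=(a,b)^{d_1}\cap(c,d)^{d_2}$ monomial and proves a normal-form lemma: whenever $\varphi(b-d)+\psi(ad-bc)\in M$, one may apply the syzygy $(\varphi,\psi)\sim(\varphi+\gamma(ad-bc),\psi-\gamma(b-d))$ to arrange $\varphi\in M$. This is established by a combinatorial ``cancellation-chain'' argument tracking how monomial pairs $m(b/d)$ and $m(ad/bc)$ interact. Iterating this inside $(a-c,b-d)^{d_1}$ expresses every element of $J$ as a sum of terms $\alpha_{i,j}(a-c)^i(b-d)^{d_1-i-j}(ad-bc)^j$ with each term individually in $M$, after which the monomial reduction $bc=ad-(ad-bc)$ yields the listed basis. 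Your inductive framework is conceptually elegant, but the spanning of $\bar J$ that it leaves open is of comparable difficulty to this key lemma, and you have not supplied a substitute for it.
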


\begin{example}
\label{ex: 1 1 1}
For $d_1=d_2=1$ we get the following 5 generators of $J(1,1)$:
$$
A_{1,0}=ac(a-c),\ B_{1,0}=bd(b-d),\ C_{1,0}=ac(b-d),\ D_{0}=ad(b-d),\ D_1=(ad-bc).
$$
We can change the variables back to see that the generators of $\cJ$ over $\C[t_1, t_2, t_3, x_1^{\pm}, x_2^{\pm}, x_3^{\pm}]$ are
$$A_{1,0}=(t_1-t_2)(t_3-t_2)(t_1-t_3), \ B_{1,0}=(x_1-x_2)(x_3-x_2)(x_1-x_3),$$
$$C_{1,0}=(t_1-t_2)(t_3-t_2)(x_1-x_3), \ D_{0} =(t_1-t_2)(x_3-x_2)(x_1-x_3),$$
and
$$
D_1=\det\left(
\begin{matrix}
    1 & 1 & 1\\
    x_1 & x_2 & x_3\\
    t_1 & t_2 & t_3\\
\end{matrix}
\right)
$$
\end{example}

\begin{cor}
\label{cor: J111}
We have that $$J(d_1,d_2)=J(1,1)^{d_1}\cdot J(0,1)^{d_2-d_1},$$ $$J'(d_1,d_2)=J'(1,1)^{d_1}\cdot J'(0,1)^{d_2-d_1}$$ and $$\mathcal{J}(d_1,d_2)=\mathcal{J}(1,1)^{d_1}\cdot \mathcal{J}(0,1)^{d_2-d_1}.$$
\end{cor}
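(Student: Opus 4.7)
The plan is to reduce everything to the identity $J(d_1,d_2)=J(1,1)^{d_1}\cdot J(0,1)^{d_2-d_1}$ in $R=\C[a,b,c,d]$. Once this is established, the $J'$ and $\cJ$ identities follow by flat base change: $J'(d_1,d_2)$ is obtained from $J(d_1,d_2)$ by tensoring with $\C[x_1+x_2+x_3,t_1+t_2+t_3]$, and $\cJ(d_1,d_2)$ is obtained from $J'(d_1,d_2)$ by localization at the $x_i$, both of which commute with forming products of ideals.

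One inclusion, $J(1,1)^{d_1}\cdot J(0,1)^{d_2-d_1}\subseteq J(d_1,d_2)$, is immediate from the subadditive multiplication property noted just before the corollary, since the exponent vectors satisfy $d_1\cdot(1,1)+(d_2-d_1)\cdot(0,1)=(d_1,d_2)$. The substance of the proof lies in the reverse inclusion.

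For the reverse inclusion, I would use Theorem \ref{thm: generators}: it suffices to exhibit, for each generator $G$ of $J(d_1,d_2)$ in the families $A_{i,j},B_{i,j},C_{i,j},D_j$, an explicit factorization $G=G_1\cdots G_{d_1}\cdot m$, where each $G_k$ is one of the five $J(1,1)$-generators listed in Example \ref{ex: 1 1 1} and $m$ is a monomial of total degree $d_2-d_1$ in $c$ and $d$. The factorization is essentially forced by the shape of $G$: the power of $(ad-bc)$ in $G$ dictates how many copies of $ad-bc$ to include, the power of $(a-c)$ dictates how many copies of $ac(a-c)$ to use, the power of $(b-d)$ is filled by appropriate combinations of $ac(b-d)$, $ad(b-d)$, and $bd(b-d)$, and what remains is a pure-monomial factor in $c,d$ of the required degree.

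The main obstacle is the bookkeeping for the $C_{i,j}$ family, where $i$ ranges up to $d_2-j$ rather than $d_1-j$, forcing a case split at $i=d_1-j$: when $i>d_1-j$, one exhausts all $d_1-j$ available copies of $ac(b-d)$ and absorbs the surplus $c^{i-(d_1-j)}$ into the monomial factor $m$. In every case one must verify that the total number of $J(1,1)$-factors is exactly $d_1$ and that $\deg m=d_2-d_1$, but these are routine exponent matches. The $j=d_1$ boundary generators from Remark \ref{rem: j=d1} fit the same scheme, with all $d_1$ factors being $ad-bc$ and $m$ read directly off the given generator.
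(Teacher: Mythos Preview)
Your proposal is correct and follows essentially the same route as the paper: reduce to the identity for $J$ (the paper dispenses with the $J'$ and $\cJ$ cases by a terse ``the other two equations follow immediately,'' which is your flat base-change remark), note the easy containment, and for the reverse inclusion explicitly factor each generator from Theorem~\ref{thm: generators} as a product of $d_1$ of the five $J(1,1)$-generators from Example~\ref{ex: 1 1 1} times a monomial in $c,d$ of degree $d_2-d_1$. The only cosmetic difference is in the $C_{i,j}$ family: you split at $i=d_1-j$ (taking $\min(i,d_1-j)$ copies of $ac(b-d)$), while the paper splits at $i=d_2-d_1$ (taking $x=\max(0,i+d_1-d_2)$ copies); both choices give valid factorizations, as any $x$ with $\max(0,i+d_1-d_2)\le x\le \min(i,d_1-j)$ will do.
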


\begin{proof}
We prove the first equation, and the other two equations follow immediately.

The containment $J(1,1)^{d_1}\cdot J(0,1)^{d_2-d_1}\subseteq J(d_1,d_2)$ is clear, so it is sufficient to show that any generator of $J(d_1,d_2)$ can be written as a product of $d_1$ generators of $J(1,1)$ (listed in Example \ref{ex: 1 1 1}) and $d_2-d_1$ generators of $J(0,1)=(c,d)$. Indeed:
$$
A_{i,j}=a^{d_1-j}c^{d_2-j}(a-c)^{i}(b-d)^{d_1-j-i}(ad-bc)^{j}=(ac(a-c))^{i} \cdot (ac(b-d))^{d_1-j-i}\cdot (ad-bc)^{j}\cdot c^{d_2-d_1},
$$
$$
B_{i,j}=a^{d_1-j-i}b^{i}d^{d_2-j}(b-d)^{d_1-j}(ad-bc)^{j}=(ad(b-d))^{d_1-j-i}(bd(b-d))^{i}(ad-bc)^{j}d^{d_2-d_1},
$$
$$
C_{i,j}=a^{d_1-j}c^{i}d^{d_2-j-i}(b-d)^{d_1-j}(ad-bc)^{j}=
$$
$$(ac(b-d))^{x}(ad(b-d))^{d_1-j-x}(ad-bc)^{j}c^{i-x}d^{d_2-d_1-i+x},
$$
where $x=\max(0,i+d_1-d_2)$. Note that $i\le d_2-j$, so
$i+d_1-d_2\le d_1-j$ and hence $x\le d_1-j$. Also, $d_1\le d_2$, so $i+d_1-d_2\le i$ and $x\le i$. Therefore all exponents are indeed nonnegative. 
Finally,
$$
D_{j}=a^{d_1-j}d^{d_2-j}(b-d)^{d_1-j}(ad-bc)^{j}=(ad(b-d))^{d_1-j}(ad-bc)^{j}d^{d_2-d_1}.
$$
\end{proof}

\begin{rem}
Note that by Remark \ref{rem: j=d1} the polynomials
$a^{\alpha}b^{\beta}c^{\gamma}d^{\delta}(ad-bc)^{d_1},\ \gamma+\delta\geq d_2-d_1$ are either multiples of $C_{i,d_1}$ or of $D_{d_1}$.
\end{rem}

In \cite{sheaf}, Gorsky, Kivinen, and Oblomkov define a graded algebra $\mathcal{A}_G = \bigoplus_{d=0}^\infty  \mathcal{A}_d$, depending only on a reductive group $G$, with some specific properties. One of the key properties is that for any $\gamma \in \mathfrak{g},$ the direct sum of homologies of affine Springer fibers

$$F_{\gamma} = \bigoplus_{k=0}^{\infty} H_*(\Sp_{t^k \gamma})$$
is a graded module over $\mathcal{A}_G$, or equivalently, that there is a corresponding quasi-coherent sheaf $\mathcal{F}_{\gamma}$ on $\Proj \bigoplus_{d=0}^\infty  \mathcal{A}_d$. They conjecture that $F_{\gamma}$ is finitely generated and that this sheaf is coherent \cite[Conjecture 8.1]{sheaf}. In the case where $G= \Gln$, they show that this graded algebra is generated in degrees 0 and 1, and that $\Proj \bigoplus_{d=0}^\infty \mathcal{A}_d = \text{Hilb}^n(\C^* \times \C)$. A special case of this conjecture follows from Theorem \ref{thm: generators} and its corollaries.

 Indeed, it is proved in \cite{sheaf} that $\mathcal{A}_0$ is the space of symmetric polynomials in $\C[t_1,\ldots,t_n,x_1^{\pm},\ldots,x_n^{\pm}]$, and $\mathcal{A}_1$ is the space of antisymmetric polynomials. 
 

\begin{thm}
    In the case of $G = \mathrm{GL}_3$ and $\gamma$ as in (\ref{gamma def}), the graded module $F_{\gamma}$ is finitely generated over $\mathcal{A}_G$, and defines a coherent sheaf on $\mathrm{Hilb}^3(\C^* \times \C)$, i.e. Conjecture 8.1 holds in this case.
\end{thm}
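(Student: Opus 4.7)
The plan is to combine Theorem~\ref{thm: J} with the multiplicative factorization of Corollary~\ref{cor: J111} to exhibit a finite generating set for $F_\gamma$ as a graded $\mathcal{A}_G$-module. Applying Theorem~\ref{thm: J} to the matrix $t^k\gamma$ (whose diagonal entries are $z_i t^{d_i+k}$, so that each $d_{ij}$ shifts to $d_{ij}+k$) gives
$$\Delta^{(k)} H_*^T(\Sp_{t^k\gamma}) \cong \mathcal{J}(d_1+k,\, d_2+k), \qquad \Delta^{(k)} = \prod_{i \neq j}(t_i-t_j)^{d_{ij}+k}.$$
Iterating Corollary~\ref{cor: J111} then yields $\mathcal{J}(d_1+k, d_2+k) = \mathcal{J}(1,1)^k \cdot \mathcal{J}(d_1, d_2)$, so that, after accounting for the $\Delta^{(k)}$ denominators, the module $F_\gamma$ is identified with $\bigoplus_{k \ge 0} \mathcal{J}(1,1)^k \cdot \mathcal{J}(d_1, d_2)$. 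Theorem~\ref{thm: generators} already provides an explicit finite list of generators for the $k=0$ piece $\mathcal{J}(d_1, d_2)$.

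Next I would match the geometric action of $\mathcal{A}_G$ on $F_\gamma$ with this algebraic description. Since \cite{sheaf} identifies $\mathcal{A}_0$ with symmetric polynomials and $\mathcal{A}_1$ with antisymmetric polynomials in $\C[\mathbf{t},\mathbf{x}^\pm]$, and since $\mathcal{A}_G$ is generated in degrees $0$ and $1$, it is enough to verify that multiplication by enough elements of $\mathcal{A}_1$ realizes the passage from the $k$-th to the $(k+1)$-th graded piece. The five generators of $\mathcal{J}(1,1)$ computed in Example~\ref{ex: 1 1 1} are Vandermonde-type alternants together with the small determinant $D_1$, and these are precisely the elements generating the module of $S_3$-alternants over the ring of symmetric polynomials in $t_i, x_i$. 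The ratio $\Delta^{(k+1)}/\Delta^{(k)}$ is the full Vandermonde $\prod_{i<j}(t_i-t_j)$, which absorbs the antisymmetrization needed so that multiplication by $\mathcal{A}_1$ sends $\Delta^{(k)\,-1}\mathcal{J}(d_1+k,d_2+k)$ into $\Delta^{(k+1)\,-1}\mathcal{J}(d_1+k+1,d_2+k+1)$. Consequently $F_\gamma$ is generated over $\mathcal{A}_G$ by the finite set from Theorem~\ref{thm: generators}, proving finite generation; coherence of $\mathcal{F}_\gamma$ on $\mathrm{Proj}\,\mathcal{A}_G = \mathrm{Hilb}^3(\C^* \times \C)$ then follows immediately.

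The main obstacle is the compatibility check in the second step: rigorously matching the geometric $\mathcal{A}_G$-action on $F_\gamma$ (constructed in \cite{sheaf} via equivariant cohomology of the Hilbert scheme) with the algebraic multiplication of ideals supplied by Corollary~\ref{cor: J111}. Once this compatibility is established, the remainder is essentially bookkeeping: Theorem~\ref{thm: generators} supplies finitely many degree-zero generators, and Corollary~\ref{cor: J111} ensures that all higher graded pieces are obtained from these by multiplication with $\mathcal{J}(1,1)^k \subseteq \mathcal{A}_k$.
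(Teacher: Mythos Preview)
Your approach is essentially the same as the paper's: both hinge on Corollary~\ref{cor: J111} to show that $\mathcal{J}(1,1)\cdot\mathcal{J}(d_1+k,d_2+k)=\mathcal{J}(d_1+k+1,d_2+k+1)$, whence $F_\gamma$ is generated over $\mathcal{A}_G$ in degree zero by the finitely many generators of $\mathcal{J}(d_1,d_2)$.

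There is, however, a concrete error in your second step. You assert that the five generators of $\mathcal{J}(1,1)$ from Example~\ref{ex: 1 1 1} ``are precisely the elements generating the module of $S_3$-alternants.'' This is false: $C_{1,0}=(t_1-t_2)(t_3-t_2)(x_1-x_3)$ and $D_0=(t_1-t_2)(x_3-x_2)(x_1-x_3)$ are not $S_3$-antisymmetric (apply the transposition $(1\ 2)$). What is actually needed is that $\mathcal{A}_1$ generates $\mathcal{J}(1,1)$ \emph{as an ideal} over the full ring $\C[\mathbf{t},\mathbf{x}^{\pm}]$; the paper obtains this by a direct citation of Haiman's Corollary~3.8.3. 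Once that is in hand, $\mathcal{A}_1\cdot\mathcal{J}(d_1,d_2)=\mathcal{J}(1,1)\cdot\mathcal{J}(d_1,d_2)$ automatically (since $\mathcal{J}(d_1,d_2)$ is already a module over the whole ring), and Corollary~\ref{cor: J111} finishes the argument.

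Your discussion of the ratios $\Delta^{(k+1)}/\Delta^{(k)}$ and ``absorbing the antisymmetrization'' is not needed and somewhat muddled: the paper works entirely at the level of the ideals $\mathcal{J}$, where the $\mathcal{A}_G$-action is plain multiplication, and no $\Delta$ bookkeeping enters. The compatibility concern you flag (matching the geometric $\mathcal{A}_G$-action from \cite{sheaf} with ideal multiplication) is real, but the paper takes it as part of the setup from \cite{sheaf} rather than something to be reproved.
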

\begin{proof}
    By Corollary 3.8.3 in \cite{Haiman}, the ideal generated by $\mathcal{A}_1$ is exactly $\cJ(1,1)$. There is a natural inclusion of ideals
    $$\mathcal{J}(d_1,  d_2)\cdot  \mathcal{J}(1,1) \rightarrow \mathcal{J}(d_1+1, d_2+1).$$ It follows from Corollary \ref{cor: J111} that this map is actually surjective as well. Since $H_*(\Sp_{t^k\gamma})$ corresponds to the ideal $\mathcal{J}(d_1+k,d_2+k)$, this shows that the module $F_{\gamma}$ is generated in degree 0, and therefore finitely generated by the generators of $\mathcal{J}(d_1,d_2)$. 
\end{proof}

\subsection{Hilbert Series}
\label{sec: Hilbert}

Let us introduce two rational functions
$$
H(d_1,\ldots,d_n)=\sum_{T}\frac{z_1^{d_n}\cdots z_n^{d_1}}{(1-q)^n(1-t)^n}\prod_{i=2}^{n}\frac{1}{(1-z_i^{-1})}\prod_{i<j}\omega(z_i/z_j)
$$
and
$$
F(d_1,\ldots,d_n)=\sum_{T}z_1^{d_n}\cdots z_n^{d_1}\prod_{i=2}^{n}\frac{1}{(1-z_i^{-1})(1-qtz_{i-1}/z_i)}\prod_{i<j}\omega(z_i/z_j).
$$
Here the sums are over standard tableaux $T$ with $n$ boxes, $z_i$ is the $(q,t)$-content $q^{c-1}t^{r-1}$ of  the box labeled by $i$ in row $r$ and column $c$ in $T$, and $\omega(x) = \frac{(1-x)(1-qtx)}{(1-qx)(1-tx)}$. By convention, all the factors of the form $(1-1)$ in the above products (either in the numerator or in denominator) should be ignored. 

The function $F(d_1,\ldots,d_n)$ is also known as generalized $(q,t)$-Catalan number, see \cite{qt} for more details and context. Note that the order of the $d_i$ is reversed here compared to \cite{qt}.

\begin{conj}
\label{conj: Hilbert}
We have that:

a) The Hilbert series of the ideal $J'(d_1,\ldots,d_n)$ equals $H(d_1,\ldots,d_n)$.

b) The Hilbert series of the generating set $J'(d_1,\ldots,d_n)/\mathfrak{m}J'(d_1,\ldots,d_n)$ equals $F(d_1,\ldots,d_n)$. Here $\mathfrak{m}$ is the maximal ideal $\mathfrak{m}=(t_1,\ldots,t_n,x_1,\ldots,x_n)$.
\end{conj}

In particular, this conjecture implies  that $F(d_1,\ldots,d_n)$ is a polynomial in $q$ and $t$ with nonnegative coefficients (see \cite[Conjecture 1.3]{qt}) and provides an explicit algebraic interpretation of these coefficients. 
Similarly, the conjecture implies that $H(d_1,\ldots,d_n)$ is a power series in $q$ and $t$ with nonnnegative coefficients. 

\begin{example}
For $n=2$ we get $J(d_1,d_2)=(x_1-x_2,y_1-y_2)^{d_1}$. We change coordinates  to $x_1-x_2=a,y_1-y_2=b$ and $\overline{x}=x_1+x_2,\overline{y}=y_1+y_2$, then $J(d_1,d_2)$ has generating set $a^{d_1},a^{d_1-1}b,\ldots,b^{d_1}$, so the Hilbert series for the generating set equals
$$
q^{d_1}+q^{d_1-1}t+\ldots+t^{d_1}=\frac{q^{d_1}}{1-t/q}+\frac{t^{d_1}}{1-q/t}=F(d_1,d_2).
$$
Similarly, $J(d_1,d_2)$ is free over $\C[\overline{x},\overline{y}]$ with basis $a^{\alpha}b^{\beta},\alpha+\beta\ge d_1$, so by Lemma \ref{lem: s} below we get the Hilbert series
$$
\frac{1}{(1-q)(1-t)}\sum_{\alpha+\beta\ge d_1}q^{\alpha}t^{\beta}=\frac{q^{d_1}}{(1-q)^2(1-t)(1-t/q)}+\frac{t^{d_1}}{(1-q)(1-t)^2(1-q/t)}.
$$
\end{example}

\begin{thm}
\label{thm: Hilbert}
Conjectures \ref{conj: Hilbert}(a) and \ref{conj: Hilbert}(b)  hold for $n=3$ for all $d_1,d_2,d_3$.
\end{thm}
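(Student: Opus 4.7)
The plan is to compute both Hilbert series explicitly: for part (b) using the generating set of Theorem \ref{thm: generators}, and for part (a) using the $\C$-basis of Proposition \ref{prop: basis}. Throughout I use that $J'(d_1,d_2)=J(d_1,d_2)\otimes_{\C}\C[t_1+t_2+t_3,\,x_1+x_2+x_3]$, where the central variables $t_1+t_2+t_3$ and $x_1+x_2+x_3$ have bidegrees $q$ and $t$, so the Hilbert series of $J'(d_1,d_2)$ equals the Hilbert series of $J(d_1,d_2)$ times $\tfrac{1}{(1-q)(1-t)}$, and the minimal generating sets of $J$ and $J'$ coincide (the central variables kill out when passing to $J'/\mathfrak{m}J'$).

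For part (b), Theorem \ref{thm: generators} produces at most one minimal generator per $(q,t)$-bidegree, so the Hilbert series of $J(d_1,d_2)/\mathfrak{m}J(d_1,d_2)$ is just the sum of $q^{\deg_q X}t^{\deg_t X}$ over the listed generators $X\in\{A_{i,j},B_{i,j},C_{i,j},D_j\}$, with the $j=d_1$ stratum contributing only $C_{i,d_1}$ and $D_{d_1}$ by Remark \ref{rem: j=d1}. Each family at fixed $j$ is a finite geometric series in $q/t$ or $t/q$; summing these and then over $j=0,\ldots,d_1$ closes to an explicit rational function in $q,t$, which I will call $G_b$.

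For part (a), each basis family $m(\cdot)\cdot X$ in Proposition \ref{prop: basis} with $j<d_1$ contributes $q^{\deg_q X}t^{\deg_t X}\big/\bigl[(1-q)^{\alpha}(1-t)^{\beta}\bigr]$, where $(\alpha,\beta)$ records the numbers of $q$-weighted and $t$-weighted variables appearing in the attached monomial $m$. The $j=d_1$ stratum, consisting of all $a^{\alpha}b^{\beta}c^{\gamma}d^{\delta}(ad-bc)^{d_1}$ with $\gamma+\delta\ge d_2-d_1$, contributes $\frac{q^{d_1}t^{d_1}}{(1-q)(1-t)}\sum_{\gamma+\delta\ge d_2-d_1}q^\gamma t^\delta$, a standard rational expression. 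Summing all contributions and multiplying by $\tfrac{1}{(1-q)(1-t)}$ yields an explicit rational function $G_a$.

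On the combinatorial side, I expand $F(d_1,d_2,d_3)$ and $H(d_1,d_2,d_3)$ over the four standard tableaux of size three: shape $(3)$ with contents $(1,q,q^2)$, the two tableaux of shape $(2,1)$ with contents $(1,q,t)$ and $(1,t,q)$, and shape $(1,1,1)$ with contents $(1,t,t^2)$. Substituting and applying the convention that factors $(1-1)$ are ignored in both numerator and denominator cancels most of the singular terms and leaves four explicit rational functions whose sum is the desired right-hand side. The main obstacle is then the rational function identity matching $G_b$ and $G_a$ with these sums: after clearing a common denominator built from factors $(1-q^at^b)$ with small $a,b$, it reduces to a polynomial identity in $q,t$ of total degree linear in $d_1,d_2$, which I plan to verify directly. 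The already-established $n=2$ identity and the $q\leftrightarrow t$ symmetry (corresponding to transposing tableau shapes, which interchanges the two shape-$(2,1)$ summands and swaps the roles of shapes $(3)$ and $(1,1,1)$) provide useful structural cross-checks throughout.
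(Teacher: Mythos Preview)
Your plan for part (a) is essentially what the paper does: it computes the Hilbert series of $J(d_1,d_2)$ from the basis of Proposition \ref{prop: basis}, case by case over the families $m(a,c)A_{i,j}$, $m(a,b,d)B_{i,j}\cup m(a,d)D_j$, $m(a,c,d)C_{i,j}\cup m(a,d)D_j$, and the $j=d_1$ stratum, then collects the result into four terms carrying the monomials $q^{2d_1+d_2}$, $t^{2d_1+d_2}$, $q^{d_2}t^{d_1}$, $q^{d_1}t^{d_2}$, which are exactly the weights $z_2^{d_2}z_3^{d_1}$ for the four standard tableaux of size three.

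For part (b) the paper takes a shorter, more combinatorial route than yours. Instead of summing the geometric series in $q/t$ and matching against the tableau expansion of $F$, it invokes the closed form $F(d_1,d_2,d_3)=\sum_{j=0}^{d_1}(qt)^j[\,2d_1+d_2+1-3j\,]_{q,t}$ from \cite{qt} and uses Proposition \ref{prop: bijection}: the generators biject with lattice points in a trapezoid, and the $j$-th antidiagonal of that trapezoid contributes exactly $(qt)^j[\,2d_1+d_2+1-3j\,]_{q,t}$. Your direct approach also works, but the bijective argument avoids any rational-function manipulation.

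One point needs sharpening. Your phrase ``reduces to a polynomial identity in $q,t$ of total degree linear in $d_1,d_2$, which I plan to verify directly'' is ambiguous: since $d_1,d_2$ sit in the exponents, this is not a single polynomial identity but an infinite family. What actually makes the verification finite (and what the paper does in its ``collect coefficients'' step) is that both sides are $\C(q,t)$-linear combinations of finitely many monomial types $q^{\alpha}t^{\beta}$ with $(\alpha,\beta)$ an integer combination of $d_1,d_2$; you match the rational-function coefficient of each type separately. Several intermediate types such as $q^{d_1+d_2}t^{d_1}$ and $q^{d_1}t^{d_1+d_2}$ appear in the partial sums and must be shown to cancel, so organize the bookkeeping accordingly.
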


The statement of (a) follows from the Hilbert series calculation in Section \ref{sec: Hilb series}, and the proof of (b) will be in Section \ref{sec: Comb of J}.

\subsection{Combinatorics of \texorpdfstring{$J$}{J}}
\label{sec: Comb of J}

We've already seen in Example \ref{ex: 1 1 1} that $J(1,1)$ has 5 generators, and that in general the generators of $J$ each have a unique $(q,t)$-bidegree. We can plot the bidegrees of these generators as below.
\begin{example}
Here is an example where $d_1=d_2=3$:

\begin{center}
\begin{tikzpicture}
\draw  [dotted] (0,0)--(0,9);
\draw  [dotted] (1,0)--(1,9);
\draw  [dotted] (2,0)--(2,9);
\draw  [dotted] (3,0)--(3,9);
\draw  [dotted] (4,0)--(4,9);
\draw  [dotted] (5,0)--(5,9);
\draw  [dotted] (6,0)--(6,9);
\draw  [dotted] (7,0)--(7,9);
\draw  [dotted] (8,0)--(8,9);
\draw  [dotted] (9,0)--(9,9);
\draw  [dotted] (0,0)--(9,0);
\draw  [dotted] (0,1)--(9,1);
\draw  [dotted] (0,2)--(9,2);
\draw  [dotted] (0,3)--(9,3);
\draw  [dotted] (0,4)--(9,4);
\draw  [dotted] (0,5)--(9,5);
\draw  [dotted] (0,6)--(9,6);
\draw  [dotted] (0,7)--(9,7);
\draw  [dotted] (0,8)--(9,8);
\draw  [dotted] (0,9)--(9,9);

\draw (9,0) node {$\bullet$};
\draw (8,1) node {$\bullet$};
\draw (7,2) node {$\bullet$};
\draw (6,3) node {$\bullet$};
\draw (5,4) node {$\bullet$};
\draw (4,5) node {$\bullet$};
\draw (3,6) node {$\bullet$};
\draw (2,7) node {$\bullet$};
\draw (1,8) node {$\bullet$};
\draw (0,9) node {$\bullet$};

\draw (7,1) node {$\bullet$};
\draw (6,2) node {$\bullet$};
\draw (5,3) node {$\bullet$};
\draw (4,4) node {$\bullet$};
\draw (3,5) node {$\bullet$};
\draw (2,6) node {$\bullet$};
\draw (1,7) node {$\bullet$};

\draw (5,2) node {$\bullet$};
\draw (4,3) node {$\bullet$};
\draw (3,4) node {$\bullet$};
\draw (2,5) node {$\bullet$};

\draw (3,3) node {$\bullet$};

\draw (9,-0.5) node {$\deg_q$};
\draw (-0.5,9) node {$\deg_t$};

\draw (0,2.5)--(9,2.5);
\draw (2.5,0)--(2.5,9);
\draw (3.5,0)--(3.5,9);

\draw (5.5,0.5) node {$A_{i,k}$};
\draw (0.5,5.5) node {$B_{i,k}$};
\draw (5.5,5.5) node {$C_{i,k}$};
\draw (3,6.5) node {$D_k$};
\end{tikzpicture}
\end{center}
\end{example}

\begin{prop}
\label{prop: bijection}
The generators of $J$ are in bijection with the integer lattice points inside the trapezoid bounded by the following inequalities:
$$2d_1+d_2\geq x+y \geq d_1+d_2,\
x+2y \geq 2d_1+d_2,\
2x+y \geq 2d_1+d_2.$$

\end{prop}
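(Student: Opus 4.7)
The plan is to set up the bijection explicitly and check it in both directions. The key organizational observation is that every generator in Theorem \ref{thm: generators} with parameter $j$ has bidegree satisfying $\deg_q + \deg_t = 2d_1+d_2-j$, so $j$ is determined by the anti-diagonal on which the bidegree lies; moreover the four families $A_{i,j}$, $B_{i,j}$, $C_{i,j}$, and $D_j$ occupy disjoint regions in the $(\deg_q,\deg_t)$-plane distinguished by whether $\deg_q$ is less than, equal to, or greater than $d_1$, and in the last case whether $\deg_t<d_1$ or $\deg_t\ge d_1$.

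The forward containment --- every generator lands inside the trapezoid --- is a routine direct check. For instance, for $A_{i,j}$ the bidegree $(x,y)=(d_1+d_2-j+i,\, d_1-i)$ has $x+y = 2d_1+d_2-j$, and $0\le j\le d_1$ gives $d_1+d_2\le x+y\le 2d_1+d_2$. The inequality $x+2y\ge 2d_1+d_2$ reduces to $i+j\le d_1$, which is exactly the upper bound on $i$, and $2x+y\ge 2d_1+d_2$ reduces to $d_1+d_2+i\ge 2j$, which is immediate from $j\le d_1\le d_2$. The analogous checks for $B_{i,j}$, $C_{i,j}$, and $D_j$ are parallel, using $i\le d_2-j$ for the $C$ family.

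For the reverse direction, suppose $(x,y)$ is an integer point satisfying the four defining inequalities, and set $j:=2d_1+d_2-(x+y)\in\{0,1,\ldots,d_1\}$. I then produce the unique generator of bidegree $(x,y)$ by cases according to the sign of $x-d_1$. If $x<d_1$, the inequality $2x+y\ge 2d_1+d_2$ forces $y>d_2\ge d_1$, and $(x,y)$ is the bidegree of $B_{d_1-x,\,j}$; the same inequality translates into $d_1-x\ge j$, yielding $d_1-x\le d_1-j$ after comparison, i.e.\ the allowed range for the index. Symmetrically, if $x>d_1$ and $y<d_1$, the point is $A_{x-d_1-d_2+j,\,j}$, with the index bounds coming from $x+2y\ge 2d_1+d_2$ and the strict inequality $y<d_1$. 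If $x>d_1$ and $y\ge d_1$, the point is $C_{x-d_1,\,j}$, where $i\le d_2-j$ is equivalent to $x\le d_1+d_2-j$, i.e.\ to $y\ge d_1$. Finally, if $x=d_1$, the point is $D_j$.

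Combining surjectivity onto lattice points in the trapezoid with the disjointness of the four regions, the assignment is a bijection. The argument is essentially bookkeeping; the only minor care is needed at the boundary anti-diagonal $j=d_1$, where the families $A_{i,d_1}$ and $B_{i,d_1}$ are empty (cf.\ Remark \ref{rem: j=d1}), which is consistent with the fact that the short parallel side $x+y=d_1+d_2$ of the trapezoid consists entirely of points with $x,y\ge d_1$.
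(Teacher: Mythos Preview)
Your argument is correct and takes a genuinely different route from the paper's. The paper proves the bijection by counting: after checking that each generator lands in the trapezoid and noting that distinct generators have distinct bidegrees, it counts the lattice points diagonal by diagonal (each anti-diagonal $x+y=2d_1+d_2-j$ contributes $2d_1+d_2+1-3j$ points) and matches this against the count $\sum_j[(d_1-j)+(d_1-j)+(d_2-j)+1]$ coming from the four families. You instead construct the inverse map explicitly, recovering $j$ from the anti-diagonal and then the family and index from the trichotomy on $x-d_1$ (and, when $x>d_1$, on whether $y<d_1$). The paper's approach is shorter; yours is more informative, since it names the specific generator sitting at each lattice point, which is exactly what one wants when reading off the $(q,t)$-weights in the proof of Theorem~\ref{thm: Hilbert}(b) or tracing the bijections in Theorem~\ref{thm: fund domain to gens}.

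One small slip in exposition: in the $B$-case you write ``the same inequality translates into $d_1-x\ge j$,'' but $2x+y\ge 2d_1+d_2$ together with $x+y=2d_1+d_2-j$ gives $x\ge j$, hence $d_1-x\le d_1-j$; the conclusion you then state is the correct one.
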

\begin{proof}
    It is easy to check that all of the generators of $J$ satisfy these inequalities on their bidegrees $(x,y)=(\deg_q,\deg_t)$. Since the generators all have unique $(q,t)$-bidegree, it is sufficient to count the number of points in the integer lattice and check that it is the same as the number of generators.

    We will count the lattice points going by diagonals, starting with the top diagonal $x+y = 2d_1+d_2$. On this diagonal, $x+2y \geq 2d_1+d_2$ and $2x+y \geq 2d_1+d_2$ are trivially satisfied, since $x,y \geq 0$. So $x$ and $y$ can both range from $0$ to $2d_1+d_2$, and there are $2d_1+d_2+1$ points on this diagonal.

    On the next diagonal, $x+y = 2d_1+d_2-1$, we have that $x+2y = 2d_1+d_2-1+y$ and $2x+y = 2d_1+d_2-1+x$ are both at least $2d_1+d_2$. This implies that $x,y \geq 1$ so we have points
    $$(1,2d_1+d_2-2),\
    (2, 2d_1+d_2-1),\ \ldots
    (2d_1+d_2-2, 1),$$
    which amounts to 3 less points than the first diagonal. If we keep going, each diagonal will have 3 less points than the last, until the final diagonal $x+y = d_1+d_2$, which will have $d_2-d_1+1$ points. If we index the diagonal $x+y = 2d_1+d_2-j$ by $j,$ $j$ will range from $0$ to $d_1$. So in total the number of points in the lattice is

    $$\sum_{j=0}^{d_1} [2d_1 +d_2 + 1 - 3j].$$

    If we count the generators of $J$ as laid out in Theorem \ref{thm: generators}, we get

    $$A_{i,j}: \sum_{j=0}^{d_1} (d_1-j),\quad \, B_{i,j}: \sum_{j=0}^{d_1} (d_1-j),\quad 
    C_{i,j}: \sum_{j=0}^{d_1} (d_2-j), \quad D_j: \sum_{j=0}^{d_1} 1,$$
 and combining the sums, we get the same count. So we have shown the desired bijection.

\end{proof}

\begin{proof}[Proof of Theorem \ref{thm: Hilbert}(b)]
    In Example 1.2 in \cite{qt}, the authors show that 
    $$F(d_1,d_2,d_3) = [2d_1+d_2+1]_{q,t} + qt[2d_1+d_2-2] + \dots + q^{d_1}t^{d_1}[d_2-d_1+1]_{q,t},$$
    where $[n+1]_{q,t} := q^n + q^{n-1}t + \dots + qt^{n-1} + t^n$.

    We can see from the proof of Proposition \ref{prop: bijection} that this exactly matches up with the coordinates of the lattice points, grouped by diagonals, and therefore by Proposition \ref{prop: bijection}, this is in bijection with the generators of $J$ and their $(q,t)$ degrees.

    Any choice of basis of $J(d_1,\ldots,d_n)/\mathfrak{m}J(d_1,\ldots,d_n)$ lifts to a set of generators of $J$ by Nakayama's lemma. So the Hilbert series of $J(d_1,\ldots,d_n)/\mathfrak{m}J(d_1,\ldots,d_n)$ is precisely the degree count of the generators of $J$, so indeed it is $F(d_1,d_2,d_3)$.
\end{proof}

\section{Proofs}
\label{sec: proofs}
\subsection{Proof of Theorem \ref{thm: generators}}
\label{proof: generators}
After doing the change of variables to $\C[a,b,c,d]$, the upshot is that we've reduced the number of variables, and we can use the fact that $J = M \cap (a-c,b-d)^{d_1}$, where $M = (a,b)^{d_1} \cap (c,d)^{d_2}$ is a monomial ideal. In this section we will find a basis for $J$ over $\C$ by characterizing when elements of $(a-c,b-d)^{d_1}$ are in the monomial ideal $M$, proving Proposition \ref{prop: basis} and by extension Theorem \ref{thm: generators}. First, we will need a few key lemmas.

Since $M$ is a monomial ideal, a polynomial $f$ is in $M$ if and only if all monomials $m$ of $f$ that  have nonzero coefficients are in $M$. If $m$ is a monomial, then let $\deg_1(m)$ be the combined $(a,b)$ degree of $m$, i.e. the sum of its $a$ and $b$ degrees, and similarly let $\deg_2(m)$ be its combined $(c,d)$ degree. Then the monomial $m$ is in $M$ if and only $\deg_1(m) \geq d_1$, and $\deg_2(m) \geq d_2$.
Note that these degrees should not be confused with $\deg_q$ and $\deg_t$ defined above.

Consider some $f \in R$ of the form
$$f = \varphi (b-d) + \psi (ad-bc) \in M$$
for some $\varphi, \psi \in R.$
Notice that for any $\gamma \in R$, we can modify the coefficients $\varphi, \psi$ by simultaneously substituting:
\begin{equation}
\label{relation}
    \varphi \rightarrow \varphi + \gamma(ad-bc), \qquad \psi \rightarrow \psi - \gamma(b-d)
\end{equation}
without changing $f$.

\begin{lemma}
\label{lem: key lemma}
If
$$f = \varphi (b-d) + \psi (ad-bc) \in M$$
for some $\varphi, \psi \in R,$
then up to the relation \eqref{relation}, we can assume that $\varphi \in M$.
\end{lemma}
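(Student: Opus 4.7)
The strategy is to reformulate the lemma as a non-zero-divisor statement in commutative algebra. Since the relation \eqref{relation} changes $\varphi$ by an arbitrary $R$-multiple of $(ad-bc)$, the conclusion ``$\varphi\in M$ up to \eqref{relation}'' is equivalent to the assertion that $\varphi\in M+(ad-bc)R$. From $f\in M$ together with the identity $\varphi(b-d)=f-\psi(ad-bc)$, we immediately obtain $\varphi(b-d)\in M+(ad-bc)R$, so the lemma will follow once we prove that multiplication by $(b-d)$ is injective on the quotient ring $R/(M+(ad-bc)R)$.

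To establish this injectivity, I would analyze the associated primes of $R/(M+(ad-bc)R)$. The minimal primes are $(a,b)$ and $(c,d)$, inherited from the primary decomposition $M=(a,b)^{d_1}\cap(c,d)^{d_2}$, and both avoid $(b-d)$. The central point is to show that no embedded primes containing $(b-d)$ are introduced by the extra $(ad-bc)$.

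The key tool is the Segre-type parametrization: under the substitution $a=ux$, $b=uy$, $c=vx$, $d=vy$ one has $R/(ad-bc)\cong\C[ux,uy,vx,vy]$ (the bi-diagonal subring of $\C[u,v,x,y]$), the image of $M$ is the ideal generated by elements of the form $u^{d_1}v^{d_2}x^py^{d_1+d_2-p}$, and $b-d$ maps to $y(u-v)$. A direct bi-homogeneous-degree computation then shows that if an element $x$ of this quotient satisfies $ax,bx,cx,dx$ all in the image of $M$, then $x$ itself is in the image of $M$ — so the maximal ideal is not an embedded prime. A similar check dispatches any other candidate, leaving only the minimal primes $(a,b)$ and $(c,d)$; since $(b-d)$ lies in neither, the multiplication map is injective.

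With the non-zero-divisor property in hand we can write $\varphi=m+h(ad-bc)$ for some $m\in M$ and $h\in R$, and applying \eqref{relation} with $\gamma=-h$ replaces $(\varphi,\psi)$ by $(m,\psi+h(b-d))$, giving $\varphi=m\in M$ as required. The main obstacle I foresee is the associated-primes/socle analysis in the Segre coordinates; a more combinatorial alternative would exploit the observation that every monomial appearing in $\psi(ad-bc)$ is divisible by $ad$ or by $bc$, so monomials of $\varphi(b-d)$ whose supports are disjoint from both $\{a,d\}$ and $\{b,c\}$ cannot be canceled from the $\psi$-side and must lie in $M$ outright. This forces $\varphi^{ab}=0$, $\varphi^{cd}=0$, and $\varphi^{bd}\in M$, after which iterated applications of \eqref{relation} (trading $ad$-multiples for $bc$-multiples) would handle the remaining monomials.
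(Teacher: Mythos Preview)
Your reformulation is correct: the conclusion ``$\varphi\in M$ up to \eqref{relation}'' means exactly $\varphi\in M+(ad-bc)$, and since $\varphi(b-d)=f-\psi(ad-bc)\in M+(ad-bc)$, the lemma is equivalent to the assertion that $b-d$ is a non-zero-divisor on $R/(M+(ad-bc))$. This is a genuinely different route from the paper. There the argument is combinatorial: one follows ``cancellation chains'' of monomials in the expansion of $\varphi(b-d)+\psi(ad-bc)$, drawn as lattice paths with steps $b/d$ (coming from a factor $(b-d)$) and $ad/bc$ (coming from a factor $(ad-bc)$), and observes that the relation \eqref{relation} corresponds to adding or removing a unit square loop. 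Every chain can therefore be straightened to a vertical-then-horizontal path, and from the two endpoints of such a path one reads off directly that the corresponding piece of $\varphi$ lies in $M$. Your approach trades this explicit monomial manipulation for a clean structural statement, which is conceptually attractive and would transport better to related settings.

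However, there is a genuine gap at the decisive step. You assert that a ``direct bi-homogeneous-degree computation'' in the Segre model shows the maximal ideal is not embedded and that ``a similar check dispatches any other candidate,'' but neither computation is carried out --- and this is exactly where the content of the lemma lives. Moreover, ruling out $\mathfrak m=(a,b,c,d)$ alone is not enough: an associated prime containing $b-d$ must contain $(a,b,d)$ or $(b,c,d)$, and these height-three primes have to be excluded in their own right (for instance, localizing at $(a,b,d)$ collapses the problem to $\C[a,c,d]_{(a,d)}/(a^{d_1})$, whose only associated prime is $(a)$, so that case is fine --- but it must be argued, and the maximal ideal still needs a separate treatment). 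The image of $M$ in $\C[ux,uy,vx,vy]$ is not a monomial ideal of that subring, so the promised Segre computation is less routine than the word ``direct'' suggests. Your closing combinatorial alternative gestures toward the paper's method but is too terse to stand as a proof.
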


\begin{proof}
Since $M$ is a monomial ideal, $\varphi (b-d) + \psi (ad-bc) \in M$ if and only if every monomial term of $\varphi (b-d) + \psi (ad-bc)$ is in $M$. If $m$ is a monomial in the expansion of this expression, either $m$ is in $M$, or $m$ cancels with some other monomial. 

When we expand, all monomials come in pairs from distributing $b-d$ or $ad-bc$. These pairs look like $m \left(\frac{b}{d}\right) - m$ or $m \left(\frac{ad}{bc}\right) - m$, where each term is appropriately divisible so that there are no denominators. If one of these monomials cancels with another monomial, that other monomial also must be part of a pair like the above. For example,

$$
\frac{m}{cd}(ad-bc) + \frac{m}{d}(b-d)
= m \left(\frac{b}{d}\right) \left(\frac{ad}{bc}\right) - m \left(\frac{b}{d}\right) + m \left(\frac{b}{d}\right) - m
= m \left(\frac{b}{d}\right) \left(\frac{ad}{bc}\right) - m.
$$

We can continue to follow a chain of cancellation until either we get two terminal monomials that do not cancel with anything, or we eventually reach a monomial that cancels with the starting monomial $m$, creating a cycle. We can visualize these chains of cancellations by oriented paths in a 2 dimensional lattice. Vertices represent monomials, vertical edges represent a difference of two monomials of the form $m \left(\frac{b}{d}\right) - m$, and horizontal edges represent a difference of the form $m \left(\frac{ad}{bc}\right) - m$. The full path represents the sum of all the pairs of monomials represented by each edge, and the end vertices of the path are the terminal monomials. For example, the above cancellation can be represented by the path:

\tikzset{every picture/.style={line width=0.75pt}} 
\begin{center}
\begin{tikzpicture}
\draw (0,0) node {$m$};
\draw (0,2) node {$m\left(\frac{b}{d}\right)$};
\draw (2.5,2) node {$m\left(\frac{ad}{bc}\right)\left(\frac{b}{d}\right)$};
\draw [->] (0,0.3)--(0,1.7);
\draw [->] (0.5,2)--(1.5,2);
\end{tikzpicture}
\end{center}

Cycles in these paths correspond exactly to the equivalence 
$$
\varphi(b-d) + \psi(ad-bc)  = (\varphi + \gamma(ad-bc))(b-d) + (\psi - \gamma(b-d))(ad-bc).
$$

To see this, let $m$ be a monomial. A cycle looks like this:

\tikzset{every picture/.style={line width=0.75pt}} 
\begin{center}
\begin{tikzpicture}
\draw (0,0) node {$m$};
\draw (0,2) node {$m\left(\frac{b}{d}\right)$};
\draw (2.5,2) node {$m\left(\frac{ad}{bc}\right)\left(\frac{b}{d}\right)$};
\draw (2.4,0) node {$m\left(\frac{ad}{bc}\right)$};
\draw [<-] (0,1.7)--(0,0.3);
\draw [<-] (1.5,2)--(0.5,2);
\draw [<-] (0.3,0)--(1.7,0);
\draw [<-] (2.2,0.3)--(2.2,1.7);
\end{tikzpicture}
\end{center}

This corresponds to the identity
$$ 
\frac{m}{d}(b-d) + \frac{m}{cd}(ad-bc) - \frac{ma}{bc}(b-d) - \frac{m}{bc}(ad-bc) = 0.
$$

For all of these terms to be monomials, $b,c,d$ must all divide $m$. Multiplying through by $bcd$ and grouping, we get

$$-m(ad-bc)(b-d) + m(b-d)(ad-bc) = 0.$$

Adding this cycle corresponds to using the above equivalence with $\gamma = m$. Adding any number of these cycles along our path corresponds to modifying the coefficients $\varphi$ and $\psi$ with relation \eqref{relation} without changing the overall sum $f$.

We can add and subtract this square loop to any path in order to both eliminate any loops, and to reorder cancellation. For example:

\tikzset{every picture/.style={line width=0.75pt}} 
\begin{center}
\begin{tikzpicture}
\draw (0,0) node {$m$};
\draw (0,2) node {$m\left(\frac{b}{d}\right)$};
\draw (2.5,2) node {$m\left(\frac{ad}{bc}\right)\left(\frac{b}{d}\right)$};
\draw [->] (0,0.3)--(0,1.7);
\draw [->] (0.5,2)--(1.5,2);

\draw (3.8,1) node {\Huge =};

\draw (5,0) node {$m$};
\draw (5,2) node {$m\left(\frac{b}{d}\right)$};
\draw (7.5,2) node {$m\left(\frac{ad}{bc}\right)\left(\frac{b}{d}\right)$};
\draw (7.4,0) node {$m\left(\frac{ad}{bc}\right)$};
\draw [<-] (5,1.7)--(5,0.3);
\draw [<-] (6.5,2)--(5.5,2);
\draw [<-] (5.3,0)--(6.7,0);
\draw [<-] (7.2,0.3)--(7.2,1.7);

\draw (8.8,1) node {\Huge +};

\draw (10,0) node {$m$};
\draw (12.4,0) node {$m\left(\frac{ad}{bc}\right)$};
\draw (12.5,2) node {$m\left(\frac{ad}{bc}\right)\left(\frac{b}{d}\right)$};
\draw [->] (10.3,0)--(11.7,0);
\draw [->] (12.2,0.3)--(12.2,1.7);

\end{tikzpicture}
\end{center}

So up to relation \eqref{relation}, we can make any path into one with vertical edges first and horizontal edges after, going from $m$ to $m \left(\frac{b}{d}\right)^k$ to $m \left(\frac{b}{d}\right)^k \left(\frac{ad}{bc}\right)^j$, with $k$ and $j$ possibly negative or 0.

\tikzset{every picture/.style={line width=0.75pt}} 
\begin{center}
\begin{tikzpicture}
\draw (0,0) node {$m$};
\draw (0,2) node {$m\left(\frac{b}{d}\right)^k$};
\draw (2.5,2) node {$m\left(\frac{ad}{bc}\right)^j\left(\frac{b}{d}\right)^k$};
\draw [dotted,->] (0,0.3)--(0,1.7); 
\draw [dotted,->] (0.5,2)--(1.5,2);
\end{tikzpicture}
\end{center}

This reduced path corresponds to $\varphi(b-d) + \psi(ad-bc)$, where 
$$\varphi = \frac{m \left(\frac{b}{d}\right)^k - m}{b-d}\ \mathrm{and}\ 
\psi = \frac{m\left(\frac{b}{d}\right)^{k}\left(\frac{ad}{bc}\right)^{j} - m\left(\frac{b}{d}\right)^{k}}{ad-bc}.$$
Notice for any monomial $m$, $m \left(\frac{ad}{bc}\right)^j \in M$ if and only if $m \in M$, as multiplying by $\frac{a}{b}$ does not change the combined $(a,b)$ degree of a monomial, and the same for $\frac{c}{d}$. 

Given a reduced path as above, we know that the terminal monomials $m$ and $m \left(\frac{b}{d}\right)^k \left(\frac{ad}{bc}\right)^j$ are in $M$. But since $m \left(\frac{b}{d}\right)^k \left(\frac{ad}{bc}\right)^j \in M$, by the above, $m \left(\frac{b}{d}\right)^k$ is in $M$. Now we want to show that $\varphi \in M$.

Since $m \left(\frac{b}{d}\right)^k \in M$, it follows that $\frac{m}{d^k} \in (c,d)^{d_2}$, since multiplying by $b^k$ does not change the $(c,d)$ degree. Since $m$ is in $(a,b)^{d_1},$ it follows that $\frac{m}{d^k} \in (a,b)^{d_1}$, since it has the same $(a,b)$ degree as $m$. So $\frac{m}{d^k} \in M$, and therefore indeed
$$\varphi = \frac{m}{d^k} \frac{b^k-d^k}{b-d} \in M.$$
For a general $\varphi(b-d) + \psi(ad-bc)$, we can break the terms into a sum of discrete cancellation chains,
$$\sum_i \left[\varphi_{1,i}(b-d) + \varphi_{2,i}(ad-bc)\right].$$ For each cancellation chain, we have shown that up to equivalence, $\varphi_{1,i} \in M$, and therefore 
$$\varphi = \sum_i \varphi_{1,i} \in M.$$

\end{proof}

\begin{lemma}
\label{b-d}
    We have that $\varphi(b-d) \in M$ if and only if $\varphi \in M$, and
    $\varphi(a-c) \in M$ if and only if $\varphi \in M$.
\end{lemma}
\begin{proof}
    This is essentially what the final argument of the above proof shows. If we expand the expression $\varphi(b-d)$ into monomials, then as in the proof of Lemma \ref{lem: key lemma}, we will get chains of cancellations with two terminal monomials that do not cancel, which looks like the path:

    \tikzset{every picture/.style={line width=0.75pt}} 
\begin{center}
\begin{tikzpicture}
\draw (0,0) node {$m$};
\draw (0,2) node {$m\left(\frac{b}{d}\right)^k$};
\draw [dotted,->] (0,0.3)--(0,1.7); 
\end{tikzpicture}
\end{center}

This chain corresponds to
$$\varphi = \frac{m \left(\frac{b}{d}\right)^k - m}{b-d},$$
and as before, $m, m \left(\frac{b}{d}\right)^k \in M$ together imply that $\varphi \in M$. Any general $\varphi(b-d)$ can be split into the sum of distinct cancellation chains, and thus $\varphi \in M$.
    
The same argument applies to $\varphi(a-c)$ since $M$ is symmetric in $(b,d), (a,c)$.
\end{proof}

Next, we characterize how we can best express $f \in (a-c,b-d)^{d_1}$ in order to see when $f \in M$.

\begin{lemma}
\label{alphaac}
    Any $f \in (a-c, b-d)$ can be written as 
    $$f = \alpha_1(a-c) + \alpha_2(b-d) + \alpha_3(ad-bc),$$
    where $\alpha_i \in R$ and $\alpha_1$ is a polynomial in $a$ and $c$ only.
\end{lemma}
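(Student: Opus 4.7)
The plan is to reduce to two elementary identities in $R=\C[a,b,c,d]$, namely
$$b(a-c) = a(b-d) + (ad-bc), \qquad d(a-c) = c(b-d) + (ad-bc),$$
both of which are immediate verifications. Together, these say that whenever a factor of $b$ or $d$ multiplies $(a-c)$, it can be traded for something times $(b-d)$ plus something times $(ad-bc)$. Consequently, every occurrence of $b$ or $d$ in the coefficient of $(a-c)$ can be removed at the cost of adjusting the coefficients of $(b-d)$ and $(ad-bc)$.

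To carry this out, I would start from any presentation $f = \beta_1(a-c) + \beta_2(b-d)$ with $\beta_1,\beta_2\in R$, which exists by hypothesis. I would then decompose $\beta_1$ by splitting off the part with no $b$ and no $d$: any monomial $a^i b^j c^k d^\ell$ is either of the form $a^ic^k$ (contributing to a polynomial $q_0(a,c)\in\C[a,c]$), or has $j\ge 1$ (in which case it can be written as $b$ times a monomial in $R$), or has $j=0,\ell\ge 1$ (in which case it can be written as $d$ times a monomial in $\C[a,c,d]$). Collecting terms gives a (nonunique) decomposition
$$\beta_1 \;=\; q_0(a,c) + bP + dQ, \qquad P,Q\in R.$$

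Substituting this into $f$ and applying the two identities above yields
$$\beta_1(a-c) \;=\; q_0(a,c)(a-c) + (aP+cQ)(b-d) + (P+Q)(ad-bc),$$
hence
$$f \;=\; q_0(a,c)(a-c) + \bigl(\beta_2 + aP + cQ\bigr)(b-d) + (P+Q)(ad-bc).$$
Setting $\alpha_1 = q_0(a,c)$, $\alpha_2 = \beta_2 + aP + cQ$, and $\alpha_3 = P+Q$ proves the lemma.

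There is no real obstacle here, the only nontrivial step being to spot the two identities; once they are in hand, the argument is a single substitution and no induction is required. The same strategy would also yield a symmetric statement with $\alpha_2\in\C[b,d]$ if desired, by starting with $\beta_2$ instead of $\beta_1$.
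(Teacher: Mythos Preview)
Your proof is correct and follows essentially the same approach as the paper's: both start from a presentation $f=\gamma_1(a-c)+\gamma_2(b-d)$ and use the two identities $b(a-c)=a(b-d)+(ad-bc)$ and $d(a-c)=c(b-d)+(ad-bc)$ to strip all $b$'s and $d$'s from the coefficient of $(a-c)$. Your version is slightly more explicit in packaging the reduction as a single substitution via the decomposition $\beta_1=q_0(a,c)+bP+dQ$, whereas the paper phrases it as applying the identities term by term, but the content is identical.
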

\begin{proof}
    If $f \in (a-c,b-d)$, then
    $$f = \gamma_1(a-c) + \gamma_2(b-d)$$
    for some $\gamma_1,\gamma_2 \in R$. Observe that 
    $$b(a-c) = a(b-d) + (ad-bc),$$
    and
    $$d(a-c) = c(b-d) + (ad-bc).$$

    So by applying this to any term in $\gamma_1$ with a factor of $b$ or $d$, we can ensure that $\gamma_1$ only depends on $a$ and $c$.
\end{proof}

\begin{lemma}
    If $f \in (a-c,b-d)^{d_1} \cap M$, we can write $f = \sum_i f_i$, where
    $$f_i = \sum_{j=0}^{d_1-i} \alpha_{i,j} (a-c)^i(b-d)^{d_1-i-j}(ad-bc)^j$$
    and each $f_i \in M$.
\end{lemma}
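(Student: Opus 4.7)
The plan is to proceed by induction on $d_1$. The base case $d_1=0$ is immediate, since $(a-c,b-d)^0 = R$ and we simply take $f_0 = f$. For the inductive step, the first task is to exhibit some expansion of $f$ in the form
$$
f = \sum_{i=0}^{d_1}\sum_{j=0}^{d_1-i} \alpha_{i,j}(a-c)^i(b-d)^{d_1-i-j}(ad-bc)^j.
$$
This is possible because the monomials $(a-c)^i(b-d)^{d_1-i-j}(ad-bc)^j$ all lie in $(a-c,b-d)^{d_1}$ (using $ad-bc = d(a-c)-c(b-d)$) and contain the standard generators $(a-c)^p(b-d)^{d_1-p}$ as the $j=0$ terms. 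One reaches such an expression by iterating the normalization in the proof of Lemma \ref{alphaac}, namely by using $b(a-c) = a(b-d) + (ad-bc)$ and $d(a-c) = c(b-d) + (ad-bc)$ to eliminate factors of $b$ and $d$ from the coefficients of $(a-c)^i$ for $i \ge 1$.

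Next I would isolate the $(a-c)$-free piece $f_0 := \sum_j \alpha_{0,j}(b-d)^{d_1-j}(ad-bc)^j \in (b-d,ad-bc)^{d_1}$ and write $f = f_0 + (a-c)g$. Since $a-c,b-d$ is a regular sequence, $(a-c,b-d)^{d_1}:(a-c) = (a-c,b-d)^{d_1-1}$, so $g \in (a-c,b-d)^{d_1-1}$. The technical heart of the argument is to arrange, via the gauge freedom in the expansion, that $f_0 \in M$. The relevant gauge comes from the syzygy $d(a-c)-c(b-d) = (ad-bc)$, which lets us trade $\alpha_{0,j+1}$ against $\alpha_{0,j}$ and $\alpha_{1,j}$ and thus shuffle material between $f_0$ and $(a-c)g$. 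I would combine this with an iterated application of Lemma \ref{lem: key lemma}: at each level peel off one factor of $(b-d)$ or $(ad-bc)$ from the current expression, use the lemma to force the $(b-d)$-coefficient into $M$ (so that both the $(b-d)$-piece and the $(ad-bc)$-piece lie in $M$), and recurse into each remaining coefficient.

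Once we know $f_0 \in M$, we have $(a-c)g = f - f_0 \in M$; Corollary \ref{b-d} then gives $g \in M$. Combined with $g \in (a-c,b-d)^{d_1-1}$, the inductive hypothesis supplies a decomposition $g = \sum_{i'=0}^{d_1-1} g_{i'}$ with each $g_{i'}\in M$ in the required shape. Setting $f_i := (a-c) g_{i-1}$ for $1 \le i \le d_1$ — each automatically in $M$ since $g_{i-1} \in M$ — assembles the desired decomposition $f = \sum_{i=0}^{d_1} f_i$, with the coefficients $\alpha_{i,j}$ for $i \ge 1$ inherited from those of $g_{i-1}$.

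The main obstacle is the step of making $f_0 \in M$, because Lemma \ref{lem: key lemma} only controls the inner $(b-d)$ versus $(ad-bc)$ split of an expression already known to be in $M$, whereas a priori we only know $f$ (not $f_0$) is in $M$. One must reconcile the inner modification $(\varphi,\psi)\to(\varphi+\gamma(ad-bc),\psi-\gamma(b-d))$ of Lemma \ref{lem: key lemma} with the outer reshuffling between $f_0$ and $(a-c)g$ coming from the $(ad-bc)=d(a-c)-c(b-d)$ syzygy. I expect this to require an extension of the monomial-path bookkeeping used in the proof of Lemma \ref{lem: key lemma} to a three-direction lattice that tracks $(a-c)$, $(b-d)$, and $(ad-bc)$ factors simultaneously, so that chains of monomial cancellations can be normalized consistently across all three axes.
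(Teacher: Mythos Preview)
Your proposal has a genuine gap, and you correctly identify it yourself: arranging $f_0 \in M$ via gauge freedom is the step you have not carried out, and the ``three-direction lattice'' extension you sketch is speculative. Lemma \ref{lem: key lemma} only lets you massage the $(b-d)/(ad-bc)$ split of an expression already known to lie in $M$; it does not help you show that a piece of $f$ lands in $M$ in the first place. So as written, the inductive scheme does not close.

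What you are missing is much simpler than any extension of the path argument. You already normalize so that $\alpha_{i,j}\in\C[a,c]$ for $i\ge 1$. The key observation is that each of $(b-d)$ and $(ad-bc)$ contributes exactly $1$ to the combined $(b,d)$-degree, while $(a-c)$ and anything in $\C[a,c]$ contribute $0$. Hence for $i\ge 1$ every monomial of $f_i=\sum_j \alpha_{i,j}(a-c)^i(b-d)^{d_1-i-j}(ad-bc)^j$ has $(b,d)$-degree \emph{exactly} $d_1-i$, while every monomial of $f_0$ has $(b,d)$-degree \emph{at least} $d_1$. Thus the $f_i$ are the homogeneous pieces of $f$ for the $(b,d)$-grading in degrees $<d_1$, and there can be no cancellation between different $f_i$'s. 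Since $M$ is a monomial ideal, $f\in M$ forces each graded piece $f_i$ ($i\ge 1$) to lie in $M$, and then $f_0=f-\sum_{i\ge 1}f_i\in M$ by subtraction. No induction on $d_1$, no gauge manipulation, and no use of Lemma \ref{lem: key lemma} are needed at this stage; that lemma is only invoked later (in Proposition \ref{prop: all terms in M}) to further decompose each $f_i$.
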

\begin{proof}
    Consider $f \in (a-c, b-d)^{d_1}$. Following Lemma \ref{alphaac}, we can express $f$ as a linear combination of products of $(a-c), (b-d)$, and $(ad-bc)$,

$$f = \sum_{i,j} \alpha_{i,j} (a-c)^i (b-d)^{d_1-i-j} (ad-bc)^j.$$ Further, we can assume that every coefficient of a term with a factor of $(a-c)$ depends only on $a$ and $c$, because for any term with coefficient $\alpha$ with $i>0$, we can write
$$
\alpha(a-c)^i (b-d)^{d_1-i-j} (ad-bc)^j$$
$$= \left[\alpha(a-c) \right](a-c)^{i-1} (b-d)^{d_1-i-j} (ad-bc)^j$$
$$= \left[\alpha_1(a-c) + \alpha_2(b-d) + \alpha_3(ad-bc)\right](a-c)^{i-1} (b-d)^{d_1-i-j} (ad-bc)^j$$
where $\alpha_1$ only depends on $a$ and $c$ by Lemma \ref{alphaac}. We can continue this process by induction until all the modified coefficients $\alpha_{i, j}$ only depend on $a$ and $c$.

Now we will group terms of $f$ by their combined $(b,d)$ degree. For all $i>0$, since $\alpha_{i,j}$ depends only on $a$ and $c$, we know that the combined $b,d$ degree of every monomial of
$$
\alpha_{i,j} (a-c)^i (b-d)^{d_1-i-j} (ad-bc)^j
$$
is $k = \deg_t(m) = d_1-i$. If any monomial from this term cancels, it must cancel with another term with the same $(b,d)$ degree. So in fact, every monomial with $\deg_t(m) = k$ must come from the sum
$$
f_i = \sum_{j=0}^{k} \alpha_{i,j} (a-c)^i (b-d)^{k-j} (ad-bc)^j$$
$$ = (a-c)^i \sum_{j=0}^{k} \alpha_{i,j}(b-d)^{k-j} (ad-bc)^j
$$
with fixed $i$. In other words,  monomials can cancel within each $f_i$, but not between them. This implies that for all $i > 0$, each $f_i \in M$, since after internal cancellation, each $f_i$ is a sum of monomials in $M$.

Since $f = \sum_i f_i$ is in $M$ and $f_i \in M$ for all $i>0$,

$$f_0 = \sum_{j=0}^{d_1} \alpha_{i,j} (b-d)^{k-j} (ad-bc)^j$$

must also be in $M$. 

\end{proof}

By Corollary \ref{b-d}, we know that 

$$f_i = (a-c)^i\sum_{j=0}^{k} \alpha_{i,j}(b-d)^{k-j} (ad-bc)^j \in M$$
implies that
\begin{equation}
\label{eq: sum}
\sum_{j=0}^{k} \alpha_{i,j}(b-d)^{k-j} (ad-bc)^j \in M.
\end{equation}

Now we fix $i$ and look at a single $f_i$, which we will call $f$ to avoid unnecessary indices. Also let $k = d_1-i$ as before.

\begin{prop}
\label{prop: all terms in M}
    If 
    $$f = \sum_{j=0}^k \alpha_{j}(b-d)^{k-j} (ad-bc)^j \in M,$$
    for some $\alpha_j \in R$, then there exists $\alpha_j' \in R$ such that
    $$f = \sum_{j} \alpha_{j}'(b-d)^{k-j} (ad-bc)^j$$
    and each term $\alpha_{j}'(b-d)^{k-j} (ad-bc)^j$ of the sum is in $M$.
\end{prop}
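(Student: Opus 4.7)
The plan is to proceed by induction on $k$. The base case $k=0$ is a single term $\alpha_0 (b-d)^{d_1-i} \in M$, and iterated application of Corollary \ref{b-d} gives $\alpha_0 \in M$, so we may take $\alpha_0' = \alpha_0$.

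For the inductive step, I first establish the following strengthening of Lemma \ref{lem: key lemma}: if $\varphi (b-d)^\ell + \psi(ad-bc) \in M$ for some $\ell \ge 1$, then there exists $\gamma^* \in R$ with $\varphi + \gamma^*(ad-bc) \in M$. This follows by iterating Lemma \ref{lem: key lemma} a total of $\ell$ times, each application peeling off one factor of $(b-d)$: the first application gives $\gamma_1$ with $\varphi(b-d)^{\ell-1} + \gamma_1(ad-bc) \in M$, the second gives $\gamma_2$ with $\varphi(b-d)^{\ell-2} + \gamma_2(ad-bc) \in M$, and so on until we reach $\varphi + \gamma^*(ad-bc) \in M$. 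Applying this strengthening to the grouping $f = \alpha_0 (b-d)^{d_1-i} + \psi(ad-bc)$, where $\psi = \sum_{j\ge 1}\alpha_j (b-d)^{d_1-i-j}(ad-bc)^{j-1}$, yields $\gamma^* \in R$ with $\alpha_0 + \gamma^*(ad-bc) \in M$.

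I then apply the form-preserving move $(\alpha_0, \alpha_1) \mapsto (\alpha_0 + \gamma^*(ad-bc),\, \alpha_1 - \gamma^*(b-d))$ to the whole sum; as verified by direct expansion, this fixes $f$ and keeps the decomposition in the prescribed form. After the move, the new coefficient $\alpha_0'$ lies in $M$, and Corollary \ref{b-d} then puts $\alpha_0'(b-d)^{d_1-i}$ in $M$. Subtracting this term leaves $\sum_{j\ge 1}\alpha_j'(b-d)^{d_1-i-j}(ad-bc)^j \in M$, a sum with one fewer term to which the peeling procedure is iterated.

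The main obstacle is that the remaining sum starts at $j=1$ rather than $j=0$, so the strengthened lemma above does not apply verbatim at the next step — the colon ideal $M : (ad-bc)^\ell$ that governs successive peelings need not be monomial, so Lemma \ref{lem: key lemma} does not apply to it directly. The natural remedy is to strengthen the inductive statement itself, proving the analogous result for sums of the form $\sum_{j=\ell}^{k}\alpha_j(b-d)^{K-j}(ad-bc)^j \in M$ with arbitrary starting index $\ell \ge 0$, and to induct on the number of terms $k - \ell + 1$. At each step the relevant cancellation analysis carries through by tracking the accumulating $(ad-bc)^\ell$ factor through the iterated Lemma \ref{lem: key lemma}, yielding the desired decomposition.
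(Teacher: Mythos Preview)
Your induction scheme and the strengthening of Lemma~\ref{lem: key lemma} (peeling off $(b-d)$ factors one at a time) are correct, and you correctly isolate the obstacle: after removing the $j=0$ term you are left with a sum starting at $j=1$. But your diagnosis of why this is hard is mistaken, and your proposed remedy does not close the gap.

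The colon ideal $M:(ad-bc)$ \emph{is} monomial. Since $ad-bc$ is homogeneous of bidegree $(1,1)$ in the grading by $(\deg_1,\deg_2)$ and $R$ is a domain, one checks that $g(ad-bc)\in M$ forces each bihomogeneous piece of $g$ to have bidegree at least $(d_1-1,d_2-1)$; hence $M:(ad-bc)=(a,b)^{d_1-1}\cap(c,d)^{d_2-1}=:N$, and iterating, $M:(ad-bc)^\ell=(a,b)^{d_1-\ell}\cap(c,d)^{d_2-\ell}$. This is exactly the observation the paper uses. With it, your induction closes immediately: factor $(ad-bc)$ out of $\sum_{j\ge1}\alpha_j'(b-d)^{k-j}(ad-bc)^j$, obtain a sum of the same shape lying in $N$, apply the inductive hypothesis there (the statement should be quantified over all ideals of this form), and multiply back.

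Without this observation, your ``tracking'' fix stalls. Iterating Lemma~\ref{lem: key lemma} on $\alpha_\ell(b-d)^{K-\ell}(ad-bc)^\ell+\psi(ad-bc)^{\ell+1}\in M$ produces some $\gamma^*$ with $\alpha_\ell(ad-bc)^\ell+\gamma^*(ad-bc)\in M$, but the form-preserving move on $(\alpha_\ell,\alpha_{\ell+1})$ needs a correction of the shape $\delta(ad-bc)^{\ell+1}$, i.e.\ $\gamma^*$ divisible by $(ad-bc)^\ell$, which nothing guarantees. The paper's own argument is organized differently---it introduces redundant coefficients $\varphi_j,\psi_j$ and applies the inductive hypothesis to a shorter sum with composite coefficients---but it pivots on the same point: the condition $[\varphi_j(b-d)+\psi_j(ad-bc)](ad-bc)^j\in M$ is rewritten as $\varphi_j(b-d)+\psi_j(ad-bc)\in N_j$, and Lemma~\ref{lem: key lemma} is then invoked in the monomial ideal $N_j$.
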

\begin{proof}
    We can rewrite $f$ as 
    \begin{equation}
        \label{eq: redundant sum}
        f = \sum_{j=0}^{k-1} \left[ \varphi_j (b-d) + \psi_j(ad-bc) \right] (b-d)^{k-1-j}(ad-bc)^j,
    \end{equation}
    
    where initially $\varphi_j = \alpha_j$ for all $0\leq j \leq k-1$, $\psi_{k-1} = \alpha_k$, and the rest of the $\psi_j$'s are 0. Essentially we have taken the previous sum for $f$ and added some redundant terms; in particular, $\psi_j$ and $\varphi_{j+1}$ are coefficients for like terms for $0 \leq j \leq k-1$. So we have two relations we can use to modify the coefficients of \eqref{eq: redundant sum} without changing the sum: 
    \begin{eqnarray}
    \label{eq: Relation 1}
        \psi_j \rightarrow \psi_j + \gamma,& \qquad \varphi_{j+1} \rightarrow \varphi_{j+1} - \gamma\\
    \label{eq: Relation 2}
        \varphi_j \rightarrow \varphi_j + \gamma(ad-bc),& \quad \psi_j \rightarrow \psi_j - \gamma(b-d).
    \end{eqnarray}
The first comes from the redundant coefficients, and the second is the same relation \eqref{relation} used in Lemma \ref{lem: key lemma}.

    We will induct on $k = d_1-i$. If $k=0$, then $i=d_1$, and our sum \eqref{eq: sum} is just the single term $\alpha_{0}(a-c)^{d_1}$, which is in $M$ by assumption. Now assume by induction that if 
    $$
    \sum_{j=0}^{k-1} \alpha_{j} (b-d)^{k-1-j} (ad-bc)^{j}
    $$ 
    is in $M$, then we can modify the coefficients using only \eqref{eq: Relation 2} to get all terms \\
    $\alpha_{j} (b-d)^{k-1-j} (ad-bc)^{j}$ in $M$. We can apply this to \eqref{eq: redundant sum} with $\alpha_j = \varphi_j (b-d) + \psi_j(ad-bc).$ Using \eqref{eq: Relation 2} on the $\alpha_j$'s actually just amounts to using \eqref{eq: Relation 1} on the $\varphi_j$'s and $\psi_j$'s, as adding $\gamma(ad-bc)$ to $\alpha_j$ is the same as adding $\gamma$ to $\psi_j$, and subtracting $\gamma(b-d)$ from $\alpha_j$ is the same as subtracting $\gamma$ from $\varphi_j$. So the inductive hypothesis implies that up to \eqref{eq: Relation 1}, we can get 
    $$\left[\varphi_j(b-d) + \psi_j(ad-bc)\right](b-d)^{k-1-j}(ad-bc)^j$$
    in $M$ for all $j$. By Corollary \ref{b-d}, this implies that 
    \begin{equation}
    \label{eq: using key lemma}
    \left[\varphi_j(b-d) + \psi_j(ad-bc)\right](ad-bc)^j \in M. 
    \end{equation}
    Notice that multiplying a monomial (polynomial) by $(ad-bc)$ raises its combined $(a,b)$ degree and $(c,d)$ degree each by one. So (\ref{eq: using key lemma}) is in $M = (a,b)^{d_1} \cap (c,d)^{d_2}$ if and only if $\left[\varphi_j(b-d) + \psi_j(ad-bc)\right] \in N = (a,b)^{d_1-j} \cap (c,d)^{d_2-j}$. Now we apply Lemma \ref{lem: key lemma} on $N$ to get both $\varphi_j(b-d)$ and  $\psi_j(ad-bc)$ in $N$, and then when multiply by $(ad-bc)^j$, we get that both terms of (\ref{eq: using key lemma}) are in $M$.
    
    So we have shown that if 
    $$f = \sum_{j=0}^{k-1} \left[ \varphi_j (b-d) + \psi_j(ad-bc) \right] (b-d)^{k-1-j}(ad-bc)^j,$$
    up to relations \eqref{eq: Relation 1} and \eqref{eq: Relation 2}, we can get all terms of this sum to be in $M$. Now simply recombine like terms to get 
    $$f = \sum_{j} \alpha_{j}(b-d)^{d_1-i-j} (ad-bc)^j$$
    with all terms in $M$ as desired.
    
\end{proof}

Now we can lay out a basis for the ideal

$$J(d_1,d_2) = (a,b)^{d_1} \cap (c,d)^{d_2} \cap (a-c,b-d)^{d_1}$$

over $\C$.

\begin{proof}[Proof of Proposition \ref{prop: basis}]

We know by Proposition \ref{prop: all terms in M} that $J(d_1,d_2)$ is generated by polynomials of the form
$$\alpha_{i,j} (a-c)^i (b-d)^{d_1-i-j} (ad-bc)^j,$$ 
where $\alpha_{i,j}$ only depends on $a,c$ for $i > 0$.
Here $0\le j\le d_1$ and $0\le i\le d_1-j$.
So as a vector space, $J(d_1,d_2)$ is generated by 
$$a^{\alpha}b^{\beta}c^{\gamma}d^{\delta}(a-c)^i (b-d)^{d_1-i-j} (ad-bc)^j$$
with the conditions that $\alpha + \beta + j \geq d_1$, $\gamma + \delta + j \geq d_2$, and $\beta=\delta=0$ if $i>0$. Among these generators, the only kind of relations remaining are those that come from the fact that $bc = ad - (ad-bc)$.

If $i>0$, then this relation is irrelevant, and we get linearly independent generators of the form
$$m(a,c)A_{i,j}.$$
If $i=0$ and $\beta,\gamma>0$, then we can write
$$a^{\alpha}b^{\beta}c^{\gamma}d^{\delta} = a^{\alpha+1}b^{\beta-1}c^{\gamma-1}d^{\delta+1} + a^{\alpha}b^{\beta-1}c^{\gamma-1}d^{\delta}(ad-bc).$$
Continue reducing $bc$ this way until we end up in one of the following situations:
\begin{enumerate}
    \item $\gamma = 0$, $\beta \neq 0$, in which case we are left with 
    $$m(a,b,d)B_{\beta,j}$$
    with $0 \leq j \leq d_1$ and $1 \leq \beta \leq d_1-j$.
    \item $\beta = 0$, $\gamma  \neq 0$, in which case we are left with 
    $$m(a,c,d)C_{\gamma,j}$$
    with $0 \leq j \leq d_1$ and $1 \leq \gamma \leq d_2-j$.
    \item $\beta = 0$, $\gamma = 0$, in which case we are left with 
    $$m(a,d)D_j$$
    with $0 \leq j \leq d_1$.
    \item The exponent of $(ad-bc)$ is greater than or equal to $d_1$, in which case we are left with a linear combination of terms of the form
    $$m(a,b,c,d)(ad-bc)^{d_1}.$$
\end{enumerate}

\end{proof}

Theorem \ref{thm: generators} immediately follows from Proposition \ref{prop: basis}.

\subsection{Hilbert Series Calculation}
\label{sec: Hilb series}

Let us compute the Hilbert series using the basis in Proposition \ref{prop: basis}. 

\begin{lemma}
\label{lem: s}
We have 
$$
\sum_{\alpha+\beta\ge s}q^{\alpha}t^{\beta}=\frac{q^s}{(1-q)(1-t/q)}+\frac{t^s}{(1-t)(1-q/t)}.
$$
\end{lemma}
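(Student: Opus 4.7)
The identity is a direct geometric-series computation, and the natural plan is simply to split the sum by antidiagonals and simplify.

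I would first group the lattice points $(\alpha,\beta)$ with $\alpha,\beta\ge 0$ and $\alpha+\beta\ge s$ according to the value of $k=\alpha+\beta$. For each fixed $k\ge s$, the inner sum is a standard geometric progression:
$$\sum_{\alpha+\beta=k}q^{\alpha}t^{\beta}=q^{k}+q^{k-1}t+\cdots+t^{k}=\frac{q^{k+1}-t^{k+1}}{q-t}.$$
Summing over $k\ge s$ as two separate geometric series yields
$$\sum_{\alpha+\beta\ge s}q^{\alpha}t^{\beta}=\frac{1}{q-t}\left(\frac{q^{s+1}}{1-q}-\frac{t^{s+1}}{1-t}\right).$$

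To finish, I would rewrite each term in the claimed form. Pulling a factor of $q$ out of the first denominator gives
$$\frac{q^{s+1}}{(q-t)(1-q)}=\frac{q^{s}}{(1-t/q)(1-q)},$$
and similarly pulling a factor of $t$ out of the second one (absorbing the sign from $q-t=-(t-q)$) gives
$$\frac{-t^{s+1}}{(q-t)(1-t)}=\frac{t^{s}}{(1-q/t)(1-t)},$$
which together yield the right-hand side of the lemma.

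There is no real obstacle: the whole argument is a two-line geometric-series identity followed by rewriting the denominators. One only needs to be a little careful with the sign when passing from $q-t$ to $t-q$, and to verify that the formal rational expressions $(1-t/q)$ and $(1-q/t)$ are being interpreted consistently with the convergence conventions under which the original sum makes sense as a power series (equivalently, working in the field of rational functions in $q,t$ where both sides are polynomial identities clearing denominators).
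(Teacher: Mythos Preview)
Your proof is correct. Both arguments are elementary geometric-series manipulations, but they use different decompositions. The paper splits the sum by the value of $\beta$: for $\beta<s$ one has $\alpha\ge s-\beta$, and for $\beta\ge s$ one has $\alpha\ge 0$, giving
\[
\frac{q^s-t^s}{(1-q)(1-t/q)}+\frac{t^s}{(1-q)(1-t)},
\]
which then requires the auxiliary identity $\frac{1}{(1-q)(1-t)}-\frac{1}{(1-q)(1-t/q)}=\frac{1}{(1-t)(1-q/t)}$ to reach the stated form. Your antidiagonal decomposition by $k=\alpha+\beta$ respects the $q\leftrightarrow t$ symmetry of the target expression from the start, so the two terms fall out directly without that extra partial-fraction step. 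The paper's route is no harder, but yours is slightly more streamlined for this particular identity.
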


\begin{proof}
We have 
$$
\sum_{\alpha+\beta\ge s}q^{\alpha}t^{\beta}=\sum_{\beta=0}^{s-1}\frac{q^{s-\beta}t^{\beta}}{(1-q)}+\sum_{\beta=s}^{\infty}\frac{t^\beta}{1-q}=
$$
$$
\frac{q^s-t^s}{(1-q)(1-t/q)}+\frac{t^s}{(1-q)(1-t)}.
$$
Now we can use the identity
$$
\frac{1}{(1-q)(1-t)}-\frac{1}{(1-q)(1-t/q)}=\frac{1}{(1-t)(1-q/t)}.
$$
\end{proof}

\begin{thm}
The bigraded Hilbert series of $J(d_1,d_2)$ is equal to
$$
\frac{q^{2d_1+d_2}}{(1-q)^2(1-t/q)(1-t/q^2)}+\frac{t^{2d_1+d_2}}{(1-t)^2(1-q/t)(1-q/t^2)}+
$$
$$
\frac{q^{d_1}t^{d_2}(1+t)}{(1-q)(1-t)(1-q/t)(1-t^2/q)}+\frac{q^{d_2}t^{d_1}(1+q)}{(1-t)(1-q)(1-t/q)(1-q^2/t)}.
$$
\end{thm}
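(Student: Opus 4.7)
The plan is to compute the bigraded Hilbert series directly from the $\C$-basis of $J(d_1,d_2)$ furnished by Proposition \ref{prop: basis}. The basis splits into five families: $m(a,c)\,A_{i,j}$, $m(a,b,d)\,B_{i,j}$, $m(a,c,d)\,C_{i,j}$, $m(a,d)\,D_j$ for $0 \le j \le d_1 - 1$, and for $j = d_1$ the elements $a^\alpha b^\beta c^\gamma d^\delta (ad-bc)^{d_1}$ with $\gamma + \delta \ge d_2 - d_1$. Since $a,c$ have bidegree $q$ and $b,d$ have bidegree $t$, each family's Hilbert series is a product of a factor $\frac{1}{(1-q)^r (1-t)^s}$ (from the free polynomial ring in the monomial $m$) with the sum of $q^\alpha t^\beta$ over the bidegrees of the labeled generators in that family. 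The Hilbert series of $J(d_1,d_2)$ is then the sum of these five contributions.

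For the first four families the relevant double sums over $(i,j)$ are finite, and evaluating them is a pair of nested finite geometric series: the inner sum in $i$ factors as a sum of powers of $q/t$ or $t/q$, and the outer sum in $j$ factors as a sum of powers of $1/q$, $1/t$, or mixed ratios like $t/q^2$ or $q/t^2$. The $j = d_1$ family, after factoring out $(ad-bc)^{d_1}$ of bidegree $q^{d_1}t^{d_1}$, reduces to the free polynomial ring in $a,b$ times the constrained sum $\sum_{\gamma+\delta \ge d_2 - d_1} q^\gamma t^\delta$, which by Lemma \ref{lem: s} equals
$$\frac{q^{d_2-d_1}}{(1-q)(1-t/q)} + \frac{t^{d_2-d_1}}{(1-t)(1-q/t)}.$$
Multiplying by $\frac{q^{d_1}t^{d_1}}{(1-q)(1-t)}$ already produces two summands of the ``mixed-corner'' shape $q^{d_i}t^{d_j}$ that appear in the target formula.

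The main obstacle is the algebraic simplification required to collapse the five contributions into the four-term answer. The target has denominators $(1-t/q)(1-t/q^2)$, $(1-q/t)(1-q/t^2)$, $(1-q/t)(1-t^2/q)$, and $(1-t/q)(1-q^2/t)$, signaling that one must regroup terms according to the four extremal corners $(2d_1+d_2, 0)$, $(0, 2d_1+d_2)$, $(d_1,d_2)$, $(d_2,d_1)$ of the generator trapezoid from Proposition \ref{prop: bijection}. My expectation is that the cleanest route is to parametrize each of the four finite double sums using the identity $\sum_{k=0}^{N-1} x^k = \frac{1}{1-x} - \frac{x^N}{1-x}$, splitting each family into a ``stable'' piece that is independent of the truncation exponents and a ``corner'' piece carrying all dependence on $d_1, d_2$. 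One then verifies that the stable pieces cancel across families while the corner pieces assemble into precisely the four terms of the target. The computation is routine but requires careful bookkeeping of signs and of which corner each surviving piece belongs to; this is where almost all the work lies.
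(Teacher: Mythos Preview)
Your proposal is correct and follows essentially the same approach as the paper: both compute the Hilbert series from the $\C$-basis of Proposition \ref{prop: basis}, evaluate each family via nested finite geometric series (using Lemma \ref{lem: s} for the $j=d_1$ piece), and then regroup the resulting rational functions according to the monomials $q^{2d_1+d_2}$, $t^{2d_1+d_2}$, $q^{d_1}t^{d_2}$, $q^{d_2}t^{d_1}$. The only organizational difference is that the paper merges the $D_j$ family into the $B_{i,j}$ family and separately into the $C_{i,j}$ family (then subtracts the overcount), because the unions $\{m(a,b,d)B_{i,j}\}\cup\{m(a,d)D_j\}$ and $\{m(a,c,d)C_{i,j}\}\cup\{m(a,d)D_j\}$ each admit a clean description of the form $\alpha+\beta\ge s$ to which Lemma \ref{lem: s} applies directly---a small shortcut over treating $B$, $C$, $D$ separately.
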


\begin{proof}
We compute the contribution of various basis elements.

\noindent {\bf 1.} The contribution of $m(a,c)A_{i,j},\ j\le d_1-1$ equals
$$
\frac{1}{(1-q)^2}\sum_{j=0}^{d_1-1}\sum_{i=1}^{d_1-j}q^{d_1+d_2-j+i}t^{d_1-i}=
$$
$$
\frac{1}{(1-q)^2}\sum_{j=0}^{d_1-1}\frac{q^{d_1+d_2-j+1}t^{d_1-1}-q^{2d_1+d_2-2j+1}t^{j-1}}{(1-q/t)}=
$$
$$
\frac{q^{d_1+d_2+1}t^{d_1-1}-q^{d_2+1}t^{d_1-1}}{(1-q)^2(1-q^{-1})(1-q/t)}-\frac{q^{2d_1+d_2+1}t^{-1}-q^{d_2+1}t^{d_1-1}}{(1-q)^2(1-q/t)(1-t/q^2)}.
$$

\noindent {\bf 2.} The set $m(a,b,d)B_{i,j}\cup m(a,d)D_{j},\ j\le d_1-1$ consists of elements
$$
a^{\alpha}b^{\beta}d^{\gamma}(b-d)^{d_1-j}(ad-bc)^{j},\ \alpha+\beta\ge d_1-j,\ \gamma\ge d_2-j,
$$
so by Lemma \ref{lem: s} we get the Hilbert series
$$
\sum_{j=0}^{d_1-1}\left[\frac{q^{d_1-j}}{(1-q)(1-t/q)}+\frac{t^{d_1-j}}{(1-t)(1-q/t)}\right]\frac{q^{j}t^{d_1+d_2-j}}{(1-t)}=
$$
$$
\sum_{j=0}^{d_1-1}\left[\frac{q^{d_1}t^{d_1+d_2-j}}{(1-t)(1-q)(1-t/q)}+\frac{q^{j}t^{2d_1+d_2-2j}}{(1-t)^2(1-q/t)}\right]=
$$
$$
\frac{q^{d_1}t^{d_1+d_2}-q^{d_1}t^{d_2}}{(1-t^{-1})(1-t)(1-q)(1-t/q)}+\frac{t^{2d_1+d_2}-q^{d_1}t^{d_2}}{(1-q/t^2)(1-t)^2(1-q/t)}
$$
\noindent{\bf 3.} Similarly, for $m(a,c,d)C_{i,j}\cup m(a,d)D_{j},\ j\le d_1-1$ we get
$$
a^{\alpha}c^{\beta}d^{\gamma}(b-d)^{d_1-j}(ad-bc)^{j},\ \alpha\ge d_1-j,\ \beta+\gamma\ge d_2-j,
$$
so the Hilbert series equals
$$
\sum_{j=0}^{d_1-1}\left[\frac{q^{d_2-j}}{(1-q)(1-t/q)}+\frac{t^{d_2-j}}{(1-t)(1-q/t)}\right]\frac{q^{d_1}t^{d_1}}{(1-q)}=
$$
$$
\frac{q^{d_1+d_2}t^{d_1}-q^{d_2}t^{d_1}}{(1-q^{-1})(1-q)^2(1-t/q)}+\frac{q^{d_1}t^{d_1+d_2}-q^{d_1}t^{d_2}}{(1-t^{-1})(1-q)(1-t)(1-q/t)}.
$$
\noindent {\bf 4.} We overcount by $m(a,d)D_{j},\ j\le d_1-1$ which contribute
$$
\frac{1}{(1-q)(1-t)}\sum_{j=0}^{d_1-1}q^{d_1}t^{d_1+d_2-j}=\frac{q^{d_1}t^{d_1+d_2}-q^{d_1}t^{d_2}}{(1-t^{-1})(1-q)(1-t)}
$$
{\bf 5.} For $j=d_1$ we have special terms
$$
a^{\alpha}b^{\beta}c^{\gamma}d^{\delta}(ad-bc)^{d_1},\ \gamma+\delta\ge d_2-d_1,
$$
which contribute
$$
\frac{1}{(1-q)(1-t)}\left[\frac{q^{d_2-d_1}}{(1-q)(1-t/q)}+\frac{t^{d_2-d_1}}{(1-t)(1-q/t)}\right] q^{d_1}t^{d_1}.
$$
{\bf 6.} Finally, we collect the coefficients at similar terms: 
$$
q^{d_1+d_2}t^{d_2}\left[\frac{qt^{-1}}{(1-q)^2(1-q^{-1})(1-q/t)}+\frac{1}{(1-q^{-1})(1-q)^2(1-t/q)}\right]=0;
$$
$$
q^{d_2}t^{d_1}[-\frac{1}{(1-q)^2(1-q^{-1})(1-q/t)}+\frac{1}{(1-q)^2(1-q/t)(1-t/q^2)}-
$$
$$
\frac{1}{(1-q^{-1})(1-q)^2(1-t/q)}+\frac{1}{(1-q)(1-t)(1-q)(1-t/q)}]=\frac{q^{d_2}t^{d_1}(1+q)}{(1-t)(1-q)(1-t/q)(1-q^2/t)}.
$$
$$
q^{2d_1+d_2}\left[-\frac{qt^{-1}}{(1-q)^2(1-q/t)(1-t/q^2)}\right]+\frac{t^{2d_1+d_2}}{(1-q/t^2)(1-t)^2(1-q/t)}=
$$
$$
\frac{q^{2d_1+d_2}}{(1-q)^2(1-t/q)(1-t/q^2)}+\frac{t^{2d_1+d_2}}{(1-t)^2(1-q/t)(1-q/t^2)};
$$
$$
q^{d_1}t^{d_1+d_2}[\frac{1}{(1-t^{-1})(1-t)(1-q)(1-t/q)}+\frac{1}{(1-t^{-1})(1-q)(1-t)(1-q/t)}-
$$
$$\frac{1}{(1-t^{-1})(1-q)(1-t)}]=0;
$$
$$
q^{d_1}t^{d_2}[-\frac{t^{-1}}{(1-t^{-1})(1-t)(1-q)(1-t/q)}-\frac{qt^{-2}}{(1-q/t^2)(1-t)^2(1-q/t)}-$$ 
$$
\frac{t^{-1}}{(1-t^{-1})(1-q)(1-t)(1-q/t)}+\frac{t^{-1}}{(1-t^{-1})(1-q)(1-t)}+\frac{1}{(1-q)(1-t)(1-t)(1-q/t)}
]=
$$
$$
\frac{q^{d_1}t^{d_2}(1+t)}{(1-q)(1-t)(1-q/t)(1-t^2/q)}.
$$
\end{proof}

\section{Braid Recursion}
\label{sec: recursion}

In this section we compute Khovanov-Rozansky homology for the link $L_{\gamma}$ associated to $\gamma$ as in \eqref{gamma def} for $n=3$. It is the closure of the braid $\beta_{d_1,d_2} = (\FT_2)^{d_2-d_1} \cdot (\FT_3)^{d_1}$ where $\FT_2=\sigma_1^{2}$ and $\FT_3=(\sigma_1^2)\sigma_2\sigma_1^2\sigma_2$ are the full twist braids on two and three strands respectively.
We will also use the Jucys-Murphy braids $\JM_2=\FT_2$ and $\JM_3=\FT_2^{-1}\FT_3=\sigma_2\sigma_1^2\sigma_2$, so that $\beta_{d_1,d_2}=\JM_2^{d_2}\JM_3^{d_1}$.
It is well-known that $\JM_2$ and $\JM_3$ commute in the braid group.

We use the recursion for triply graded Khovanov-Rozansky homology defined by Elias and Hogancamp in \cite{torus links}, as described in \cite{Link homology}. Each braid-like diagram represents a complex of Soergel bimodules, where crossings correspond to Roquier complexes, and stacking a complex above (or below) represents taking a tensor product with that complex on the right (or left respectively). The $K_n$ are certain complexes of Soergel bimodules defined in \cite{torus links} that satisfy the relations below.

\begin{center}
\begin{tikzpicture}
\draw (-1,1.25) node {$(a)$};
\draw (0,0)--(0,1)--(-0.4,1)--(-0.4,1.5)--(0.4,1.5)--(0.4,1)--(0,1);
\draw (0,1.5)--(0,2.5);
\draw (0,1.25) node {$K_1$};
\draw (1,1.25) node {=};
\draw (1.5,0)--(1.5,2.5);
\end{tikzpicture}
\quad
\begin{tikzpicture}
\draw (-1,1.25) node {(b)};
\draw  (-0.5,1)--(-0.5,1.5)--(0.5,1.5)--(0.5,1)--(-0.5,1);
\draw (-0.4,0)--(-0.4,1);
\draw (0,0.5) node {$\cdots$};
\draw (0.4,0)--(0.4,1);
\draw (0,1.25) node {$K_n$};
\draw (-0.4,1.5)--(-0.4,2.5);
\draw (0.4,1.5)--(0.4,2.5);
\draw (-0.2,2.5)..controls (-0.2,2) and (0.2,2)..(0.2,1.5);
\draw[white,line width=3] (-0.2,1.5)..controls (-0.2,2) and (0.2,2)..(0.2,2.5);
\draw (-0.2,1.5)..controls (-0.2,2) and (0.2,2)..(0.2,2.5);
\draw (1,1.25) node {=};
\draw  (1.5,1)--(1.5,1.5)--(2.5,1.5)--(2.5,1)--(1.5,1);
\draw (1.6,0)--(1.6,1);
\draw (2,0.5) node {$\cdots$};
\draw (2.4,0)--(2.4,1);
\draw (2,1.25) node {$K_n$};
\draw (1.6,1.5)--(1.6,2.5);
\draw (2.4,1.5)--(2.4,2.5);
\draw (2,2) node {$\cdots$};
\draw (3,1.25) node {=};
\draw  (3.5,1)--(3.5,1.5)--(4.5,1.5)--(4.5,1)--(3.5,1);
\draw (3.6,0)--(3.6,1);
\draw (4,2) node {$\cdots$};
\draw (4.4,0)--(4.4,1);
\draw (4,1.25) node {$K_n$};
\draw (3.6,1.5)--(3.6,2.5);
\draw (4.4,1.5)--(4.4,2.5);
\draw (3.8,1)..controls (3.8,0.5) and (4.2,0.5)..(4.2,0);
\draw[white,line width=3] (3.8,0)..controls (3.8,0.5) and (4.2,0.5)..(4.2,1);
\draw (3.8,0)..controls (3.8,0.5) and (4.2,0.5)..(4.2,1);
\end{tikzpicture}
\quad
\begin{tikzpicture}
\draw (-1,1.25) node {(c)};
\draw  (-0.5,1)--(-0.5,1.5)--(0.5,1.5)--(0.5,1)--(-0.5,1);
\draw (-0.4,0)--(-0.4,1);
\draw (-0.1,0.5) node {$\cdots$};
\draw (-0.1,2) node {$\cdots$};
\draw (0,1.25) node {$K_{n+1}$};
\draw (-0.4,1.5)--(-0.4,2.5);
\draw (0.4,1.5)..controls (0.4,2) and (1,2)..(1,1.5);
\draw (0.4,1)..controls (0.4,0.5) and (1,0.5)..(1,1);
\draw (0.2,0)--(0.2,1);
\draw (0.2,1.5)--(0.2,2.5);
\draw (1,1)--(1,1.5);
\draw (2,1.25) node {$=(t^n+a)$};
\draw  (3,1)--(3,1.5)--(4,1.5)--(4,1)--(3,1);
\draw (3.2,0)--(3.2,1);
\draw (3.8,0)--(3.8,1);
\draw (3.5,0.5) node {$\cdots$};
\draw (3.5,2) node {$\cdots$};
\draw (3.5,1.25) node {$K_{n}$};
\draw (3.2,1.5)--(3.2,2.5);
\draw (3.8,1.5)--(3.8,2.5);
\end{tikzpicture}
\end{center}

\begin{center}
\begin{tikzpicture}
\draw (-1.5,1.25) node {(d)};
\draw  (-0.5,1)--(-0.5,1.5)--(0.5,1.5)--(0.5,1)--(-0.5,1);
\draw (0,1.25) node {$K_{n}$};
\draw  (-0.7,2.5)--(-0.7,1);
\draw (-0.4,1.5)--(-0.4,2.5);
\draw (0.4,1.5)--(0.4,2.5);
\draw  (-0.7,1)..controls (-0.7,0.5) and (0.7,0.5)..(0.7,0);
\draw[white,line width=3] (-0.4,1)--(-0.4,0);
\draw [white,line width=3]  (0.4,1)--(0.4,0);
\draw  (-0.4,1)--(-0.4,-1);
\draw    (0.4,1)--(0.4,-1);
\draw [white,line width=3]    (0.7,0)..controls (0.7,-0.5) and (-0.7,-0.5)..(-0.7,-1);
\draw  (0.7,0)..controls (0.7,-0.5) and (-0.7,-0.5)..(-0.7,-1);

\draw (1,1.25) node {$=t^{-n}$};
\draw (1.7,2)..controls (1.5,1) and (1.5,1)..(1.7,0);
\draw  (2,1)--(2,1.5)--(3,1.5)--(3,1)--(2,1);
\draw (2.5,1.25) node {$K_{n+1}$};
\draw (2.1,1.5)--(2.1,2.5);
\draw (2.3,1.5)--(2.3,2.5);
\draw (2.9,1.5)--(2.9,2.5);
\draw (2.1,1)--(2.1,-1);
\draw (2.3,1)--(2.3,-1);
\draw (2.9,1)--(2.9,-1);
\draw [->] (3.5,1.25)--(4.3,1.25);
\draw (4.5,1.25) node {$q$};
\draw  (5,1)--(5,1.5)--(6,1.5)--(6,1)--(5,1);
\draw (5.5,1.25) node {$K_{n}$};
\draw (5.1,1.5)--(5.1,2.5);
\draw (5.9,1.5)--(5.9,2.5);
\draw (5.1,1)--(5.1,-1);
\draw (5.9,1)--(5.9,-1);
\draw (4.7,2.5)--(4.7,-1);
\draw (6.5,2)..controls (6.7,1) and (6.7,1)..(6.5,0);
\end{tikzpicture}
\end{center}

For this section, we think of Khovanov-Rozansky homology $\HHH$ as a certain functor from complexes of Soergel bimodules to triply graded vector spaces over $\C$. For a braid, $\HHH(\beta)$ is $\HHH$ of the complex associated to its braid diagram. For some braids, we can use the above recursions to compute $\HHH$ by starting with its braid diagram and applying these rules until all strands are closed up. We refer to \cite{Link homology} and references therein for all details.

The three gradings of $\HHH(\beta)$ are typically denoted $Q,T,$ and $A$. These are related to the lowercase $q,t,$ and $a$ by the change of variables
$$q = Q^2, \quad t = T^2Q^{-2}, \quad a=AQ^{-2}.$$

\begin{defn}
    For any braid or braid-like diagram $\beta$, we say that $\HHH(\beta)$ is parity if it is supported in only even $T$ degrees. We will also say $\beta$ itself is parity if $\HHH(\beta)$ is parity.
\end{defn}

\begin{fact}[\cite{Link homology}]
    If both complexes on the right-hand side of step (d) are parity, then the left-hand side is also parity, and we can replace the map with addition (corresponding to a direct sum of modules). 
\end{fact}

Essentially, we do our calculations assuming that the right-hand side of (4) is parity. If we can successfully reduce $\beta$ down to complexes that are known to be parity, then we have shown that $\beta$ and all of the intermediate complexes were also parity, and we can compute $\HHH(\beta)$ along the way. We will often take the $a$-degree 0 component of $\HHH$, denoted $\HHH^{a=0}$.

\begin{thm}
\label{thm: parity}
For all $d_2\ge d_1$  $\HHH^{a=0}(\beta_{d_1,d_2})$ is parity.
\end{thm}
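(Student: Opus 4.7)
The plan is to prove parity of $\HHH^{a=0}(\beta_{d_1,d_2})$ by induction on $d_1$, using the Elias--Hogancamp recursion (a)--(d) displayed just above the statement. The base case $d_1 = 0$ is immediate: the braid $\beta_{0,d_2} = \JM_2^{d_2} = \sigma_1^{2d_2}$ has closure the disjoint union of the $(2,2d_2)$-torus link with an unknot (the third strand is unlinked), and $\HHH^{a=0}$ of a $(2,m)$-torus link is known to be parity from the standard computation in \cite{torus links}. The diagonal case $d_1 = d_2$ reduces to $\FT_3^{d_1}$, whose closure is the $(3,3d_1)$-torus link, again parity by the same reference.

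For the inductive step, I would insert the projector $K_3$ at the top of the closed braid for $\beta_{d_1,d_2}$, which is permissible because $K_3$ absorbs any further crossings on top of it by (b). The key ingredient is the Elias--Hogancamp categorification of the Jucys--Murphy elements: $\JM_n$ acts on $K_n$ by multiplication by a polynomial in $(q,t)$-eigenvalues. Thus one factor of $\JM_3$ placed below $K_3$ can be traded for $K_3$ times a polynomial shift, while applying step (d) (read right to left) peels a strand and converts $K_3$-pieces into $K_4$-pieces which can then be collapsed via step (c), producing factors $(t^n+a)$ and returning to $K_3$. After a finite number of such moves, the exponent of $\JM_3$ drops by one and the residual braid is of the form $\beta_{d_1-1,d_2'}$ plus pieces of lower strand count handled by the base case.

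By the inductive hypothesis each term appearing on the right-hand side of step (d) is parity, so the fact recalled just before the theorem (that parity on the right forces parity of the cone on the left, since the differential may then be replaced by addition) guarantees that $\HHH^{a=0}(\beta_{d_1,d_2})$ is parity. The main obstacle I anticipate is the bookkeeping: making precise the formula for $\JM_n\cdot K_n$ and verifying that every stray term produced by iterated application of (c) and (d) lands in a strictly simpler braid (smaller $d_1$, or two-strand torus case). Once the Elias--Hogancamp formulas are marshaled correctly, termination and parity follow by straightforward induction; I would expect this to proceed cleanly in the $n=3$ setting, and indeed the same scheme should, with more strands $K_n$, generalize to arbitrary $n$ as alluded to after Theorem \ref{thm: intro HHH}.
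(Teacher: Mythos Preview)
Your high-level plan and base cases are sound, but the inductive step contains two concrete errors that stop it from working. First, you cannot ``insert the projector $K_3$ at the top'' for free: by rule (a) only $K_1$ is the identity, whereas $K_3$ is a nontrivial complex of Soergel bimodules and stacking it on $\beta_{d_1,d_2}$ changes the closure. The recursion \emph{produces} $K_2$ and $K_3$ from $K_1$ via rule (d); it does not begin with them. Second, there are only three strands here, so there is no $K_4$ and no $\JM_4$; rule (d) in this setting yields only $K_1\cdot\JM_2\leadsto t^{-1}K_2+qt^{-1}K_1$ and $K_2\cdot\JM_3\leadsto t^{-2}K_3+qt^{-2}K_2$. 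Consequently your assertion that ``the residual braid is of the form $\beta_{d_1-1,d_2'}$'' is not supported by any available move: to touch a copy of $\JM_3$ with rule (d) you must already be holding $K_2$, not $K_1$ or $K_3$.

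The paper instead inducts on $d_2$. Starting from $K_1$, one peels a single $\JM_2=\FT_2$ via (d) to obtain
\[
\beta_{d_1,d_2}\ \leadsto\ t^{-1}\,K_2\cdot\FT_3^{\,d_1}\ +\ qt^{-1}\,\beta_{d_1,d_2-1},
\]
where the remaining $\FT_2$'s on the first two strands have been absorbed into $K_2$ by rule (b). The piece $\alpha_{d_1}:=K_2\cdot\FT_3^{\,d_1}$ is then resolved by its own one-variable recursion: write $\FT_3=\JM_2\cdot\JM_3$, absorb $\JM_2$ into $K_2$, and apply (d) with $n=2$ to get $\alpha_{d_1}\leadsto t^{-2}K_3+qt^{-2}\alpha_{d_1-1}$, which terminates in closed $K_3$ and closed $K_2$, both explicit rational functions. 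The base of the outer recursion is $d_2=d_1$, i.e.\ the $(3,3d_1)$ torus link, known parity by \cite{torus links}. If you try to rescue an induction on $d_1$, the first step is still to create $K_2$ from $K_1$ before rule (d) grants access to $\JM_3$---and once you have done that, you are carrying out exactly the paper's argument.
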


\begin{proof}
To make the calculation simpler, we translate the braid pictures into equations, so for example, (d) translates to
$$K_n \cdot \JM_{n+1} = t^{-n}K_{n+1} + qt^{-n} K_n,$$
where $\JM_i$ is the Jucys-Murphy braid on $i$ strands. In general when we write $K_i, \JM_i,$ or $\FT_i$ (for a full twist), it is always on the first $i$ strands, and multiplication represents vertical stacking from bottom to top (or equivalently a tensor product of Soergel bimodules).

We can resolve the first $\FT_2$ by the process:
$$\FT_2 = K_1 \cdot \FT_2 = K_1 \cdot \JM_2 = t^{-1}K_2 + qt^{-1}K_1.$$
Since $K_2$ absorbs any crossings on the first two strands, this leaves us with 
$$ 
\beta_{d_1,d_2} = t^{-1}K_2 \cdot (\FT_3)^{d_1} + qt^{-1}\beta_{d_1, d_2-1}.
$$ 
Now we resolve $\alpha_{d_1} = K_2 \cdot (\FT_3)^{d_1}$ by first writing $\FT_3$ as the product $\JM_2 \cdot \JM_3$. The $\JM_2$ gets absorbed by $K_2$, so we get
$$K_2 \cdot (\FT_3)^{d_1} = K_2 \cdot \JM_3 \cdot (\FT_3)^{d_1-1} = t^{-2}K_3 + qt^{-2} K_2 (\FT_3)^{d_1-1},$$
where the $K_3$ has absorbed the rest of the $\FT_3$'s. When we close up $K_3$, we get $t^3$, and $\alpha_{0} = K_2$ closes up to $\frac{t}{1-q}$. So we get a simple recursion:
$$
\HHH^{a=0}(\alpha_{d_1}) = t + qt^{-2} \HHH^{a=0}(\alpha_{d_1-1}),
$$ 
with $\HHH^{a=0}(\alpha_0) = \frac{t}{1-q}$. We can write this in a closed form  as 
$$\HHH^{a=0}(\alpha_{d_1}) = t \left[\frac{1 - (qt^{-2})^{d_1}}{1-qt^{-2}} + \frac{(qt^{-2})^{d_1}}{q-1} \right].$$

So overall we have the recursive relation  
\begin{equation}
\label{eq: recursion}
\HHH^{a=0}(\beta_{d_1,d_2}) = \left[\frac{1 - (qt^{-2})^{d_1}}{1-qt^{-2}} + \frac{(qt^{-2})^{d_1}}{q-1} \right] + qt^{-1}\HHH^{a=0}(\beta_{d_1, d_2-1}),
\end{equation}
with $\beta_{d_1,0} = (FT_3)^{d_1} = T(3d_1, 3)$, which is parity (with known homology) by \cite{torus links}. 
\end{proof}

To compare this to $H_*(\Sp_{\gamma})$, it can be checked by direct computation that the Hilbert series $H(d_1, d_2)$ of $\Sp_{\gamma}$ satisfies essentially the same recursion \eqref{eq: recursion}. But we can also apply a theorem of Gorsky and Hogancamp (Proposition 5.5 in \cite{GH}). Here $\HY$ is the y-ified Khovanov Rozansky homology defined in \cite{GH}.

\begin{thm}[\cite{GH}]
\label{thm: HY(beta)}
    Assume that $\beta = JM_1^{d_n} \ldots JM_n^{d_1},\ d_n\ge d_{n-1}\ge \cdots \ge d_1$, and $\HHH^{a=0}(\beta)$ is parity. Then:

    \begin{enumerate}
        \item $\HY^{a=0}(\beta) = \HHH^{a=0}(\beta) \otimes \C[y_1, \dots, y_n]$ and $\HHH^{a=0}(\beta) = \HY^{a=0}(\beta)/(y)$
        \item $I(d_1, \ldots, d_n) \subseteq \HY^{a=0}(\beta) \subseteq J'(d_1,\ldots,d_n)$, where $I$ is the product
        $$I(d_1, \ldots, d_n) = J'(1, \ldots, 1)^{d_1} \cdot J'(0, 1, \ldots, 1)^{d_2-d_1} \cdot \ldots \cdot J'(0, \ldots, 0, 1)^{d_{n-1} - d_{n-2}}.$$
    \end{enumerate}
\end{thm}

In our case $\beta$ can be expressed exactly as above, and Theorem \ref{thm: parity} along with Corollary \ref{cor: J111} implies that for $n=3$:
$$\HY^{a=0}(L_{\gamma}) = J'(d_1,d_2).$$ Note the analogy between the relationship of $\HY$ to $\HHH$ in statement (1) of Theorem \ref{thm: HY(beta)}, and the relationship of $H_*^T(\Sp_{\gamma})$ to $H_*(\Sp_{\gamma})$ due to Fact \ref{fact: mod y}. The result is the following weaker version of Conjecture \ref{conj: ORS}.

\begin{thm}
    \label{thm: ORS}
    For $n=3$ and $\gamma$ as in $\ref{gamma def}$,
    $$\HY^{a=0}(L_{\gamma})\otimes_{\C[\mathbf{x}]}\C[\mathbf{x},\mathbf{x}^{\pm}]=\Delta H_*^T(\Sp_{\gamma})$$
and $$\HHH^{a=0}(L_{\gamma})\otimes_{\C[\mathbf{x}]}\C[\mathbf{x},\mathbf{x}^{\pm}] = H_*(\Sp_{\gamma}).$$
\end{thm}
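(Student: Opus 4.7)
The plan is to assemble the ingredients already developed in this section. The braid $\beta_{d_1,d_2} = \JM_2^{d_2}\JM_3^{d_1}$ is in the Jucys-Murphy form required by Proposition 5.5 of \cite{GH} (the trivial factor $\JM_1^{d_3}$ can be inserted for free, and the assumption $d_1\le d_2$ supplies the required monotonicity of exponents). Theorem \ref{thm: parity} ensures that $\HHH^{a=0}(\beta_{d_1,d_2})$ is parity, so the hypotheses of the Gorsky-Hogancamp theorem are satisfied, giving $\HY^{a=0}(L_{\gamma}) = J'(d_1,d_2)$ and $\HHH^{a=0}(L_{\gamma}) = \HY^{a=0}(L_{\gamma})/(\mathbf{y})$, where the variables $y_i$ in \cite{GH} play the role of our equivariant parameters $t_i$.

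For the first equality of Theorem \ref{thm: ORS} I would simply localize in $\mathbf{x} = x_1 x_2 x_3$. On the link side this turns $J'(d_1,d_2)$ into $\mathcal{J}$, since by construction $\mathcal{J} = J'\otimes_{\C[\mathbf{x}]} \C[\mathbf{x},\mathbf{x}^{\pm}]$. On the affine Springer fiber side, Theorem \ref{thm: J} identifies $\mathcal{J}$ with $\Delta H_*^T(\Sp_\gamma)$, so the two sides agree.

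For the second equality I would use the equivariant formality of $\Sp_\gamma$, known for $n=3$ by \cite{n=3}. Equivariant formality makes $H_*^T(\Sp_\gamma)$ free over $H_T^*(\mathrm{pt})=\C[\mathbf{t}]$, so multiplication by $\Delta$ is injective; consequently, the isomorphism $\Delta H_*^T(\Sp_\gamma)\cong \mathcal{J}$ of Theorem \ref{thm: J} lifts to an abstract $R$-module isomorphism $H_*^T(\Sp_\gamma)\cong \mathcal{J}$, forgetting the grading shift by $\deg\Delta$. Reducing modulo the ideal $(\mathbf{t})$ then identifies $H_*(\Sp_\gamma)=H_*^T(\Sp_\gamma)\otimes_{\C[\mathbf{t}]}\C$ with $\mathcal{J}/(\mathbf{t})\mathcal{J}$. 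On the link side, tensoring $\HHH^{a=0}(L_\gamma) = J'(d_1,d_2)/(\mathbf{y})$ with $\C[\mathbf{x},\mathbf{x}^{\pm}]$ over $\C[\mathbf{x}]$ yields $\mathcal{J}/(\mathbf{y})\mathcal{J}$, and under $\mathbf{y}\leftrightarrow\mathbf{t}$ this matches $H_*(\Sp_\gamma)$.

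The main subtlety, and the only point where the argument is not immediate from the preceding results, is the second equality: naively the factor $\Delta$ becomes zero modulo $(\mathbf{t})$, so one cannot directly descend the identification $\Delta H_*^T(\Sp_\gamma)\cong \mathcal{J}$ to non-equivariant homology. The fix is precisely equivariant formality: injectivity of multiplication by $\Delta$ on $H_*^T(\Sp_\gamma)$ lets us bypass the $\Delta$ factor at the level of abstract $R$-modules before taking the non-equivariant quotient.
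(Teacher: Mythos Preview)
Your proposal is correct and follows essentially the same approach as the paper: apply Theorem~\ref{thm: parity} to verify the parity hypothesis, invoke the Gorsky--Hogancamp theorem to identify $\HY^{a=0}(L_\gamma)$ with $J'(d_1,d_2)$, localize in $\mathbf{x}$ to pass to $\mathcal{J}$, and compare with Theorem~\ref{thm: J}. In fact you spell out more than the paper does, since the paper simply states the theorem after establishing $\HY^{a=0}(L_\gamma)=J'(d_1,d_2)$ without explicitly working through the second equality; your treatment of the $\Delta$ subtlety via equivariant formality (injectivity of multiplication by $\Delta$ on the free $\C[\mathbf{t}]$-module $H_*^T(\Sp_\gamma)$, then reduction mod $(\mathbf{t})$) is exactly the right way to make that step precise.
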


In order to show Theorem \ref{thm: ORS}, we used Corollary \ref{cor: J111}, but one could instead show that the link splitting map defined in \cite{GH} is canonical for $\beta$ as above, analogous to Proposition 6.11 in \cite{GH}. This means that Theorem \ref{thm: ORS} could be generalized to higher $n$ solely by showing that $\beta$ is parity, such as with more explicit recursions. We are optimistic that this can be done for $n=4$ and possibly for higher $n$.
\begin{appendix}

\section{Generalized \texorpdfstring{$(q,t)$}{(q,t)}-Catalan Numbers and the Fundamental Domain for \texorpdfstring{$n=3$}{n=3}\newline by Eugene Gorsky and Joshua P. Turner}

\maketitle

In \cite{Chen} Zongbin Chen introduced a notion of the fundamental domain for an unramified affine Springer fiber, which captures the behavior of cells in $\Sp_{\gamma}$ under translations by the lattice $\Lambda$.  More precisely,  for $n\le 4$ (and conjecturally in general)  $\Sp_{\gamma}$  admits a cell decomposition with cells parametrized by the lattice $\Lambda$. There is one torus fixed point in each cell. In general, the dimension of a cell is a complicated piecewise-linear function on $\Lambda$ which stabilizes outside of the fundamental domain $\CP$. The cells corresponding to points in $\Lambda$ outside $\CP$ can be obtained by translation of cells  corresponding to points in $\CP$.

At the same time, by Theorem \ref{thm: intro J} the (non-equivariant) homology of $\Sp_{\gamma}$ as a module over $\Lambda$ is captured by $\cJ/(y)\cJ$ as a module over $\mathbb{C}[x_1^{\pm},\ldots,x_n^{\pm}]$, so the cells in $\CP$ should correspond to the generators of $\cJ$, and (following Conjecture \ref{intro conj: Hilbert}) to the generalized $(q,t)$-Catalan numbers $F(d_n,\ldots,d_1)$. 

In this appendix we explore the definition and some general properties of  $\CP$ and establish its precise relation with the generalized $(q,t)$-Catalan numbers for $n=3$. We hope to generalize this to higher $n$ in future work. 

\subsection{The Fundamental Domain}

We define the action of $S_n$ on $\mathbb{R}^n$ by $\sigma(x_1,\ldots,x_n)=(x_{\sigma(1)},\ldots,x_{\sigma(n)})$. Note that for the basis vectors $\ee_i$ we have $\sigma e_i=e_{\sigma^{-1}(i)}.$

We start with the matrix $\gamma=\mathrm{diag}(\gamma_1,\ldots,\gamma_n)$ as in \eqref{gamma def}, where $\gamma_i$ are pairwise distinct monomials with order $d_i$. We will always assume that $d_1\le \ldots\le d_n$. Define $d_{ij}=\min(d_i,d_j)$ for $i\neq j$, noting that $d_{ij}$ is the order of $\gamma_i-\gamma_j$.

\begin{defn}
(Compare with \cite[Proposition 2.8]{Chen})
We define the polytope $\CP(d_1,\ldots,d_n)$  as the convex hull of the points $p_{\sigma}=\sigma(b_{1,\sigma},\ldots,b_{n,\sigma})$ where:
$$
b_{i,\sigma}=\sum_{j<i}d_{\sigma^{-1}(i),\sigma^{-1}(j)},\ \sigma\in S_n.
$$
\end{defn}

\begin{example}
For $n=2$ we get two points $p_{e}=(0,d_{1})$ and $p_{(1\ 2)}=(d_{1},0)$, and $\CP$ is the segment connecting them.
\end{example}

\begin{example}
For $n=3$ we get 6 points shown in the following table
\begin{center}
\begin{tabular}{c|c|c}
$\sigma$ & $(b_{1,\sigma},b_{2,\sigma},b_{3,\sigma})$ & $p_{\sigma}$\\
\hline
$e$ & $(0,d_1,d_1+d_2)$ & $(0,d_1,d_1+d_2)$\\
\hline
$(1\ 2)$ & $(0, d_1,  d_1+d_2)$ & $(d_1,0,d_1+d_2)$\\
\hline
$(1\ 3)$ & $(0, d_2, 2d_1)$ & $(2d_1, d_2, 0)$\\
\hline
$(2\ 3)$ & $(0, d_1, d_1+d_2)$ & $(0, d_1+d_2, d_1)$ \\
\hline
$(1\ 2\ 3)$ & $(0, d_1, d_1+d_2)$ & $(d_1, d_1+d_2, 0)$ \\
\hline
$(1\ 3\ 2)$ & $(0, d_2, 2d_1)$ & $(2d_1, 0, d_2)$\\
\hline
\end{tabular}
\end{center}
\begin{figure}[ht!]
\begin{tikzpicture}
\filldraw[color=lightgray] (3,0)--(4.5,2.598)--(2,6.928)--(-1,6.928)--(3,0);
\draw (0,0)--(3,0)--(4.5,2.598)--(2,6.928)--(-1,6.928)--(-2.5,4.33)--(0,0);
\draw (0,0) node {$\bullet$};
\draw (1,0) node {$\bullet$};
\draw (2,0) node {$\bullet$};
\draw (3,0) node {$\bullet$};

\draw (-0.5,0.866) node {$\bullet$};
\draw (0.5,0.866) node {$\bullet$};
\draw (1.5,0.866) node {$\bullet$};
\draw (2.5,0.866) node {$\bullet$};
\draw (3.5,0.866) node {$\bullet$};

\draw (-1,1.732) node {$\bullet$};
\draw (0,1.732) node {$\bullet$};
\draw (1,1.732) node {$\bullet$};
\draw (2,1.732) node {$\bullet$};
\draw (3,1.732) node {$\bullet$};
\draw (4,1.732) node {$\bullet$};

\draw (-1.5,2.598) node {$\bullet$};
\draw (-0.5,2.598) node {$\bullet$};
\draw (0.5,2.598) node {$\bullet$};
\draw (1.5,2.598) node {$\bullet$};
\draw (2.5,2.598) node {$\bullet$};
\draw (3.5,2.598) node {$\bullet$};
\draw (4.5,2.598) node {$\bullet$};

\draw (-2,3.464) node {$\bullet$};
\draw (-1,3.464) node {$\bullet$};
\draw (0,3.464) node {$\bullet$};
\draw (1,3.464) node {$\bullet$};
\draw (2,3.464) node {$\bullet$};
\draw (3,3.464) node {$\bullet$};
\draw (4,3.464) node {$\bullet$};

\draw (-2.5,4.33) node {$\bullet$};
\draw (-1.5,4.33) node {$\bullet$};
\draw (-0.5,4.33) node {$\bullet$};
\draw (0.5,4.33) node {$\bullet$};
\draw (1.5,4.33) node {$\bullet$};
\draw (2.5,4.33) node {$\bullet$};
\draw (3.5,4.33) node {$\bullet$};

\draw (-2,5.196) node {$\bullet$};
\draw (-1,5.196) node {$\bullet$};
\draw (0,5.196) node {$\bullet$};
\draw (1,5.196) node {$\bullet$};
\draw (2,5.196) node {$\bullet$};
\draw (3,5.196) node {$\bullet$};

\draw (-1.5,6.062) node {$\bullet$};
\draw (-0.5,6.062) node {$\bullet$};
\draw (0.5,6.062) node {$\bullet$};
\draw (1.5,6.062) node {$\bullet$};
\draw (2.5,6.062) node {$\bullet$};

\draw (-1,6.928) node {$\bullet$};
\draw (0,6.928) node {$\bullet$};
\draw (1,6.928) node {$\bullet$};
\draw (2,6.928) node {$\bullet$};
 
\draw (0,-0.3) node {\scriptsize $(0,d_1,d_1+d_2)$};
\draw (3,-0.3) node {\scriptsize $(d_1,0,d_1+d_2)$};
\draw (-3.6,4.33) node {\scriptsize $(0,d_1+d_2,d_1)$};
\draw (-2.1,6.928) node {\scriptsize $(d_1,d_1+d_2,0)$};
\draw (3.1,6.928) node {\scriptsize $(2d_1,d_2,0)$};
\draw (5.6,2.598) node {\scriptsize $(2d_1,0,d_2)$};
\end{tikzpicture}

\caption{Fundamental domain for $(d_1,d_2)=(3,5)$}
\label{fig: domain}
\end{figure}
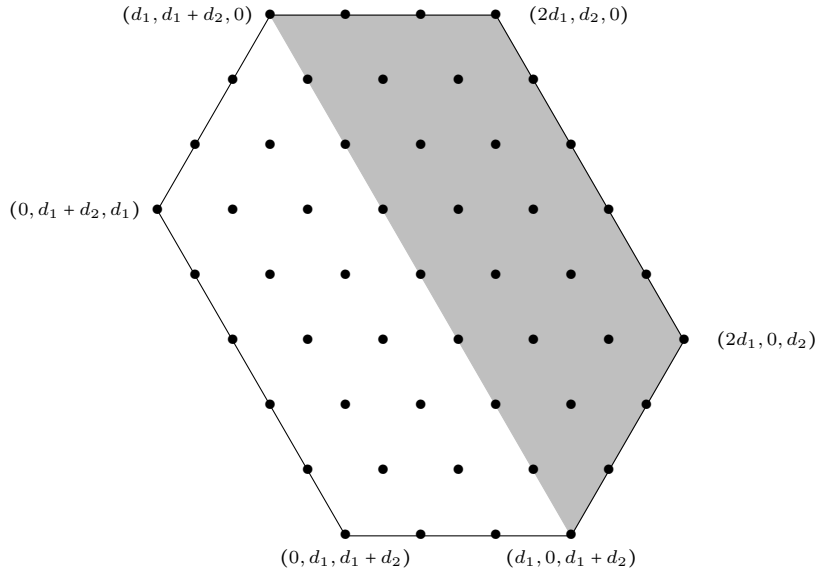
\end{example}

\begin{example}
Suppose that $d_i=d$ for all $i$. Then $b_{i,\sigma}=d(i-1)$,
$$
p_{\sigma}=(d(\sigma(1)-1),\ldots,d(\sigma(n)-1)),
$$ and $\CP$ is the standard $(n-1)$-dimensional permutahedron dilated $d$ times.
\end{example}

We now establish some general properties of $\CP$.

\begin{lemma}
The polytope $\CP(d_1,\ldots,d_n)$ is contained in the hyperplane $\sum x_i=\sum_{i<j}d_{ij}$.
\end{lemma}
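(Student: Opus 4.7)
The plan is to verify the statement at the vertices: since $\CP(d_1,\ldots,d_n)$ is by definition the convex hull of the points $p_\sigma$, it suffices to show that $\sum_k (p_\sigma)_k = \sum_{i<j} d_{ij}$ for every $\sigma\in S_n$, because the linear function $x\mapsto \sum_k x_k$ is constant on the convex hull iff it is constant on the vertices.

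The key observation is that $\sum_k (p_\sigma)_k = \sum_i b_{i,\sigma}$, since applying $\sigma$ to a tuple only permutes its entries and therefore preserves the sum. So I would expand
$$\sum_{i=1}^n b_{i,\sigma} \;=\; \sum_{i=1}^n \sum_{j<i} d_{\sigma^{-1}(i),\sigma^{-1}(j)}$$
and then reindex by setting $k=\sigma^{-1}(i)$, $\ell=\sigma^{-1}(j)$. The condition $j<i$ becomes $\sigma(\ell)<\sigma(k)$, so the sum becomes
$$\sum_{\substack{(k,\ell)\\ \sigma(\ell)<\sigma(k)}} d_{k,\ell}.$$
Since $d_{k,\ell}=d_{\ell,k}$ and for each unordered pair $\{k,\ell\}$ with $k\neq \ell$ exactly one of $\sigma(\ell)<\sigma(k)$ or $\sigma(k)<\sigma(\ell)$ holds, the sum collapses to $\sum_{k<\ell} d_{k,\ell}$, independent of $\sigma$.

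There is essentially no obstacle here: the statement is a bookkeeping fact about symmetric pairings under a permutation. The only thing to get right is the substitution argument and the reminder that symmetry $d_{k\ell}=d_{\ell k}$ turns a sum over the "$\sigma$-inversion" half of the ordered pairs into a sum over the unordered pairs. With the vertices verified, the containment $\CP(d_1,\ldots,d_n)\subseteq\{x:\sum x_i=\sum_{i<j}d_{ij}\}$ follows immediately from convexity.
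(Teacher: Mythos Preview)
Your proof is correct and follows the same approach as the paper's: the paper's one-line proof simply writes $\sum_i b_{i,\sigma}=\sum_{j<i}d_{\sigma^{-1}(i),\sigma^{-1}(j)}=\sum_{i<j}d_{ij}$, which is exactly your reindexing argument together with the symmetry of $d_{ij}$. You have just spelled out the convex-hull reduction and the substitution more carefully.
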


\begin{proof}
For any $\sigma$ we have $\sum_{i} b_{i,\sigma}=\sum_{j<i} d_{\sigma^{-1}(i),\sigma^{-1}(j)}=\sum_{i<j}d_{ij}.$
\end{proof}

\begin{prop}
Let $\ee_i$ denote the $i$-th basis vector. Then $\CP(d_1,\ldots,d_n)$ is the Minkowski sum of $\binom{n}{2}$ segments connecting $d_{ij}\ee_i$ and $d_{ij}\ee_j$.
\end{prop}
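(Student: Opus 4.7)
The strategy is to identify $\CP(d_1,\ldots,d_n)$ with the zonotope $M := \sum_{i<j}[d_{ij}\ee_i, d_{ij}\ee_j]$ by matching their vertex sets. Since $\CP$ is defined as $\mathrm{conv}\{p_\sigma : \sigma\in S_n\}$, it suffices to show that the set of vertices of $M$ equals $\{p_\sigma\}$.

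First I would unpack $p_\sigma$ coordinate-by-coordinate. By the action convention $\sigma(x_1,\ldots,x_n)=(x_{\sigma(1)},\ldots,x_{\sigma(n)})$, the $k$-th coordinate of $p_\sigma$ equals $b_{\sigma(k),\sigma}$; substituting $\ell=\sigma^{-1}(j)$ in the definition of $b_{i,\sigma}$ yields
$$(p_\sigma)_k \;=\; \sum_{j<\sigma(k)} d_{k,\sigma^{-1}(j)} \;=\; \sum_{\ell \,:\, \sigma(\ell)<\sigma(k)} d_{k,\ell}.$$
Next I would verify $p_\sigma\in M$ by explicitly describing the endpoint choice: for each pair $i<j$, pick $d_{ij}\ee_i$ if $\sigma(i)>\sigma(j)$, and $d_{ij}\ee_j$ otherwise. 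The $k$-th coordinate of the resulting sum counts precisely $d_{k,\ell}$ over $\ell$ with $\sigma(\ell)<\sigma(k)$, matching the formula above. This same choice is the unique maximizer on every nondegenerate segment of the linear functional $f_\sigma(x) = \sum_k \sigma(k) x_k$, so $p_\sigma$ is the unique maximum of $f_\sigma$ on $M$ and hence a vertex of $M$.

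For the converse, every vertex $v$ of the zonotope $M$ is the unique maximum of some linear functional $f=\sum c_k x_k$; after a small perturbation that preserves the argmax on every nondegenerate segment, we may assume the $c_k$ are pairwise distinct. The ranking $\sigma(k)=\#\{\ell : c_\ell\le c_k\}$ defines a permutation, and by the argument of the previous paragraph the unique $f$-maximum on $M$ equals $p_\sigma$. Combining the two directions, the vertices of $M$ are exactly the $p_\sigma$'s (possibly with coincidences when some $d_{ij}=0$), so $M=\mathrm{conv}\{p_\sigma\}=\CP(d_1,\ldots,d_n)$.

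The main obstacle is purely bookkeeping: first pinning down $(p_\sigma)_k$ under the $S_n$-action convention, and then matching this to the endpoint-choice description of a vertex of the zonotope. Once these combinatorics line up, the identification is the standard permutahedron-style calculation, and degenerate segments $d_{ij}=0$ simply collapse distinct permutations to the same vertex without affecting the convex hull.
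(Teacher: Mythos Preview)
Your proof is correct and follows essentially the same strategy as the paper: exhibit each $p_\sigma$ as an explicit sum of segment endpoints to get $\CP\subseteq M$, then identify the vertex set of the zonotope $M$ with $\{p_\sigma\}$ to get $M\subseteq\CP$. The only difference is that the paper cites a reference (Beck--Robins) for the vertex structure of the zonotope, whereas you supply the direct linear-functional argument yourself; your version is a bit more self-contained but otherwise the same proof.
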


\begin{proof}
Let $\CP'(d_1,\ldots,d_n)$ denote the above Minkowski sum.
We can write
$$
p_{\sigma}=\sigma\left(\sum_{i<j}d_{\sigma^{-1}(i),\sigma^{-1}(j)}\ee_i\right)=
\sum_{i<j}d_{\sigma^{-1}(i),\sigma^{-1}(j)}\ee_{\sigma^{-1}(i)}=\sum_{\sigma(i)<\sigma(j)}d_{ij}\ee_{i}.
$$
Given a permutation $\sigma$ and $i<j$, we can choose one end of the segment connecting $d_{ij}\ee_i$ and $d_{ij}\ee_j$ as follows: if $\sigma(i)<\sigma(j)$, we choose $\ee_i$, otherwise we choose $\ee_j$. Clearly, the sum of these points equals $p_{\sigma}$, so $p_{\sigma}\in \CP'(d_1,\ldots,d_n)$. Therefore $\CP(d_1,\ldots,d_n)\subseteq \CP'(d_1,\ldots,d_n)$.

On the other hand, $\CP'(d_1,\ldots,d_n)$ is a zonotope with edges parallel to the edges of the standard permutahedron $\CP(1,\ldots,1)$. It follows e.g. from \cite[Section 9]{BR} that the vertices of $\CP'(d_1,\ldots,d_n)$ are in bijection with the vertices of  $\CP(1,\ldots,1)$, and are given by $p_{\sigma}$. So $\CP'(d_1,\ldots,d_n)\subset \CP(d_1,\ldots,d_n)$.
\end{proof}

\begin{example}
For $n=3$ we get three segments $[(d_1,0,0),(0,d_1,0)]$, $[(d_1,0,0),(0,0,d_1)]$ and $[(0,d_2,0),(0,0,d_2)]$.
\end{example}

\begin{rem}
Quite surprisingly, a similar polytope appeared in a recent work of Alishahi, Liu and the first author on Heegaard Floer homology \cite{AGL}.
\end{rem}

\subsection{Generalized \texorpdfstring{$(q,t)$}{(q,t)}-Catalans for \texorpdfstring{$n=3$}{n=3}.}

Given $d_1\le d_2$, we can consider the Young diagram $\lambda_{d_1,d_2}=(d_1+d_2,d_1)$. We will draw Young diagrams in French notation, with the corner at $(0,0)$, see Figure \ref{fig: triangular}. We also consider the line 
$$
\ell_{d_1,d_2}=\{x+d_2y=d_1+2d_2+\varepsilon\}
$$
where $\varepsilon$ is a small positive number. The following lemma  shows that $\lambda_{d_1,d_2}$ is a triangular partition in the sense of \cite{BM}.

\begin{lemma}
The diagram $\lambda_{d_1,d_2}$ is the largest Young diagram below the line $\ell_{d_1,d_2}$. If a diagram $\mu$ is strictly below $\ell_{d_1,d_2}$, then $\mu\subset \lambda_{d_1,d_2}$.
\end{lemma}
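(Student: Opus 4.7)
The plan is a direct verification once the condition ``strictly below $\ell_{d_1,d_2}$'' is translated into arithmetic. First I would observe that the unit box with lower-left corner $(x,y)$ (for $x,y\in\Z_{\ge 0}$) attains the maximum of $x+d_2 y$ on its upper-right corner $(x+1,y+1)$, so it lies strictly below $\ell_{d_1,d_2}=\{x+d_2y=d_1+2d_2+\varepsilon\}$ precisely when $(x+1)+d_2(y+1)<d_1+2d_2+\varepsilon$. Since $\varepsilon$ is small and $x,y$ are integers, this is equivalent to the clean condition
\[
x+d_2 y \le d_1+d_2-1.
\]

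Next I would verify that every box of $\lambda_{d_1,d_2}=(d_1+d_2,d_1)$ satisfies this inequality. The rows of $\lambda_{d_1,d_2}$ only occupy $y\in\{0,1\}$: for $y=0$ we have $x\le d_1+d_2-1$, and for $y=1$ we have $x\le d_1-1$, so that $x+d_2\le d_1+d_2-1$. This shows $\lambda_{d_1,d_2}$ sits strictly below $\ell_{d_1,d_2}$.

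For maximality, suppose $\mu$ is any Young diagram strictly below $\ell_{d_1,d_2}$. If $\mu$ contains a box in some row $y\ge 2$, then by the Young diagram property it contains $(0,2)$, which forces $2d_2\le d_1+d_2-1$, i.e.\ $d_2\le d_1-1$, contradicting the standing assumption $d_1\le d_2$. So $\mu$ has at most two rows. The row-$0$ constraint bounds the first part of $\mu$ by $d_1+d_2$, and the row-$1$ constraint $x+d_2\le d_1+d_2-1$ bounds the second part by $d_1$. Hence $\mu\subseteq(d_1+d_2,d_1)=\lambda_{d_1,d_2}$.

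The only real subtlety is the convention for ``below the line'': one needs the upper-right corner of each box to lie strictly below $\ell$, which is exactly why the small shift $\varepsilon>0$ in the definition of $\ell_{d_1,d_2}$ is included. Once this is set up the assumption $d_1\le d_2$ immediately rules out a third row, and the per-row bounds are automatic, so there is no real obstacle to the argument.
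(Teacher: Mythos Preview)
Your proof is correct and follows essentially the same approach as the paper: a direct row-by-row verification that the inequality $x+d_2y<d_1+2d_2+\varepsilon$ (applied to upper-right corners of boxes) gives exactly the parts $d_1+d_2$ and $d_1$, and that $d_1\le d_2$ forbids a third row. The only cosmetic difference is that the paper indexes boxes by their upper-right corners $(x,y)$ with $y\ge 1$, whereas you index by lower-left corners and shift the inequality accordingly; the content is identical.
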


\begin{proof}
Let us describe all integer points $(x,y)$ satisfying $x+d_2y< d_1+2d_2+\varepsilon$. For $y=1$ we get $x<d_1+d_2+\varepsilon$, so $x\le d_1+d_2$. For $y=2$ we get $x<d_1+\varepsilon$, so $x\le d_1$. For $y\ge 3$ we get $x<d_1+(2-y)d_2+\varepsilon<0$, so there are no integer points (here we used $d_1\le d_2$). The result follows.
\end{proof}

\begin{figure}[ht!]
\begin{tikzpicture}
\draw [->] (0,0)--(0,4);
\draw [->] (0,0)--(14,0);
\draw (0,0)--(8,0);
\draw (0,1)--(8,1);
\draw (0,2)--(3,2);
\draw (0,0)--(0,2);
\draw (1,0)--(1,2);
\draw (2,0)--(2,2);
\draw (3,0)--(3,2);
\draw (4,0)--(4,1);
\draw (5,0)--(5,1);
\draw (6,0)--(6,1);
\draw (7,0)--(7,1);
\draw (8,0)--(8,1);
\draw (0,2.64)--(13.2,0);
\end{tikzpicture}
\caption{The line $\ell_{d_1,d_2}$ and the diagram $\lambda_{d_1,d_2}$ for $(d_1,d_2)=(3,5)$.}
\label{fig: triangular}
\end{figure}
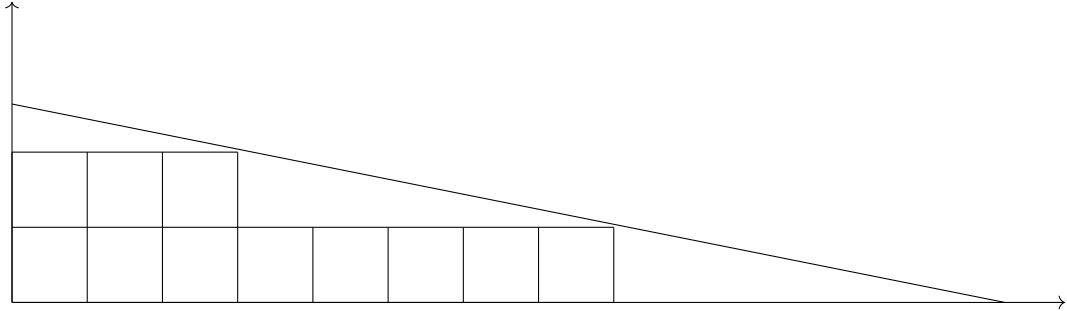

Following \cite{BM}, we define two statistics on subdiagrams of $\lambda_{d_1,d_2}$.

\begin{defn}
Given $\mu\subset \lambda_{d_1,d_2}$, we define $\area(\mu)=|\lambda_{d_1,d_2}|-|\mu|$
and
$$
\dinv(\mu)=\left\{\sq\in \mu:\frac{a(\sq)}{\ell(\sq)+1}\le d_2< \frac{a(\sq)+1}{\ell(\sq)}\right\}
$$
Here $a(\sq)$ and $\ell(\sq)$ are respectively the arm and the leg of a box $\sq$ in $\mu$.
\end{defn}
 
Note that $d_2$ in the definition of $\dinv$ is negative reciprocal to the slope of the line $\ell_{d_1,d_2}$.

\begin{thm}
\label{thm: phi}
The map $\phi:\mu\mapsto (\area(\mu),\dinv(\mu))$ yields a bijection between the subdiagrams $\mu\subset \lambda_{d_1,d_2}$ and the integer points in the trapezoid 
\begin{equation}
\label{eq: trapezoid}
\{d_1+d_2\le x+y\le 2d_1+d_2,\ 2x+y\le 2d_1+d_2,\ x+2y\le 2d_1+d_2\}.
\end{equation}
As a consequence, we get the generalized $(q,t)$-Catalan number
\begin{equation}
\label{eq: qtcat}
\sum_{\mu\subset \lambda_{d_1,d_2}}q^{\area(\mu)}t^{\dinv(\mu)}=F(d_1,d_2).
\end{equation}
\end{thm}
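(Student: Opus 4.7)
The plan is to prove Theorem \ref{thm: phi} by explicit case analysis on the shape of $\mu = (\mu_1,\mu_2) \subset \lambda_{d_1,d_2}$. Because $d_1 \le d_2$, any such subdiagram satisfies $\mu_1 \le d_1 + d_2 \le 2d_2$, and this bound reduces the computation of $\dinv(\mu)$ to three regimes: Case (A) $\mu_1 \le d_2$; Case (B1) $\mu_1 > d_2$ and $\mu_1 - \mu_2 \le d_2$; Case (B2) $\mu_1 > d_2$ and $\mu_1 - \mu_2 > d_2$. By enumerating the boxes of $\mu$ in each row and checking the inequality $\frac{a(\sq)}{\ell(\sq)+1} \le d_2 < \frac{a(\sq)+1}{\ell(\sq)}$, I would obtain the clean closed forms $\dinv_A(\mu) = \mu_1$, $\dinv_{B_1}(\mu) = 2\mu_1 - d_2$, and $\dinv_{B_2}(\mu) = 2\mu_2 + d_2 + 1$.

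Next, introduce the diagonal index $j := \mu_1 + \mu_2 - \dinv(\mu)$, which equals $2d_1+d_2 - \area(\mu) - \dinv(\mu)$. Substituting the three formulas shows $j = \mu_2$ in Case A, $j = d_2 - (\mu_1-\mu_2)$ in Case B1, and $j = \mu_1 - \mu_2 - d_2 - 1$ in Case B2, and the constraints on $(\mu_1,\mu_2)$ force $0 \le j \le d_1$ in each case. For fixed $j$ I would then verify that Case A contributes $\dinv$-values $\{j, j+1, \ldots, d_2\}$, while Cases B1 and B2 contribute the values $\equiv d_2$ and $\equiv d_2+1 \pmod 2$ respectively, interleaving to fill the interval $\{d_2+1, d_2+2, \ldots, 2d_1+d_2-2j\}$. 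Together these cover $\{j, j+1, \ldots, 2d_1+d_2-2j\}$ with each value attained exactly once, which is precisely the set of $\dinv$-values on the $j$-th diagonal $x+y = 2d_1+d_2-j$ of the trapezoid (on this diagonal, the conditions $x+2y \ge 2d_1+d_2$ and $2x+y \ge 2d_1+d_2$ from Proposition \ref{prop: bijection} reduce to $y \ge j$ and $x \ge j$). This gives the desired bijection.

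The generating function identity \eqref{eq: qtcat} then follows by summing over diagonals:
$$
\sum_{\mu \subset \lambda_{d_1,d_2}} q^{\area(\mu)} t^{\dinv(\mu)} \;=\; \sum_{j=0}^{d_1} (qt)^j\,[2d_1+d_2-3j+1]_{q,t},
$$
which matches the formula for $F(d_1,d_2,d_3)$ used in the proof of Theorem \ref{thm: Hilbert}(b). The main technical obstacle is the arm-leg bookkeeping in Case B2, where the overhang contribution saturates at $d_2+1$ while every under box contributes; the bound $\mu_1 \le 2d_2$ (equivalent to $d_1 \le d_2$) is essential, since without it a fourth regime $\mu_1 > 2d_2$ would arise and sharply complicate the case analysis.
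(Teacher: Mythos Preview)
Your proposal is correct and follows essentially the same approach as the paper's proof: both parametrize $\mu$ by two integers, split into the same three cases (your A, B1, B2 correspond to the paper's (a), (b), (c) with $\mu_1=a+b$, $\mu_2=a$), compute the identical $\dinv$ formulas, and use the parity interleaving of B1/B2 to cover the triangle above $y=d_2$. The only difference is organizational---you verify bijectivity by slicing along diagonals $x+y=2d_1+d_2-j$ and matching the count from Proposition~\ref{prop: bijection}, while the paper explicitly inverts $\phi$ in each case and describes the image regions; both arguments are equivalent and your diagonal slicing connects a bit more directly to the generating-function identity.
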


Equation \eqref{eq: qtcat} is a special case of the main result of \cite{Blasiak}, but we give a more direct proof here generalizing \cite[Theorem 4.1]{GM2}.

\begin{proof}
Let us write $\mu=(a+b,a)$, then $0\le a\le d_1$ and $0\le b\le d_1+d_2-a$. We have the following cases (see Figure \ref{fig: phi}):

(a) If $a+b\le d_2$ then $\dinv(\mu)=a+b$, so $$\phi(\mu)=(2d_1+d_2-2a-b,a+b)=(x,y).$$ We have $a=2d_1+d_2-x-y, b=2y+x-2d_1-d_2$, so the inequalities $0\le a,a\le d_1,0\le b$ and $a+b\le d_2$ respectively translate to the inequalities $x+y\le 2d_1+d_2,x+y\ge d_1+d_2, 2y+x\ge 2d_1+d_2$ and $y\le d_2$. 

(b) If $a+b>d_2, b\le d_2$ then $\dinv(\mu)=2a+2b-d_2$, so $$\phi(\mu)=(2d_1+d_2-2a-b,2a+2b-d_2)=(x,y).$$ We have $y=d_2\mod 2$, and 
$$
a=\frac{4d_1+d_2-2x-y}{2},\ b=x+y-2d_1
$$
The inequalities $0\le a,a\le d_1,0\le b, b\le d_2$ and $a+b>d_2$ respectively translate to the inequalities
$$
2x+y\le 4d_1+d_2,\ 2x+y\ge 2d_1+d_2,\ x+y\ge 2d_1,\ x+y\le 2d_1+d_2,\ y>d_2.
$$
The second, fourth and fifth inequalities define a triangle $\mathbf{T}$ with vertices $(0,2d_1+d_2),(d_1,d_2)$ and $(2d_1,d_2)$ with the bottom side removed. The other two inequalities are satisfied on this triangle. In other words, in this case the image of $\phi$ is the set  of all integer points in the triangle $\mathbf{T}$ satisfying $y=d_2\mod 2$.

(c) If $b>d_2$ then $\dinv(\mu)=2a+d_2+1$, so $$\phi(\mu)=(2d_1+d_2-2a-b,2a+d_2+1)=(x,y).$$ We have $y=d_2+1\mod 2$, and
$$
a=\frac{y-d_2-1}{2},\ b=2d_1+2d_2+1-x-y.
$$
The inequalities $0\le a,a\le d_1,b>d_2$ and $a+b\le d_1+d_2$ respectively translate to the inequalities 
$$
y\ge d_2+1,\ y\le 2d_1+d_2+1,\ x+y<2d_1+d_2+1,\ 2x+y\ge 2d_1+d_2+1.
$$
Similarly to (b), the image of $\phi$ in this case is the set of all integer points in the triangle $\mathbf{T}$ 
satisfying $y=d_2+1\mod 2$.
\end{proof}

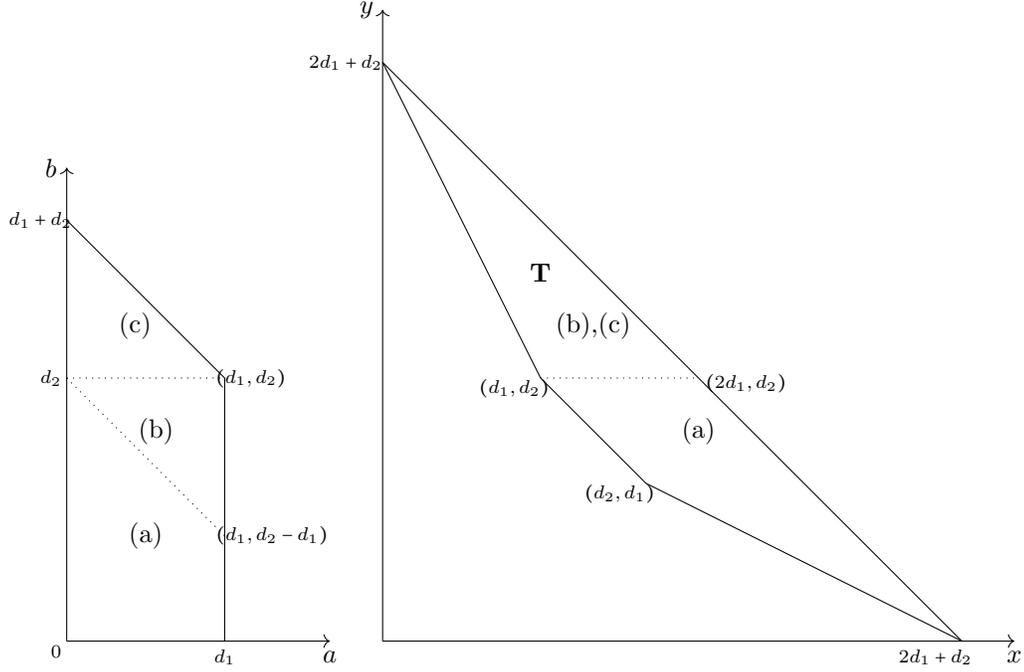
\begin{figure}
\begin{tikzpicture}[scale=0.7]
\draw[->] (0,0)--(0,9);
\draw[->] (0,0)--(5,0);
\draw (3,0)--(3,5)--(0,8);
\draw (5,-0.3) node {$a$};
\draw (-0.3,9) node {$b$};
\draw (-0.5,8) node {\scriptsize $d_1+d_2$};
\draw (3,-0.3) node {\scriptsize $d_1$};
\draw (3.5,5) node {\scriptsize $(d_1,d_2)$};
\draw (-0.3,5) node {\scriptsize $d_2$};
\draw (3.9,2) node {\scriptsize $(d_1,d_2-d_1)$};
\draw (-0.2,-0.2) node {\scriptsize $0$};
\draw [dotted] (0,5)--(3,5);
\draw [dotted] (0,5)--(3,2);
\draw (1.5,2) node {(a)};
\draw (1.7,4) node {(b)};
\draw (1.3,6) node {(c)};

\draw [->] (6,0)--(18,0);
\draw [->] (6,0)--(6,12);
\draw (6,11)--(17,0);
\draw (6,11)--(9,5)--(11,3)--(17,0);
\draw [dotted] (9,5)--(12,5);
\draw (5.7,12) node {$y$};
\draw (18,-0.3) node {$x$};
\draw (5.3,11) node {\scriptsize $2d_1+d_2$};
\draw (16.5,-0.3) node {\scriptsize $2d_1+d_2$};
\draw (8.5,4.8) node {\scriptsize $(d_1,d_2)$};
\draw (10.5,2.8) node {\scriptsize $(d_2,d_1)$};
\draw (12.9,4.9) node {\scriptsize $(2d_1,d_2)$};
\draw (12,4) node {(a)};
\draw (9,7) node {$\mathbf{T}$};
\draw (10,6) node {(b),(c)};
\end{tikzpicture}
\caption{The bijection $\phi$}
\label{fig: phi}
\end{figure}

Finally, we compare the above combinatorial results with the fundamental domain. Observe that for $n=3$ the fundamental domain $\CP(d_1,d_2)$ is a hexagon with an axis of symmetry, which cuts it into two equal halves (see Figure \ref{fig: domain}).

\begin{thm}
\label{thm: fund domain to gens}
The integer points in a half of $\CP(d_1,d_2)$ (including boundary) are in bijection with the generators of the ideal $\cJ(d_1,d_2)$.
\end{thm}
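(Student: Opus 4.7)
The plan is to identify an explicit reflection symmetry of $\CP(d_1,d_2)$, describe one of the resulting halves as a lattice quadrilateral, set up an elementary bijection between its integer points and Young subdiagrams of $\lambda_{d_1,d_2}$, and then combine with Theorem~\ref{thm: phi} and Proposition~\ref{prop: bijection}.

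First I would verify that the affine involution
\[
\rho(x_1,x_2,x_3)=(2d_1-x_1,\;x_1+x_2-d_1,\;x_1+x_3-d_1)
\]
preserves the hyperplane $\{x_1+x_2+x_3=2d_1+d_2\}$ and acts there as an isometry: its linear part swaps the directions $e_2-e_1$ and $-(e_3-e_1)$ while fixing the axis direction $e_3-e_2$ up to sign. A direct vertex-by-vertex computation using the formulas for $p_\sigma$ shows that $\rho$ swaps $p_e\leftrightarrow p_{(1\,3\,2)}$ and $p_{(2\,3)}\leftrightarrow p_{(1\,3)}$ and fixes the two axis-vertices $p_{(1\,2)}$ and $p_{(1\,2\,3)}$. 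Since $\rho$ is affine and permutes the vertex set of $\CP(d_1,d_2)$, it is a reflection symmetry of the hexagon whose fixed locus is the line $\{x_1=d_1\}$, and this axis cuts $\CP(d_1,d_2)$ into two equal halves.

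Next I would set $H=\{x_1\ge d_1\}\cap\CP(d_1,d_2)$; its integer points are precisely the integer triples satisfying $d_1\le x_1\le 2d_1$, $x_2,x_3\ge 0$, and $x_1+x_2+x_3=2d_1+d_2$. The map
\[
\Phi\colon(x_1,x_2,x_3)\longmapsto(\mu_1,\mu_2):=(x_1+x_2-d_1,\;x_1-d_1)
\]
translates the defining inequalities of $H$ term-by-term into the Young-diagram conditions $0\le\mu_2\le d_1$, $\mu_2\le\mu_1$, and $\mu_1\le d_1+d_2$; the explicit inverse $(\mu_1,\mu_2)\mapsto(d_1+\mu_2,\,\mu_1-\mu_2,\,d_1+d_2-\mu_1)$ exhibits $\Phi$ as a bijection between integer points of $H$ and subdiagrams $\mu\subset\lambda_{d_1,d_2}=(d_1+d_2,d_1)$.

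Finally, composing $\Phi$ with the bijection $\phi=(\area,\dinv)$ of Theorem~\ref{thm: phi} matches integer points of $H$ with integer points of the trapezoid~\eqref{eq: trapezoid}, and Proposition~\ref{prop: bijection} identifies those with the generators of $\cJ(d_1,d_2)$. The only step requiring insight is the first one: the hexagon has several symmetry axes (for example $\{x_2=x_3\}$ is also one), so pinning down the correct one is the crux. Once the axis $\{x_1=d_1\}$ is chosen—matching the shaded half of Figure~\ref{fig: domain}—the remaining content is a routine affine change of coordinates to partitions.
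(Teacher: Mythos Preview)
Your proposal is correct and follows the same three-step strategy as the paper: identify the relevant half of the hexagon, put it in bijection with subdiagrams $\mu\subset\lambda_{d_1,d_2}$ via an affine change of coordinates, and then compose with Theorem~\ref{thm: phi} and Proposition~\ref{prop: bijection}. Your coordinates even coincide with the paper's: writing points of the half as $(d_1,0,d_1+d_2)+a(1,0,-1)+b(0,1,-1)$, the paper sets $\mu=(a+b,a)$, which is exactly your $(\mu_1,\mu_2)=(x_1+x_2-d_1,\,x_1-d_1)$.

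The only real difference is that you do strictly more work than the paper: the paper simply asserts that the hexagon has an axis of symmetry (pointing to Figure~\ref{fig: domain}) and checks the coordinate bounds ``by comparing Figures~\ref{fig: domain} and~\ref{fig: phi}'', whereas you explicitly exhibit the reflection $\rho$, verify it permutes the vertices, identify its fixed axis as $\{x_1=d_1\}$, and then derive the inequalities for $H$ directly. Your added rigor is welcome, and your observation that the upper bounds $x_2,x_3\le d_1+d_2$ become redundant once $x_1\ge d_1$ is the clean way to see that $H$ is a genuine quadrilateral with the stated description.
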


\begin{proof}
We construct the desired bijection in several steps:

1) The integer points in a half of $\CP(d_1,d_2)$ are in bijection with the subdiagrams of $\lambda_{d_1,d_2}$. 
Indeed, we can write such points as $(d_1,0,d_1+d_2)+a(1,0,-1)+b(0,1,-1)$. It is easy to see by comparing Figures \ref{fig: domain} and \ref{fig: phi} that $0\le a\le d_1$ and $0\le b\le d_1+d_2-a$, and hence $(a,b)$ define a subdiagram $\mu=(a+b,a)$.

2) By Theorem \ref{thm: phi}, we have a bijection $\phi$ between the subdiagrams of $\lambda_{d_1,d_2}$ and the points in the trapezoid \eqref{eq: trapezoid}.

3) By Proposition \ref{prop: bijection}, there is a bijection between the generators of $\cJ(d_1,d_2)$ and the points in the trapezoid \eqref{eq: trapezoid}.
\end{proof}

We expect that the bijection in Theorem \ref{thm: fund domain to gens} is far more than a combinatorial coincidence. In particular, by tracing through the bijections we see that $\dinv$ defines a piecewise linear function on the fundamental domain $\CP(d_1,d_2)$, and we expect this function to be closely related to the dimension of cells in an appropriately chosen cell decomposition of $\Sp_{\gamma}$. The corresponding (equivariant) homology classes of the cells would then correspond to some elements of $\cJ(d_1,d_2)$, and we expect that these would indeed generate the ideal. We plan to study these questions and generalize them to $n>3$ in future work. 

\end{appendix}

\end{document}